\algnewcommand\algorithmicinput{\textbf{Input: }}
\algnewcommand\INPUT{\State\algorithmicinput}
\algnewcommand\algorithmicinitialize{\textbf{Initialize: }}
\algnewcommand\INIT{\State\algorithmicinitialize}
\algnewcommand\algorithmicrun{\textbf{Run: }}
\algnewcommand\RUN{\State\algorithmicrun}
\algnewcommand\algorithmicupdate{\textbf{Update: }}
\algnewcommand\UPDATE{\State\algorithmicupdate}
\algnewcommand\algorithmicset{\textbf{Set: }}
\algnewcommand\SET{\State\algorithmicset}
\algnewcommand\algorithmicquery{\textbf{Query: }}
\algnewcommand\QUERY{\State\algorithmicquery}
\algnewcommand\algorithmicoutput{\textbf{Output: }}
\algnewcommand\OUTPUT{\State\algorithmicoutput}
\newtheoremstyle{norm}
{12pt}
{12pt}
{}
{}
{\bf}
{:}
{.5em}
{}
\newtheorem{thm}{Theorem}[section]
\newtheorem*{thm*}{Theorem}
\newtheorem*{clm*}{Claim}
\newtheorem{conj}[thm]{Conjecture}
\newtheorem*{conj*}{Conjecture}
\newtheorem{cor}[thm]{Corollary}
\newtheorem{lem}[thm]{Lemma}
\newtheorem*{lem*}{Lemma}
\theoremstyle{norm}
\newtheorem{prb}[thm]{Problem}
\newtheorem*{prb*}{Problem}
\newtheorem{assm}[thm]{Assumption}
\newtheorem*{ax*}{Axiom}
\newtheorem{df}[thm]{Definition}
\newtheorem*{df*}{Definition}
\newtheorem*{ex*}{Example}
\newtheorem{expl}[thm]{Exploration}
\newtheorem*{pos*}{Postulate}
\newtheorem*{pr*}{Proposition}
\newtheorem*{qu*}{Question}
\newtheorem{rem}[thm]{Remark}
\newtheorem*{rem*}{Remark}
\tikzstyle{prbox} = [draw=black, fill=blue!20, very thick,
\tikzstyle{thbox} = [draw=black,double, fill=blue!10, very thick,
\tikzstyle{cpbox} = [drop shadow={
\tikzstyle{wrbox} = [drop shadow={
\tikzstyle{hnbox} = [draw=black, fill=white, very thick,
\newcommand{\A}[0]{\mathbb{A}}
\newcommand{\E}[0]{\mathbb{E}}
\newcommand{\sF}[0]{\mathscr{F}}
\newcommand{\N}[0]{\mathbb{N}}
\newcommand{\Pj}[0]{\mathbb{P}}
\newcommand{\R}[0]{\mathbb{R}}
\newcommand{\T}[0]{\mathbb{T}}
\newcommand{\one}[0]{\mathbbm{1}}
\newcommand{\al}[0]{\alpha}
\newcommand{\be}[0]{\beta}
\newcommand{\ga}[0]{\gamma}
\newcommand{\de}[0]{\delta}
\newcommand{\De}[0]{\Delta}
\newcommand{\ep}[0]{\varepsilon}
\newcommand{\te}[0]{\theta}
\newcommand{\Te}[0]{\Theta}
\newcommand{\om}[0]{\omega}
\newcommand{\Om}[0]{\Omega}
\newcommand{\si}[0]{\sigma}
\newcommand{\nin}[0]{\not\in}
\newcommand{\subeq}[0]{\subseteq}
\newcommand{\bs}[0]{\backslash}
\newcommand{\iy}[0]{\infty}
\newcommand{\rc}[1]{\frac{1}{#1}}
\newcommand{\prc}[1]{\pa{\rc{#1}}}
\newcommand{\fc}[2]{\frac{#1}{#2}}
\newcommand{\sfc}[2]{\sqrt{\frac{#1}{#2}}}
\newcommand{\pf}[2]{\pa{\frac{#1}{#2}}}
\newcommand{\dd}[2]{\frac{d #1}{d #2}}
\newcommand{\ab}[1]{\left| {#1} \right|}
\newcommand{\ba}[1]{\left[ {#1} \right]}
\newcommand{\bc}[1]{\left\{ {#1} \right\}}
\newcommand{\ce}[1]{\left\lceil {#1}\right\rceil}
\newcommand{\fl}[1]{\left\lfloor {#1}\right\rfloor}
\newcommand{\pa}[1]{\left( {#1} \right)}
\newcommand{\set}[2]{\left\{{#1}:{#2}\right\}}
\newcommand{\ol}[1]{\overline{#1}}
\newcommand{\ul}[1]{\underline{#1}}
\newcommand{\ub}[2]{\underbrace{#1}_{#2}}
\newcommand{\wt}[1]{\widetilde{#1}}
\newcommand{\wh}[1]{\widehat{#1}}
\newcommand{\KL}[0]{\operatorname{KL}}
\newcommand{\TV}[0]{\operatorname{TV}}
\newcommand{\Var}[0]{\operatorname{Var}}
\providecommand{\cal}[1]{\mathcal{#1}}
\renewcommand{\cal}[1]{\mathcal{#1}}
\newcommand{\pull}[9]{
#1\ar@/_/[ddr]_{#2} \ar@{.>}[rd]^{#3} \ar@/^/[rrd]^{#4} & &\\
& #5\ar[r]^{#6}\ar[d]^{#8} &#7\ar[d]^{#9} \\}
\newcommand{\cmp}[9]{
\xymatrix{
#1 \ar[r]^{#4}{#5} \ar@/_2pc/[rr]^{#8}_{#9} & #2 \ar[r]^{#6}_{#7} & #3
}
}
\newcommand{\ha}[1]{\ar@{^(->}[#1]}
\newcommand{\ls}[1]{\ar@{-}[#1]}
\newcommand{\sj}[1]{\ar@{->>}[#1]}
\newcommand{\aq}[1]{\ar@{=}[#1]}
\newcommand{\acir}[1]{\ar@{}[#1]|-{\textstyle{\circlearrowright}}}
\newcommand{\acil}[1]{\ar@{}[#1]|-{\textstyle{\circlearrowleft}}}
\newcommand{\ard}[1]{\ar@{.>}[#1]}
\newcommand{\mt}[1]{\ar@{|->}[#1]}
\newcommand{\inm}[1]{\ar@{}[#1]|-{\in}}
\newcommand{\inr}{\ar@{}[d]|-{\rotatebox[origin=c]{-90}{$\in$}}}
\newcommand{\inl}{\ar@{}[u]|-{\rotatebox[origin=c]{90}{$\in$}}}
\newcommand{\minr}[2]{\min_{\scriptsize \begin{array}{c}{#1}\\{#2}\end{array}}}
\newcommand{\sumo}[2]{\sum_{#1=1}^{#2}}
\newcommand{\sumz}[2]{\sum_{#1=0}^{#2}}
\newcommand{\prodo}[2]{\prod_{#1=1}^{#2}}
\newcommand{\beq}[1]{\begin{equation}\llabel{#1}}
\newcommand{\eeq}[0]{\end{equation}}
\newcommand{\bal}[0]{\begin{align*}}
\newcommand{\eal}[0]{\end{align*}}
\newcommand{\ban}[0]{\begin{align}}
\newcommand{\ean}[0]{\end{align}}
\newcommand{\fixme}[1]{{\color{red}#1}}
\newcommand{\llabel}[1]{\label{#1}\text{\fixme{\tiny#1}}}
\newcommand{\arxiv}[1]{\url{http://www.arxiv.org/abs/#1}}
\newcommand{\vocab}[1]{\textbf{#1}} 
\DeclareFontFamily{U}{wncy}{}
    \DeclareFontShape{U}{wncy}{m}{n}{<->wncyr10}{}
    \DeclareSymbolFont{mcy}{U}{wncy}{m}{n}
    \DeclareMathSymbol{\Sh}{\mathord}{mcy}{"58} 
\newcommand{\OPT}[0]{\mathrm{OPT}}
\newcommand{\amx}[0]{a_{\max}}
\newcommand{\hamx}[0]{\wh{a}_{\max}}
\newcommand{\bmin}[0]{\beta_{\min}}
\algnewcommand{\algorithmicforeach}{\textbf{for each}}
\newcommand{\Par}[0]{\mathsf{Par}}
\newcommand{\Anc}[0]{\mathsf{Anc}}
\newcommand{\Desc}[0]{\mathsf{Desc}}
\newcommand{\bfT}[0]{\mathbf{T}}
\newcommand{\PCREMn}[0]{\mathbb Q_{\beta,n}^a}
\newcommand{\PCREM}[0]{\mathbb P^a}
\newcommand{\Qatri}[0]{\mathbb Q^a_{\beta,n\to \iy}}
\newcommand{\Patri}[0]{\mathbb P^a_{n\to \iy}}
\newcommand{\Eatri}[0]{\mathbb E^a_{n\to \iy}}
\newcommand{\Pai}[0]{\mathbb P^a}
\newcommand{\Patr}[0]{\mathbb P_{n\to N}^{a}}
\newcommand{\Qatr}[0]{\mathbb Q_{\beta, n \to N}^{a}}
\newcommand{\Pa}[0]{\mathbb P^{a}_N}
\newcommand{\Ea}[0]{\mathbb E^{a}_N}
\newcommand{\Pan}[1]{\mathbb P^{a}_{#1}}
\newcommand{\Ean}[1]{\mathbb E^{a}_{#1}}
\newcommand{\mun}[1]{\mu_{\be, #1}}
\newcommand{\muv}[1]{\mu_{\be}^{#1}}
\newcommand{\Zn}[1]{Z_{\be, #1}}
\newcommand{\Zhatan}[1]{\wh Z^a_{\be, #1}}
\newcommand{\Zhatn}[1]{\wh Z_{\be, #1}}
\newcommand{\Zhatv}[1]{\wh Z_{\be}^{#1}}
\newcommand{\ZhatnN}[2]{\wh Z_{\be,#1\to #2}}
\newcommand{\ZhatanN}[2]{\wh Z_{\be,#1\to #2}^a}
\newcommand{\Zpn}[1]{Z'_{\be, #1}}
\newcommand{\Ztn}[1]{\wt Z_{\be, #1}}
\newcommand{\Ztnv}[2]{\wt Z_{\be, #1}^{#2}}
\newcommand{\ZtnN}[2]{\wt Z_{\be, #1\to #2}}
\newcommand{\pv}[1]{p_{\be, #1}}
\newcommand{\ptv}[1]{p_{\be, #1}'}
\newcommand{\munv}[2]{\mu_{\be, #1}^{#2}}
\newcommand{\Znv}[2]{Z_{\be, #1}^{#2}}
\newcommand{\Zhatnv}[2]{\wh Z_{\be, #1}^{#2}}
\newcommand{\Zoln}[1]{\overline Z_{\be, #1}}
\newcommand{\muNn}[1]{\mun{N}|_{#1}}
\newcommand{\nun}[1]{\nu_{\be, #1}}
\newcommand{\gap}[0]{\sqrt{\ln 2} - \beta \sfc{\amx}2}
\title{Sampling from the Continuous Random Energy Model in Total Variation Distance}
\author{
\begin{tabular}{c c}
  \begin{tabular}{c}
    Holden Lee\footnote{Department of Applied Mathematics and Statistics, Johns Hopkins University. Email: hlee283@jhu.edu. } 
  \end{tabular} \qquad 
  \begin{tabular}{c}
    Qiang Wu\footnote{School of Mathematics, University of Minnesota. Email: wuq@umn.edu.} 
  \end{tabular}
\end{tabular}}
\date{\today}
\begin{document}
\maketitle

\begin{abstract}
    The continuous random energy model (CREM) is a toy model of spin glasses on $\{0,1\}^N$ that, in the limit, exhibits an infinitely hierarchical correlation structure. 
    We give two polynomial-time algorithms to approximately sample from the Gibbs distribution of the CREM 
    in the high-temperature regime $\beta<\beta_{\min}:=\min\{\beta_c,\beta_G\}$, based on a Markov chain and a sequential sampler. 
    The running time depends algebraically on the desired TV distance and failure probability and exponentially in $(1/g)^{O(1)}$, where $g$ is the gap to a certain inverse temperature threshold $\beta_{\min}$; this contrasts with previous results which only attain $o(N)$ accuracy in KL divergence.
    If the covariance function $A$ of the CREM is concave, the algorithms work up to the critical threshold $\beta_c$, which is the static phase transition point; while for $A$ non-concave, if $\beta_G<\beta_c$, the algorithms work up to the known algorithmic threshold $\beta_G$ proposed in~\cite{addario2020algorithmic} for non-trivial sampling guarantees. 
    Our result depends on quantitative bounds for the fluctuation of the partition function and a new contiguity result of the ``tilted" CREM obtained from sampling, which is of independent interest.
    We also show that the spectral gap is exponentially small with high probability, suggesting that the algebraic dependence is unavoidable with a Markov chain approach.
\end{abstract}

\newpage

\tableofcontents

\newpage

\section{Introduction}

Spin glasses are models of disordered magnetic alloys in statistical physics, which have been extensively studied in the last 50 years. 
To understand the theoretical aspects of spin glasses, Derrida~\cite{Derr81} introduced a simplified spin glass model, the random energy model (REM), where the energy states for each spin configuration are independent random variables. Later, in order to capture more complex features of spin glasses, this toy model was generalized in~\cites{Derr85,derrida1988polymers,BK04, BK04b} by allowing 
hierarchical correlation structure among the energy states. One of these generalizations of REM is the continuous random energy model (CREM), where the energies have continuously many levels of hierarchical correlations. In this paper, we are interested in an algorithmic question for sampling from the CREM Gibbs measure. We first introduce the mathematical definition of the CREM.

\subsection{The continuous random energy model}

For a given positive integer $N$, let $\mathbb T_N:=\bigcup_{n=0}^N\{0,1\}^n$ denote the vertices of a binary tree of depth $N$. Denote the root by $\phi$.  
For $v\in \mathbb T_N$, let $|v|$ denote the length or depth of $v$, that is,  $|v|=n$ if $v\in \{0,1\}^n$. 
For $v, w\in \{0,1\}^n$, let $v\wedge w$ be the greatest common ancestor vertex, which is represented by the longest initial substring appearing in both $v$ and $w$. To help visualize these definitions, Figure~\ref{fig:crem-tree} depicts a particular example of the binary tree with $N=5$.


\begin{df}\label{d:crem}
Fix a positive integer $N$ and let $A:[0,1]\to \R_{\ge 0}$ be a non-decreasing function. We define the \vocab{continuous random energy model (CREM)} with covariance function $A(x)$ as follows. For convenience, we also use $a:[0,N]\to \R_{\ge 0}$ with $a(x) = N\cdot A\pf xN$ to denote the unnormalized covariance function, and define $\widehat{A}$ to be the concave hull of $A$.
\begin{itemize}
\item \ul{Underlying probability space.}
Let $\Omega_N:= \mathbb{R}^{\mathbb{T}_N}$ be the underlying probability space, and equip it with the product Gaussian measure $\Pa$ whose marginal distributions are 
\[\om_u\sim 
\begin{cases}
     \cal N(0, a(0)), &u=\phi,\\
     \cal N \pa{0, a(|u|) - a(|u|-1)}, & u\in \T_N \bs \{\phi\}.
\end{cases}
\] 
We call $\Pa$ the \vocab{CREM disorder measure}.
\item \ul{Random variables.} Define the collection of independent random variables $Y_u(\om) = \om_u$ for $u\in \T_N$.  
For $0\le n\le N$ and $v\in\{0,1\}^n$, define the energy $X_v$ of $v$ by 
\[
X_{v_1\cdots v_n} := \sum_{m=0}^n Y_{v_1\cdots v_m}.
\]
We also define the filtration $\sF_n := \si(\set{Y_u}{|u|\le n})$, i.e.~the $\si$-algebra generated by the random variables up to depth $n$.
\item \ul{Random measures.} 
For a measurable space $S$, let $\mathcal{P}(S)$ denote the space of probability measures on $S$. For $0\le n\le N$, define the distribution associated to the CREM at depth $n$ as the random probability measure $\mun{n}:\Omega_N \to \mathcal{P}(\{0,1\}^n)$ given by\footnote{Note that it would be more proper to write $\mun{n}(\om)(\{v\})$ as $\mun{n}$ is a measure only given a particular $\om\in \Om_N$; we will omit dependence on $\om$.}
    \begin{align*}
\mun{n} (v) &:= \rc{\Zn{n}} e^{\be X_v}
\text{ for each }v\in\{0,1\}^n \\
\text{where }
 \Zn{n} &:= \sum_{v\in \{0,1\}^n} e^{\be X_v}.
    \end{align*}
Above, $\beta>0$ is the inverse temperature parameter, and $\Zn{n}$ is the partition function. When $n =N$, then $\mun{N}$ is the standard \vocab{Gibbs measure} for the CREM; see Remark~\ref{rem:Gibb-def}. 
We write for short $\pv{v} = \mun{n}(v)$. Define the normalized partition function $\Zhatan{n} = \Zhatn{n} = \fc{\Zn{n}}{\Ea \Zn{n}}$, where $\Ea$ is the expectation under $\Pa$, and we omit $a$ when it is clear.
\end{itemize}
\end{df}
One can think of $Y_{v_1\cdots v_m}$ as the random variable assigned to the edge from $v_1\cdots v_{m-1}$ to $v_1\cdots v_m$, so that $X_u$ is the sum of the root label $Y_\phi$ and the random variables along the edges from the root to $u$. Note our definition is slightly more general than in the literature as we allow $A(0)>0$, i.e., $Y_\phi$ to be nontrivial.

\begin{rem}\label{rem:Gibb-def}
In \Cref{d:crem}, if $n=N$, then $\mun{N}$ reduces to the standard definition of  
    the \vocab{Gibbs measure} for the \vocab{continuous random energy model (CREM)} with covariance function $A(x)$. In the standard definition~\cite{BK04b}, for each $u\in \{0,1\}^N$, 
  the energy $(X_u)_{u\in \mathbb T_N}$ is a centered Gaussian process with $\E[X_vX_w] = N\cdot A\pf{|v\wedge w|}{N} = a(|v\wedge w|)$, and it is not necessary to specify the underlying probability space. 
  We explicitly specify the probability space for convenience of change-of-measure arguments in Section~\ref{s:seq}.
\end{rem}



In the special case of CREM with $A(x)=x$, the $X_u$'s form a time-homogeneous branching random walk (BRW), and $\mun{N}$ is the corresponding BRW Gibbs measure. This model has been understood very well in the literature~\cite{shi2016branching}. However, for general $A$, the increments $Y_u$ have non-homogeneous variance depending on $|u|$, which makes the model more challenging to analyze. In addition, there is another similar model, known as the generalized random energy model (GREM)~\cite{BK04}, where instead of branching at every step, there are only a constant number of increments (see \Cref{d:grem}). Finally, as $N\to \infty$, given $a:\R_{\ge 0}\to \R_{\ge 0}$, one can also define an infinite CREM on $\{0,1\}^\N$ (see \Cref{d:crem-iy}).

The algorithmic question of interest is to efficiently sample from the CREM Gibbs measure. More precisely, the goal is to design algorithms with polynomial running time, which output a sample whose distribution is close to the desired distribution under some metric. Before presenting our main results, let us briefly describe some existing algorithmic results for the CREM. A more detailed exposition can be found in Section~\ref{sec:background}.

Sampling from the CREM has drawn much attention mainly due to the paper~\cite{addario2020algorithmic}, where the authors studied the question of determining the algorithmic hardness threshold to understand the relationship between a random energy landscape and algorithmic barriers. For CREM, they conjectured a threshold $\beta_G$ (c.f.~\eqref{eq:beta_g}) 
such that efficient sampling under the Kullback-Leibler (KL) divergence is only tractable for $\beta<\beta_G$. 
A recent work by Ho~\cite{Ho23a} confirms this for the CREM with non-concave $A$ by presenting a polynomial time algorithm in the regime $\beta<\beta_G$. However, their algorithm only gives guarantees of sublinear ($o(N)$) KL divergence. It is not known whether efficient sampling is still possible in total variation (TV) distance up to $\beta_G$. In this paper, we give two new, efficient sampling algorithms which achieve the stronger guarantee of $\ep$ TV distance. We also note that our results hold at high temperature, $\beta<\bmin$, where $\bmin$ is a threshold defined below.  

\begin{figure}
    \center
    \includegraphics[scale=0.5]{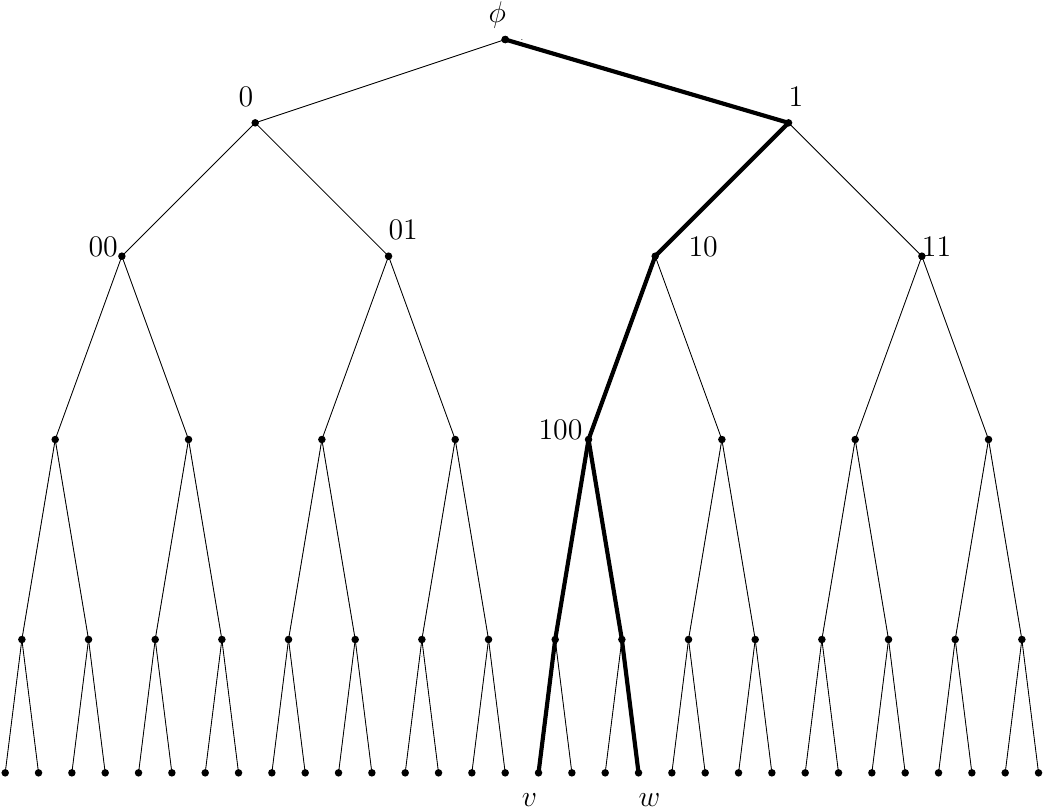}
    \caption{Example of the CREM tree with $N=5$. All vertices are denoted as strings of 0s and 1s. The length of string corresponds to the depth of the vertex. In the above graph, for vertices $v=10000$ and $w=10011$, by definition $v \wedge w=100$ denotes the longest common ancestor vertex. }
    \label{fig:crem-tree}
\end{figure}

\subsection{Main results}

We consider the CREM at high temperature. Below, we use $f'$ to denote the right derivative of $f$.

\begin{assm}[CREM at high temperature]\label{a:hi-temp}
    Suppose that $A:[0,1]\to \R$ is a 
    non-decreasing function with $A(0)=0$ and $A(1)=1$. 
    Consider two assumptions:
    \begin{enumerate}
        \item $\be<\bmin$, where
        \[
\bmin := \sfc{2\ln 2}{\amx}\text{ where }\amx = \sup_{x\in [0,1)} A'(x).
        \]
        Let $g = \gap$ denote the gap.
        \label{i:1}
        \item $\be<\be_c$, where 
        \[\be_c:= \sfc{2\ln 2}{\hamx}\text{ where }\hamx = \sup_{x\in [0,1)} \wh A'(x) = \wh A'(0).\] 
        \label{i:2}
    \end{enumerate}
    We consider the CREM with covariance function $A$. Equivalently, we are given the unnormalized covariance function $a:[0,N]\to \R$ with $a(0)=0$, $a(N)=N$, 
    $\amx = \sup_{x\in [0,N)} a'(x)$, 
    and 
    $\hamx = \sup_{t\in [0,N)}\wh a'(x) = \wh a'(0)$.
\end{assm}
Here, $\be_c$ is the critical inverse temperature \cite{BK04b} where the model undergoes a phase transition between the paramagnetic ($\be<\be_c$) and spin glass ($\be>\be_c$) phase. We also define the critical point for the algorithmic phase transition $\be_G$ as in~\cite{addario2020algorithmic,Ho23a}, 
\begin{align}\label{eq:beta_g}
  \beta_G:=
  \begin{cases}
  \sfc{2\ln 2}{\sup_{t: A(t) \ne \wh A(t)} A'(t)}, & \quad \text{if $A$ non-concave}, \\
  \infty, & \quad \text{if $A$ concave.}
  \end{cases}
\end{align}
We note that 
\[\bmin = \min \{\be_c, \be_G\},\] 
because $\amx = \sup_{x\in [0,1]}A'(x)$ is the supremum over both the set where $A(x)=\wh A(x)$, as in $\be_c$, and the set where $A(x)\ne \wh A(x)$, as in $\be_G$. 
In the special case where 
$A$ is concave, $\bmin=\be_c<\be_G=\iy$. If the sup of the derivative is in the non-concave portion (where $\wh A \ne A$), then $\bmin=\be_G \le  \be_c$.


Our main theorem is that in the high-temperature regime $\be<\bmin$, we can achieve efficient sampling under TV distance either by a Markov chain or a sequential sampling algorithm. Here, efficient means with polynomial dependence on system size $N$, failure probability $\de$, and TV accuracy $\ep$, when $g$ and $\amx$ are fixed. We have not optimized the exponents and they can likely be improved. We allow our algorithms access to intermediate values of the tree, $X_u$ for all $u\in \T_N$.
\begin{thm}\label{t:main-hi-temp}
    Given Assumption~\ref{a:hi-temp}(\ref{i:1}) ($\be<\bmin$), 
    with probability at least $1-\de$ over the CREM, to generate a sample from a distribution
    $\wh \mu$ such that 
    $\TV(\wh\mu,\mun{N})<\ep$, 
\begin{enumerate}
    \item 
    \Cref{a:mc-crem} (Markov chain sampler) takes time $\pf{\amx}{\de g}^{O(\amx/{g}^4)} + \pf{N}{\ep {g}^{1/g}}^{O(1/g)} \ln \prc{\de}$.
    \item 
    \Cref{a:seq} (sequential sampler) takes time $\pf{N\amx}{\ep g}^{O({\amx}^2/{g}^8)}$.
\end{enumerate}
\end{thm}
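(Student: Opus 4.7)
My plan is to construct both samplers by approximating the subtree partition functions that appear in the exact Gibbs conditional probabilities on the binary tree, relying on two preliminary tools. The first is a quantitative concentration bound showing that in the high-temperature regime $\be<\bmin$, the normalized partition function $\Zhatn{n}$ stays close to $1$ in probability, with an explicit rate in the gap $g$, coming from a second-moment computation on the tree. The second is a contiguity statement for the \emph{tilted} CREM: after a path is sampled from the approximate Gibbs measure down to depth $n$, the subtree rooted at the chosen vertex is not a standard CREM, but its law is absolutely continuous with a controllable Radon--Nikodym derivative with respect to a genuine CREM on $\{0,1\}^{N-n}$. Contiguity is what lets the concentration bound transport to every subtree encountered along the sampled path, whose statistics are otherwise biased by the algorithm's own choices.

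For the sequential sampler, I would descend the tree one level at a time; at a vertex $v$ of depth $n$, the exact conditional probability of next choosing $v0$ is proportional to $e^{\be Y_{v0}}Z^{(v0)}$, where $Z^{(v0)}$ is the partition function of the subtree rooted at $v0$ of remaining depth $N-n-1$. I would approximate each $Z^{(v0)}$ by an estimator $\widehat Z$ obtained by summing the truncation at some depth $k=O(1/g)$ and closing off the deeper levels by their conditional expectation, at cost $2^k$ per query. The per-step TV error is controlled by the relative fluctuation $|\widehat Z/\Ea \widehat Z - 1|$, which by the concentration bound is at most $\ep/N$ outside a bad event of small probability once $k$ is large enough in $1/g$ and $\amx$. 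A union bound over the $N$ levels, combined with contiguity to handle the compounded conditioning from previous samples, yields total TV error $\ep$ with disorder probability at least $1-\de$.

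For the Markov chain sampler, I would use a block-dynamics chain that at each step picks a random internal vertex $v$ and resamples the path from $v$ to a leaf by brute-force Gibbs sampling inside the $2^{k_0}$-sized subtree, for block size $k_0=O(1/g)$. Mixing within the typical set (the event that all subtree partition functions along the current path are concentrated) would be proved by a path coupling argument restricted to this set, yielding a polynomial mixing time in $N$ and $1/\ep$. The exponentially small global spectral gap indicated in the abstract is bypassed by initializing from a warm start generated by a coarser run of the sequential procedure, so that the disorder-failure cost $(\amx/(\de g))^{O(\amx/g^4)}$ is absorbed into start-up, and the subsequent chain contributes only the $\ln(1/\de)$ factor.

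The principal obstacle is the contiguity and the propagation of errors in TV rather than KL: unlike the $o(N)$-KL guarantee of~\cite{Ho23a}, TV distance requires controlling rare events where the partition function deviates, and these events depend on the random path produced by the sampler itself. Establishing the required second-moment control on the tilted measure uses the full strength of $\be<\bmin$, and is what forces the $(1/g)^{O(1)}$ exponent in the running time; the extra powers $g^4$ and $g^8$ in the stated bounds come from the need to combine the tilted concentration with the coupling / truncation error in each algorithm.
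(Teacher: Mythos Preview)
Your sequential-sampler sketch is in the right spirit and matches the paper's strategy: approximate each subtree partition function by a finite-depth lookahead, and transfer the concentration bound to the biased subtrees via a change-of-measure (contiguity) argument. Two quantitative points are off, though. First, the lookahead depth is not $O(1/g)$; the paper needs $m=\Theta\!\big(\amx^2 g^{-8}\ln(N\amx/(\ep g))\big)$, and this is exactly what produces the exponent $\amx^2/g^8$ in the running time. Second, a bare second-moment bound on $\Zhatn{n}$ only gives additive concentration; to get the \emph{multiplicative} bound $|\Zhatn{m}/\Zhatn{N}-1|\le \ep$ required for TV control, the paper additionally proves a quantitative lower bound on the free energy by comparison to a GREM (Lemmas~\ref{l:comp-Z}--\ref{l:Eln-lnE}), which you have not accounted for.

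Your Markov-chain plan, however, diverges from the paper both in the algorithm and in the analysis, and has a genuine gap. \Cref{a:mc-crem} is not a block dynamics on leaf configurations: it is a Metropolis walk on the \emph{entire} tree $\T_N$ (vertices at all depths), with stationary measure proportional to $\mun{n}$ on each level $n$; the output is the first time the walk is at depth $N$. The analysis is via \emph{$s$-conductance} (Lemmas~\ref{l:trees-suffice}--\ref{c:trees-m0-suffice}): one shows that any union of subtrees with mass $\ge s$ has boundary-to-volume ratio at least $\operatorname{poly}(1/N,1/L)$, after trimming the rare subtrees whose normalized partition function exceeds a threshold $L$. The warm start is simply the root $\phi$, which is $2N$-warm; no auxiliary sampler is needed. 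Your proposed path-coupling-on-a-typical-set argument does not obviously go through: the spectral gap is exponentially small (Theorem~\ref{t:ub-gap}), and a warm start does not rescue a coupling argument---you would need one-step contraction inside the typical set and a mechanism to control excursions out of it, neither of which you supply. Moreover, since the theorem is a statement about the specific \Cref{a:mc-crem}, analyzing a different chain does not prove it.
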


\begin{rem}
    \Cref{t:main-hi-temp} does not place any assumption on the concavity of $A$. \cite[Theorem 1.13]{Ho23a} shows that for CREM with non-concave $A$, with high probability, sampling under KL divergence in the regime $\beta>\beta_G$ is hard. This readily implies that sampling under TV distance is hard. In the case of non-concave $A$, where $\bmin=\beta_G$, \Cref{t:main-hi-temp}, completes the algorithmic picture for CREM. 
    On the other hand, in general $\bmin<\be_G$ and Ho's algorithm achieving sublinear KL divergence works up to $\be_G$; in particular, for concave $A$ where $\beta_G = \infty$, Ho's algorithm works for all $\beta>0$. It is an open question whether in general, sampling under TV distance also works for all $\beta<\beta_G$, or there is a region where it is possible to sample with sublinear KL but not $\ep$ in TV. 
\end{rem}

\begin{algorithm}[h!]
\caption{Markov chain sampler}
\label{a:mc-crem}
\begin{algorithmic}[1]
\INPUT CREM instance with 
unnormalized covariance function $a$, 
inverse temperature $\be$, and depth $N$; failure probability $\de$; TV accuracy $\ep$
\State Let $m_0 = \ce{C\amx {g}^{-4}\ln \pf{\amx}{g\de}}$ where 
$C$ is an appropriate universal constant. 
\State If $m_0\ge N$, then sample by brute force via computing $e^{\be X_v}$ for all $|v|=N$. Otherwise: 
\State Define a distribution $\pi$ on the tree by 
\begin{align*}
    \pi(v) &\propto 
    \begin{cases}
        \sum_{w\in \Desc^0(v),\,|w|=m_0} e^{\be X_w}, & |v|\le m_0\\
        \fc{\Zn{m_0}}{\Ea [\Zn{m}|\sF_{m_0}]} e^{\be X_v}
        = \Zn{m_0} \cdot 2^{-m} \exp\pa{-\fc{\be^2}2(a(m)-a(m_0)) }\cdot e^{\be X_v},
        & |v|> m_0
    \end{cases}
\end{align*}
\State Run the Markov chain (\Cref{a:mc-tree}) on the tree with stationary distribution $\pi$ for $T$ steps, where $T=\ce{\pf{N}{{g}^{1/g}\de}^{C'/g} \ln \pf N\ep}$ for a universal constant $C'$, to obtain $V_T$.
\OUTPUT $V_T$ if $|V_T|=N$. Otherwise, re-run the algorithm. 
\end{algorithmic}
\end{algorithm}

\begin{algorithm}[h!]
\caption{Markov chain sampler for distribution on tree}
\label{a:mc-tree}
\begin{algorithmic}[1]
\INPUT Distribution $\pi\propto \wt\pi$ on binary tree $\T_N$, number of steps $T$.
\State Let $V_0=\phi$ (root node).
\For{$t=1$ to $T$}
    \State Let $n=|V_{t-1}|$. \algorithmiccomment{current depth}
    \State With probability $\rc 3$ each, propose a transition to the parent or the two children. (Automatically reject if the parent is chosen while at the root, or a child is chosen while at a leaf.)
    \State If the parent $\Par(V_{t-1})$ is chosen, then accept (that is, set $V_t=\Par(V_{t-1})$) with probability \[
\min\bc{
\fc{\wt \pi(\Par(V_{t-1}))}{\wt \pi(V_{t-1})}, 1
}.
    \]
    \State If a child $V_{t-1}x$ is chosen ($x\in \{0,1\}$), then accept (set $V_t= V_{t-1}x$) with probability
    \[
\min\bc{
\fc{\wt \pi(V_{t-1}x)}{\wt\pi(V_{t-1})}, 1
}.
    \]
    \State If we reject, set $V_t = V_{t-1}$.
\EndFor{}
\OUTPUT{$V_t$}
\end{algorithmic}
\end{algorithm}

\begin{algorithm}[h!]
\caption{Sequential sampler}
\label{a:seq}
\begin{algorithmic}[1]
\INPUT CREM instance with unnormalized covariance function $a$, inverse temperature $\be$, and depth $N$; accuracy parameter $\ep$
\State Set $v=\phi$ (root node).
\State Let $m = \ce{C\amx^2 {g}^{-8}\ln \pf{N\amx}{g\ep}}\vee N$ where 
$C$ is an appropriate universal constant. \algorithmiccomment{Lookahead for approximation} 
\For{$t=1$ to $N-m$}
    \State For $x=0, 1$, compute \algorithmiccomment{Currently, $|v|=t-1$.}
    \[\Ztnv{N}{vx} = \Zhatnv{t+m}{vx} = 2^{-m} \exp\pa{-\fc{\be^2}{2} (a(t+m)-a(t))} \sum_{w\in \{0,1\}^m} e^{\be (X_{vxw}-X_{vx})}.\]
    \State Set \algorithmiccomment{Choose a child using estimated marginal probabilities.}
    \begin{align*}
        v\mapsfrom \begin{cases}
            v0, &\text{with probability }\fc{e^{\be Y_{v0}}\Ztnv{N}{v0}}{e^{\be Y_{v0}}\Ztnv{N}{v0} + e^{\be Y_{v1}}\Ztnv{N}{v1}}\\
            v1, &\text{with probability }\fc{ e^{\be Y_{v1}}\Ztnv{N}{v1}}{e^{\be Y_{v0}}\Ztnv{N}{v0} + e^{\be Y_{v1}}\Ztnv{N}{v1}}.
        \end{cases}
    \end{align*}
\EndFor{}
\State Sample $w\in \{0,1\}^m$ with probability $p(w)\propto e^{\be (X_{vw}-X_v)}$. \Comment{Currently, $|v|=N-m$.}
\OUTPUT $vw$.
\end{algorithmic}
\end{algorithm}

For an explanation of the notations used in Algorithms~\ref{a:mc-crem} and~\ref{a:seq}, see \Cref{s:notation}. The Markov chain algorithm relies on a polynomial bound on the $s$-conductance---that is, on showing that sets of size at least $s$ (chosen to depend on $\ep, \de, N$) are not too bottlenecked in the Markov chain. We note that the algebraic rate probably cannot be improved to a geometric rate because the conductance and spectral gap are exponentially small. For simplicity, we show this in the case that $A(x)=x$ (the branching random walk). 
\begin{thm}[Upper bound for spectral gap]\label{t:ub-gap}
Consider the CREM with $A(x)=x$ and inverse temperature $\be>0$. 
With probability $1-e^{-\Om(N)}$ over the randomness in the CREM, the spectral gap of the Markov chain in \Cref{a:mc-crem} is $\exp(-\Om(N))$, where the implicit constants depend on $\be$. 
\end{thm}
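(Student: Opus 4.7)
My plan is via Cheeger's inequality: exhibit, with probability $1-e^{-\Om(N)}$, a set $S\subset\T_N$ such that $\min(\pi(S),\pi(S^c))\ge e^{-o(N)}$ and the flow $Q(S,S^c)\le e^{-\Om(N)}$; then the conductance is $e^{-\Om(N)}$ and Cheeger gives $\text{gap}\le 2\cdot\text{conductance}=e^{-\Om(N)}$.

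I would take $S=\{v\in\T_N:v_1=0\}$, the left root-subtree, so that the only edge crossing the cut is $\phi$--$0$. By detailed balance and the definition of $\pi$,
\[
Q(S,S^c) \;=\; 2\pi(\phi)P(\phi,0) \;=\; \fc{2}{3}\,\fc{Z_{\be,m_0}^{(0)}}{Z_{\text{tot}}},
\]
where $Z_{\be,m_0}^{(0)}:=\sum_{|w|=m_0,\,w_1=0}e^{\be X_w}$ and $Z_{\text{tot}}=\sum_{v}\wt\pi(v)$. Since $m_0=O(\log N)$, $Z_{\be,m_0}^{(0)}\le Z_{\be,m_0}$ is polynomially bounded in $N$ with high probability. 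The task therefore reduces to showing that $Z_{\text{tot}}$ is exponentially large with high probability, while also ensuring neither $\pi(S)$ nor $\pi(S^c)$ is itself exponentially small.

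The exponential lower bound on $Z_{\text{tot}}$ comes from random ``trap paths'': descending paths $v_0\to v_1\to\cdots\to v_k$ of length $k=\Theta(N)$ with every increment $Y_{v_i}\ge t$ for a carefully chosen threshold $t$. Applying $\wt\pi(wx)/\wt\pi(w)=\rc{2}e^{-\be^2/2+\be Y_{wx}}$ along the path gives
\[
\wt\pi(v_k)\;=\;\wt\pi(v_0)\exp\pa{k\pa{\be t-\fc{\be^2}{2}-\log 2}},
\]
which is exponentially large in $k$ whenever $\be t>\be^2/2+\log 2$. A first-and-second moment calculation on the number of such length-$k$ trap paths in each root-subtree shows that, with probability $1-e^{-\Om(N)}$ and for appropriate choices of $t,k$, such a trap exists on each side of the root. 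Given both traps, $Z_{\text{tot}}\ge\wt\pi(v_k^L)+\wt\pi(v_k^R)$ is exponentially large, so $Q(S,S^c)\le e^{-\Om(N)}$; and since each side contains a trap with comparable contribution, $\min(\pi(S),\pi(S^c))\ge e^{-o(N)}$, which is enough to conclude.

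The main obstacle will be the parameter optimization underlying the trap construction: one must simultaneously choose $t,k$ so that the per-step trap factor $\be t-\be^2/2-\log 2$ is positive, the expected number of trap paths is $e^{\Om(N)}$ (this constrains $k\lesssim 2N\log 2/t^2$), and the starting vertex $v_0$ sits at shallow enough depth that $\wt\pi(v_0)$ does not itself cancel the trap gain. The second-moment step also requires controlling correlations between pairs of trap paths sharing a common ancestral prefix of length $\ell$, which contribute a factor $e^{\ell t^2/2}$ that must be summed and shown to be dominated by the $\ell=0$ term.
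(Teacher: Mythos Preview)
Your approach has a genuine arithmetic obstruction that cannot be repaired by parameter tuning. Write out what you actually need: for the cut at the root to have $Q(S,S^c)=e^{-\Omega(N)}$, you need $Z_{\text{tot}}/Z_{\be,m_0}^{(0)}=e^{\Omega(N)}$. But for $|v|>m_0$ one has $\wt\pi(v)=Z_{\be,m_0}\cdot 2^{-|v|}e^{-\be^2(|v|-m_0)/2}e^{\be X_v}$, so the level-$n$ mass equals $Z_{\be,m_0}e^{\be^2 m_0/2}\Zhatn{n}$, and hence $Z_{\text{tot}}\asymp Z_{\be,m_0}\sum_{n}\Zhatn{n}$. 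In the regime $\be<\sqrt{2\ln 2}$ the martingale $(\Zhatn{n})$ is $L^p$-bounded for some $p>1$, so by Doob's maximal inequality $\Pj(\max_n\Zhatn{n}\ge e^{\ep N})\le e^{-\Omega(N)}$ for every $\ep>0$. Thus with probability $1-e^{-\Omega(N)}$ one has $Z_{\text{tot}}/Z_{\be,m_0}\le e^{o(N)}$, and the conductance of your root-subtree is only $e^{-o(N)}$ (in fact $\Theta(1/N)$), not $e^{-\Omega(N)}$.

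Your trap calculation runs into exactly this wall. For the per-step factor $\be t-\be^2/2-\ln 2$ to be positive you need $t>\be/2+(\ln 2)/\be\ge\sqrt{2\ln 2}$, hence $p:=\Pj(Y\ge t)<1/2$. If $v_0$ is shallow there are only $2^{|v_0|}\cdot 2^k=N^{O(1)}\cdot 2^k$ candidate paths and the expected count $(2p)^k\cdot N^{O(1)}\to 0$ for $k=\Theta(N)$; if instead $v_0$ is at depth $N-k$ with $k\lesssim 2N\ln 2/t^2$, then $\wt\pi(v_0)$ is typically $e^{-(N-k)(\ln 2+\be^2/2)+O(\sqrt N)}$ and you need $k\be t>N(\ln 2+\be^2/2)$ together with $kt^2<2N\ln 2$. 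Eliminating $k$ forces $t<2\ln 2/(\be/2+(\ln 2)/\be)$, while $k\le N$ forces $t>\be/2+(\ln 2)/\be$; by AM--GM these intervals are disjoint for every $\be\ne\sqrt{2\ln 2}$. Equivalently, even the \emph{global} maximizer $\max_{|v|=N}X_v\sim\sqrt{2\ln 2}\,N$ yields $\wt\pi(v)/Z_{\be,m_0}\le e^{-N(\sqrt{\ln 2}-\be/\sqrt 2)^2}\le 1$, so no single vertex (and hence no single trap path) can make $Z_{\text{tot}}$ exponentially large.

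The paper avoids this by not cutting at the root. It first proves a power-law lower tail $\Pa(\Zhatn{n}\ge e^{cn})\ge e^{-c'' n}$ for the \emph{normalized} partition function (via an inductive argument tracking a single vertex whose Gibbs mass $p_v$ stays bounded below while $X_v$ grows linearly). Then among the $2^{(1-c)N}$ independent subtrees rooted at depth $(1-c)N$, with probability $1-e^{-\Omega(N)}$ some $v$ has $\Zhatnv{N}{v}\ge e^{\Omega(N)}$; the bottleneck set is $\Desc^0(v)$, whose conductance is at most $\pi(v)/\pi(\Desc^0(v))\lesssim 1/\Zhatnv{N}{v}=e^{-\Omega(N)}$. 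The essential missing idea in your proposal is that the bad set must be a \emph{subtree at linear depth} whose subtree-to-entrance mass ratio is large, not a root-subtree whose total mass is large.
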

We briefly describe the two algorithms. For the Markov chain \Cref{a:mc-crem}, we simulate a Markov chain on the tree $\T_N$ based on a Metropolis-Hastings step (\Cref{a:mc-tree}), such that the stationary distribution restricted to level $n$ is $\mun{n}$. For this to efficiently give a sample at the $N$th level, we must ensure that the probabilities assigned to each level of the tree are comparable, which suggests that it needs to divide the unnormalized probability $e^{\beta X_v}$ at level $m$ by $\Ea \Zn{m}$. To take care of fluctuations for small $m$, we in fact first compute exactly the partition function at a level $m_0$ and condition on its value. (This is not necessary, but makes the bounds nicer.)

We remark that as stated, \Cref{a:mc-crem} will only output a valid sample $\Te(1/N)$ of the time, and this can be made more efficient by reweighting the $N$th level to have constant probability under the stationary distribution. For simplicity, we analyze the algorithm without reweighting.

The sequential sampling \Cref{a:seq} relies on the following result for approximating the full partition function, using the partition function at some depth that is logarithmic in the desired error. 
\begin{thm}\label{t:main-Z}
There is a constant $C$ such that 
if $m\ge C\pa{\fc{\amx}{g^4} \ln \prc{\de g} + \rc{g^{2}} \ln \prc \ep}$, then
given Assumption~\ref{a:hi-temp}(\ref{i:1}) ($\be<\bmin$), 
    \[
    \Pa\pa{\ab{\fc{\Zhatn{m}}{\Zhatn{N}}- 1}\le \ep}\ge 1-\de.  
    \]
\end{thm}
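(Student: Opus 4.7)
By the tower property, $(\hat Z_{\beta,n})_n$ is a positive $\mathcal F_n$-martingale with mean $1$. Decomposing at level $m$, a direct computation using $\mathbb E[Z_{\beta,N}\mid\mathcal F_m] = Z_{\beta,m}\cdot 2^{N-m}\exp(\beta^2(a(N)-a(m))/2)$ gives
\[
R := \frac{\hat Z_{\beta,N}}{\hat Z_{\beta,m}} = \sum_{u \in \{0,1\}^m} \mu_{\beta,m}(u)\,\hat Z_{N-m}^{(u)},
\]
where $\hat Z_{N-m}^{(u)}$ is the normalized partition function of the depth-$(N-m)$ subtree rooted at $u$, and the $\hat Z_{N-m}^{(u)}$ are conditionally independent given $\mathcal F_m$ with mean $1$. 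Since $|\hat Z_{\beta,m}/\hat Z_{\beta,N} - 1| = |R - 1|/R$, it suffices to show $R$ is close to $1$ and bounded away from $0$ with probability $\ge 1-\delta$.

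My first step is to spread out the Gibbs weights: show that, on an event of $\Pa$-probability at least $1-\delta/2$,
\[
\max_{u \in \{0,1\}^m}\mu_{\beta,m}(u)\ \le\ \exp\!\Bigl(-c\,\tfrac{g^2}{a_{\max}}\,m\Bigr).
\]
Heuristically $\log\mu_{\beta,m}(u)=\beta X_u-\log Z_{\beta,m}$; at high temperature $\log Z_{\beta,m}$ concentrates near $m\ln 2+\frac{\beta^2}{2}a(m)$, while $\max_u X_u\lesssim \sqrt{2\,a_{\max}(\ln 2)\,m}$ with high probability, and the difference is quadratic in the gap (explaining the $g^2$). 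Quantitatively I would combine a Borell--TIS bound on $\log Z_{\beta,m}$ with a Chernoff-plus-union bound on $\max_u X_u$; the union bound and translation of the Gaussian-tail exponent from $g$ into the relevant large-deviation exponent together produce the factor $a_{\max}/g^4\cdot \log(1/(\delta g))$ in the threshold on $m$. (Brute-force handling of an initial block of depth $m_0$, as used in \Cref{a:mc-crem}, takes care of fluctuations at small depth where the asymptotics are not yet sharp.)

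My second step is a variance bound on $R$: by conditional independence,
\[
\mathrm{Var}(R\mid\mathcal F_m) = \sum_u \mu_{\beta,m}(u)^2\,\mathrm{Var}(\hat Z_{N-m}^{(u)}) \le \max_u \mu_{\beta,m}(u)\cdot \sup_u \mathrm{Var}(\hat Z_{N-m}^{(u)}).
\]
A direct second moment computation yields $\mathbb E[(\hat Z_{\beta,n})^2]=\sum_{k=0}^{n-1}2^{-k-1}e^{\beta^2 a(k)}+2^{-n}e^{\beta^2 a(n)}$, which stays bounded only for $\beta<\beta_c/\sqrt 2$. To cover the full range $\beta<\bmin$, I would truncate the partition function at an energy threshold slightly above the typical scale $\beta\cdot a(\cdot)$: using the contiguity result between the prior disorder measure $\Pa$ and the ``tilted'' measure induced by Gibbs sampling (announced in the abstract), one shows the truncated $\widetilde Z$ equals the full $Z$ except on an event of probability $\le\delta/4$ and has variance $V\le\mathrm{poly}(1/g)$. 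Applying Chebyshev conditionally then yields
\[
\Pa\bigl(|R-1|>\tfrac{\epsilon}{2}\ \big|\ \mathcal F_m\bigr)\ \le\ \frac{4V}{\epsilon^2}\max_u\mu_{\beta,m}(u)\ \le\ \frac{\delta}{2},
\]
once $m\ge C g^{-2}\log(V/(\epsilon^2\delta))$, which is the second term $g^{-2}\log(1/\epsilon)$ in the bound. Combining with the event of the first step and unwinding $|R-1|\le\epsilon/2$ together with $R\ge 1/2$ into $|\hat Z_m/\hat Z_N-1|\le\epsilon$ completes the argument.

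The main obstacle is the variance estimate on the truncated partition function $\widetilde Z$ in the regime $\beta_c/\sqrt 2 \le \beta < \bmin$, where the untruncated second moment diverges. Choosing the truncation threshold small enough to produce $V=\mathrm{poly}(1/g)$ yet large enough that $\widetilde Z=Z$ with probability $\ge 1-\delta/4$ requires a sharp overlap analysis, and the contiguity of the tilted CREM is what converts atypicality statements under $\Pa$ into statements under the Gibbs-sampled trajectory---this is the technical heart of the proof.
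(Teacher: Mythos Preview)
Your decomposition $R=\sum_u\mu_{\beta,m}(u)\hat Z^{(u)}_{N-m}$ and the overall two-step strategy (first control $\max_u\mu_{\beta,m}(u)$, then show $R\approx 1$) are both sound and match what the paper is ultimately doing. However, there is a genuine gap in the second step.

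You correctly identify that $\Var(\hat Z^{(u)}_{N-m})$ diverges for $\beta\ge\beta_c/\sqrt 2$, but your proposed repair---truncation together with the contiguity of the tilted CREM---is circular. In the paper, the contiguity result (\Cref{t:contiguous} and its quantitative form \Cref{l:com-finite}) is proved \emph{using} \Cref{l:ratio-1}, which is exactly the content of \Cref{t:main-Z}. So you cannot invoke contiguity here. The paper sidesteps the second-moment blowup differently: instead of your one-shot variance of the full subtree partition function, it works with the martingale $(\hat Z_{\beta,n})_{m\le n\le N}$ level by level, stopping it the first time either $\max_{|v|=n}e^{\beta X_v}$ is too large or $\hat Z_{\beta,n}$ is too small (\Cref{l:ZN}). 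Each one-step increment then involves only a single layer of Gaussians, so its conditional variance is bounded by $\max_{|v|=n}p_{\beta,v}$ times a finite constant, and summing these increment variances gives an exponentially small bound valid for all $\beta<\bmin$. The additive bound $|\hat Z_{\beta,m}-\hat Z_{\beta,N}|$ is then converted to a multiplicative one via a separate high-probability lower bound on $\hat Z_{\beta,m}$.

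A second, smaller point: your first step (spreading the Gibbs weights) needs a quantitative lower bound on $\E\ln Z_{\beta,m}$ of the form $\ln\E Z_{\beta,m}-O(\delta m)$. Borell--TIS gives concentration around $\E\ln Z_{\beta,m}$, but not that $\E\ln Z_{\beta,m}$ is close to $\ln\E Z_{\beta,m}$; the paper obtains this via a comparison with a GREM and a second-moment count in the GREM (\Cref{c:grem-lb}, \Cref{l:comp-Z}, \Cref{l:Eln-lnE}), which is where the $a_{\max}/g^4$ dependence in the threshold on $m$ actually originates. Your sketch glosses over this step.
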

If one can approximately compute the partition functions for the conditional distributions after restricting the initial $n$ coordinates for any $n$, then it is possible to approximately sample via a sequential procedure: Once we have sampled the first $t-1$ coordinates $v$, we approximately compute the partition functions for $v0$ and $v1$ to then sample the next coordinate with the appropriate probability. (Note that \Cref{a:hi-temp}(\ref{i:1}) implies \Cref{a:hi-temp}(\ref{i:1}) for the CREM starting from any level $n$, so \Cref{t:main-Z} can be applied to the subtrees.)
The problem with this reduction is that \Cref{t:main-Z} only holds with high probability under the randomness of the CREM, and even if the first $n$ coordinates are sampled perfectly according to the CREM, the subtree we arrive at is no longer a CREM, but a tilted one, because vertices whose subtrees have larger partition functions are more likely to be chosen. However, we can bound the Radon-Nikodym derivative between the tilted measure and the CREM, and hence still obtain the result of \Cref{t:main-Z} under the tilted measure with a larger failure probability. We note that the same argument shows a result on contiguity of these tilts for the infinite CREM, which may be of independent interest (see \Cref{t:contiguous}).

\subsection{Background and related works}\label{sec:background}

For the early toy models for spin glasses (REM, GREM and CREM), not many positive results on efficient sampling exist. For REM, it was shown in~\cite{MR1627811} that the spectral gap is exponentially small, and thus the Metropolis dynamics is slow mixing. Similar spectral gap estimates have recently been obtained in the GREM case~\cite{NF20}.\footnote{We also show an exponentially small spectral gap for the CREM in Theorem~\ref{t:ub-gap}, so we note with caution that small spectral gap does not necessarily preclude MCMC algorithms from giving efficient algorithms.} 
However, one can hope that the continuous hierarchical structure of the CREM can give algorithms more of a ``foothold" in finding low-energy states. Indeed, there has been recent positive progress for sampling from the CREM.

Addario-Berry and Maillard~\cite{addario2020algorithmic} investigated the algorithmic threshold problem for optimization. They showed that there exists a threshold $x_*$ in terms of $A$ in the following sense. For any $\epsilon>0$, a linear time algorithm can output a vertex $v\in \{0,1\}^N$ with $X_v \ge (x_* - \epsilon)N$, while no polynomial time algorithm can find $v$ such that $X_v\ge (x_*+\epsilon)N$. Guided by this result, they raised the problem of sampling from the CREM and conjectured an explicit sampling hardness threshold $\beta_G$. 
They conjecture that for $\beta< \beta_G$, there exists an efficient algorithm that outputs a sample $v\in \{0,1\}^N$ close to a the true distribution under the Kullback-Leibler (KL) divergence, while for $\beta>\beta_G$, no polynomial time algorithm can achieve this.

Later, Ho and Maillard~\cite{Ho22} gave an efficient recursive algorithm for a special case of CREM, the branching random walk. Recently, Ho~\cite{Ho23a} extended these to the general case of CREM with a similar recursive algorithm and proved the conjecture. More concretely, for $A$ non-concave, they proved that in the algorithmically tractable regime ($\beta<\beta_G$), their algorithm can output a sample in polynomial time that close to a real sample with expected KL divergence scaling as $o(N)$. In the low-temperature regime ($\beta>\beta_G$), they showed that with high probability, any algorithm achieving the above sampling task will take at least exponential amount of time. For $A$ concave, efficient sampling is achievable for all $\beta>0$.


Beyond the above toy models, the sampling problem for more general spin glass models has recently seen much progress; in particular, the Sherrington-Kirkpatrick (SK) model has been the subject of intense study. The SK model is a mean-field model with random pairwise interactions between spins: the energy for each configuration $\sigma \in \{-1,+1\}^N$ is 
\[
H_N(\sigma) := \frac{1}{\sqrt{N}}\sum_{i<j} J_{ij} \sigma_i \sigma_j,
\]
and the associated SK Gibbs measure is accordingly defined as 
\[
\mu_{\text{SK}}(
\sigma) := \exp(\beta H_N(\sigma)) \cdot \left(\sum_{\sigma\in \{-1,+1\}^N} \exp(\beta H_N(\sigma))\right)^{-1}.
\]
It is known that the SK model undergoes a phase transition at $\beta=1$. In the high temperature regime ($\beta<1$), the model is replica symmetric, where the spin configurations are asymptotically independent. In the low temperature regime ($\beta >1$), the SK model has been conjectured to exhibit the so-called full-step replica symmetry breaking (RSB) phase. More specifically, it means that the Gibbs measure is asymptotically supported on an ultrametric tree with continuously many branchings, which is similar to the CREM hierarchical correlation structure. In this sense, the CREM can be treated as an approximation of the SK model. 

For sampling from the SK model, in 2019, Bauerschmidt and Bodineau~\cite{BB19} first proved a (non-standard) log-Sobolev inequality 
associated with the SK Gibbs measure. Later Eldan, Koehler and Zeitouni~\cite{EKZ22}, using the technique of stochastic localization, established a spectral gap estimate for the standard Glauber dynamics and thus derived a fast mixing result for Glauber dynamics. This automatically gives an efficient sampling algorithm (in TV distance). However, the above two results hold at a sub-region ($\beta<1/4$) of the entire high temperature regime ($\beta<1$). Extending the fast mixing of Glauber dynamics for SK to $\beta<1$ is still an open question. Similar results on proving functional inequalities for the more general $p$-spin glass models have been obtained in~\cites{ABXY24,AJKPV24}. Note that those spectral gap results are in sharp contrast with the results for the REM-based models. On the other hand, El Alaoui, Montanari and Sellke~\cite{EMS22} took a different approach by discretizing the stochastic localization process to sample from SK model, which, combined with the work of \cite{celentano2024sudakov},  works for $\beta<1$.  However, the sampling algorithm is only guaranteed under the Wasserstein-2 metric, which is weaker than the total variation distance as in~\cite{EKZ22}. Similar results have been obtained for more general $p$-spin models~\cite{EMS23}. Additionally, Huang, Montanari and Pham~\cite{HMP24} recently obtained an efficient sampling algorithm for spherical $p$-spin glass models under total variation distance, where the spin values are not discrete but on a continuous sphere. The algorithm is still based on discretizing the localization process.

To summarize, due to the critical differences on spectral gap estimates, although the REM models act as simplification for the SK model and its variants, designing and proving efficient sampling algorithms for the CREM is not necessarily easier. Besides, for the Ising spin glass models, efficient sampling can either only be achieved at high enough temperature under total variance distance, or under some weaker metric notion up to the critical threshold. Similarly for the CREM, 
Ho's result~\cite{Ho23a} can efficiently sample up to the hardness threshold $\beta_G$, but only gives weak KL divergence guarantees. Arguably, our result is the first efficient sampling algorithm for discrete spin glasses that (in some cases) works up to the critical threshold under the TV distance.

\subsection{Notation}\label{s:notation}

For a vertex of a binary tree $v=v_1\cdots v_m\in \T_N$, define the parent, ancestors, and descendants as follows, and the set version (for $S\subeq \T_N$) in the natural way.
\begin{align*}
\Par(v) &= v_1\cdots v_{m-1} & \Par(S) &= \set{\Par(v)}{v\in S}\\
\Desc(v) &= \set{v_1\cdots v_{m}x\in \T_N}{x \in \bigcup_{n>0} \{0,1\}^n} & \Desc(S) &= \bigcup_{v\in S} \Desc(v) \\
\Anc(v) &= \set{v_1\cdots v_n}{n<m} & 
\Anc(S) &= \bigcup_{v\in S} \Anc(v)
\end{align*}
Also define $\Desc^0$ and $\Par^0$ to include the vertex itself. We also use $\T^v_N$ to denote the tree rooted at $v$ ($\T^v_N = \Desc^0(v)$).

We also define the partition function starting at $v$ ($|v|=n$) with depth $m$, and the normalized partition function by 
\begin{align}
\label{e:Znv}
\Znv{n+m}{v} &= \sum_{|u|=m} e^{\be (X_{vu}-X_v)}, \\
\label{e:Zhatnv}
\Zhatnv{n+m}{a,v} &= \fc{\Znv{n+m}{v}}{\Ean{N} \Znv{n+m}{v}} = \fc{\Znv{n+m}{v}}{2^m e^{\fc{\be^2}2 (a(n+m)-a(n))}}.
\end{align}
We will write $\Zhatnv{n+m}{v} = \Zhatnv{n+m}{a,v}$ when $a$ is understood. Note that we use the subscript to $Z$ to denote the total depth in the original tree, rather than the depth starting from $v$, as in some previous works. 

We defer further definitions we will need until \Cref{s:seq}; see 
\Cref{s:nomen} for a complete list of notations for probability measures, random measures, and partition functions.

\subsection{Structure of the paper}

In \Cref{s:fe}, we show (quantitative) concentration properties of the partition function $\Zn{N}$ that are of general interest and used to prove guarantees for both the Markov chain and sequential sampling algorithms. This allows us to prove efficient approximation of the partition function under the CREM (\Cref{t:main-Z}). 
In \Cref{s:mc}, we prove the main theorem (\Cref{t:main-hi-temp}) using the Markov chain approach, and in \Cref{s:spectral} we prove the upper bound on the spectral gap (\Cref{t:ub-gap}). 
In \Cref{s:seq}, we combine a contiguity argument with \Cref{t:main-Z} to prove \Cref{t:main-hi-temp} using sequential sampling.

\section{Free energy and partition function approximation}
\label{s:fe}
We derive concentration properties of $Z_{\be, N}$ under the CREM, which will be essential for both approaches (Markov chain and sequential sampler). The bounds in this section allow us to prove \Cref{t:main-Z}, that the normalized partition function for depth $m$ approximates the full (depth $N$) partition function. In \Cref{s:Z-add}, we show this in an additive sense, i.e. we bound $|\Zhatn{m} - \Zhatn{N}|$. By then deriving a high-probability lower bound for $\Zhatn{N}$, we can then prove a multiplicative approximation (as in \Cref{t:main-Z}) in \Cref{s:Z-mult}.

We first recall the fundamental result 
\cite[Theorem 3.3]{BK04b}, which shows that fixing $A$ (with $A(0)=0$), the limiting free energy as $N\to \iy$ is 
\begin{align}
F_\be:=
\lim_{N\to \iy} \rc N \E_N \ln Z_{\be, N} 
&= \int_0^1 f\pa{\be \sqrt{\wh A'(s)}} \,ds,
\label{e:FE1}
\end{align}
where the convergence is in probability and
\[
f(x) = \begin{cases}
    \ln 2 + \fc{x^2}{2} , & x<\sqrt{2\ln 2}\\
    \sqrt{2\ln 2}\cdot x, & x\ge \sqrt{2\ln 2}.
\end{cases}
\]
(See \cite{Ho23a} for the extension to Riemann integrable $A$.)
In particular, if $\be<\be_c$, then $F_\be = \ln 2 + \fc{\be^2A(1)}{2}$. 
\cite[Theorem 3.1]{BK04b} computes the expected ground state energy density, 
\begin{align}
\lim_{N\to \iy}\rc N  
\E \max_{v\in \{0,1\}^N} \be X_{v} \to \be \sqrt{2\ln 2}\int_0^1 \sqrt{\wh A'(s)}\,ds.
\label{e:lim-max}
\end{align}
For our application, we will need to track $\Zn{m}$ for $0\le m\le N$, and so consider \eqref{e:FE1} and \eqref{e:lim-max} with integrals $\int_0^x$, $0\le x\le 1$. We note that if $\be \ge \be_c$, then for small $x$, \eqref{e:FE1} and \eqref{e:lim-max} will be equal.
If instead $\be< \be_c$, there will be a gap for any $x$, and hence we can expect the largest probability mass $p_{\be,v}$ at level $n$ to be exponentially small in $n$. We will take advantage of this to show fast convergence for $\Zhatn{n} = \fc{\Zn{n}}{\E_N \Zn{n}}$: we consider the martingale increments and observe that a sum of terms that each contribute an exponentially small amount to the total will be very concentrated. 

To work with the martingale $\Zhatn{n}$, we first note the calculation
\begin{align}
\label{e:EZ}
\Ea \Zn{n} &= 2^n \E_{X\sim \cal N(0,a(n))} e^{\be X} 
= 2^n \exp\pa{\fc{\be^2a(n)}2} 
= 2^n \exp\pa{\fc{\be^2N \cdot A\pf{n}{N}}2}\\
\label{e:FE}
\implies  \rc N \ln \Ea \Zn{N}& = \ln 2 + \fc{\be^2A(1)}2.
\end{align}
This quantity \eqref{e:FE} is called the annealed free energy. (To distinguish, \eqref{e:FE1} is called the quenched free energy.) 
Rephrasing the result \eqref{e:FE1}, when $\be<\be_c$, the quenched and annealed free energies are equal in the limit. 
We need to make quantitative the concentration of $\Zn{N}$ around $\E \Zn{N}$. 
For this, first note we have concentration of $\rc N \ln \Zn{N}$ around its mean $\rc N \E \ln \Zn{N}$, and then derive quantitative bounds for the approximation $ \rc N \E\ln \Zn{N} \approx \rc N \ln \E \Zn{N}$. 

Finally, we note that while we only assume $\be<\be_c$ in the above discussion, we will also need $\be<\be_G$, as we need the results to hold for the CREM starting at any intermediate level (see \Cref{l:renorm}).

\label{s:Z}
\subsection{Fluctuation of free energy and additive approximation}
\label{s:Z-add}

In this section, we 
bound $|\Zhatn{m} - \Zhatn{N}|$ using martingale arguments. We bound the expected maximum value of $X_v$ over leaves $v\in \{0,1\}^N$. While we expect any particular $X_v$ to be on the order of $\sqrt N$, the fact that there are $2^N$ vertices means that we expect the maximum to be on the order of $N$. 
\eqref{e:lim-max} gives the limiting expected maximum; to obtain concentration we consider the expectation of $\max e^{\be X_v}$. For our purpose, it suffices to give an upper bound where the coefficient of $N$ in the exponent is strictly less than $F_\be$. 
\begin{lem}\label{l:max}
For the CREM with covariance function $A$ (and unnormalized covariance function $a$), when $\be \le \sfc{2\ln 2}{A(1)}$, we have
    \[
    \Ea \max_{|v|=N} e^{\be X_v} 
    \le  2e^{\be N \sqrt{(2\ln 2) A(1)}}
    = 2e^{\be \sqrt{N(2\ln 2) a(N)}}.
    \]
\end{lem}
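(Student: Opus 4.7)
The plan is to use the moment method to control the expected maximum of the exponentiated Gaussians. Each random variable $X_v$ with $|v|=N$ is Gaussian with variance $a(N) = N\cdot A(1)$, and while we cannot just union bound $\mathbb{E}\max$, a standard $L^q$ trick exploits that there are only $2^N$ leaves.

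First I would observe that for any $q \ge 1$,
\[
\Bigl(\max_{|v|=N} e^{\beta X_v}\Bigr)^q \;=\; \max_{|v|=N} e^{q\beta X_v} \;\le\; \sum_{|v|=N} e^{q\beta X_v}.
\]
Taking expectations, applying Jensen's inequality with the concave map $x \mapsto x^{1/q}$, and using the moment generating function of a Gaussian, one gets
\[
\mathbb{E}^a \max_{|v|=N} e^{\beta X_v} \;\le\; \Bigl(\mathbb{E}^a \sum_{|v|=N} e^{q\beta X_v}\Bigr)^{1/q} \;=\; \bigl(2^N\bigr)^{1/q}\, \exp\!\Bigl(\tfrac{q\beta^2 a(N)}{2}\Bigr).
\]

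Next I would optimize the exponent $\tfrac{N\ln 2}{q} + \tfrac{q\beta^2 a(N)}{2}$ over $q \ge 1$. By AM–GM (or direct calculus), the minimum is attained at $q_\ast = \sqrt{2N\ln 2/(\beta^2 a(N))}$, giving the value $\beta\sqrt{2N(\ln 2)\,a(N)}$. The key compatibility check is that the hypothesis $\beta \le \sqrt{2\ln 2/A(1)}$ is exactly equivalent to $\beta^2 a(N) \le 2N\ln 2$, i.e.\ $q_\ast \ge 1$, so this optimum is admissible. (If the hypothesis failed, the minimum over $q\ge 1$ would occur at the boundary $q=1$ and yield a weaker estimate.)

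Substituting back produces
\[
\mathbb{E}^a \max_{|v|=N} e^{\beta X_v} \;\le\; \exp\!\Bigl(\beta\sqrt{2N(\ln 2)\,a(N)}\Bigr) \;\le\; 2\exp\!\Bigl(\beta\sqrt{2N(\ln 2)\,a(N)}\Bigr),
\]
which, upon rewriting $a(N) = N A(1)$ to put the factor of $N$ outside the square root, is exactly the claimed bound. There is no real obstacle here; the only mild subtlety is ensuring the optimizing $q_\ast$ lies in $[1,\infty)$, which is precisely what the assumed temperature bound guarantees. The slack factor of $2$ in the statement is therefore just cosmetic and not tight.
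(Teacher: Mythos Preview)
Your proof is correct, and in fact slightly sharper than the paper's: you obtain the bound without the extraneous factor of $2$. The approach, however, is genuinely different from the paper's.

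The paper first invokes the Sudakov--Fernique inequality to replace the correlated family $(X_v)_{|v|=N}$ by $2^N$ i.i.d.\ Gaussians $\sqrt{N}Y_i$ with the same variance, which can only increase the expected maximum. It then writes $\Ea \max_i e^{\be\sqrt N Y_i}$ as an integral of tail probabilities, splits at the level $e^{\be N\sqrt{2\ln 2}}$, and controls the upper tail via a union bound together with Gaussian tail estimates; a careful change of variables and completion of the square yields the final constant $2$.

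Your $L^q$ moment method bypasses Sudakov--Fernique entirely: since $\Ea\sum_{|v|=N} e^{q\be X_v}$ depends only on the marginals of the $X_v$ (each $\cal N(0,a(N))$), the correlation structure never enters. Optimizing over $q$ is then a one-line AM--GM, and the hypothesis $\be\le\sqrt{2\ln2/A(1)}$ is used exactly to ensure $q_*\ge 1$. This is shorter, more elementary, and loses nothing for the purpose at hand. The paper's route is more in the spirit of extreme-value computations and would more readily give information about the distribution of the maximum beyond its mean, but for the bare expectation bound your argument is cleaner.
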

\begin{proof}
By considering $\wt A(x) = \fc{A(x)}{A(1)}$ and $\wt \be = \sqrt{A(1)}\cdot \be$, it suffices to prove the lemma when $A(1)=1$.  
By Sudakov-Fernique, we obtain an upper bound by replacing the $X$'s by independent random Gaussians with the same variance,
\begin{align*}
    \Ea \max_{|v|=N} e^{\be X_v} 
    &\le 
    \Ea \max_{i\in [2^N]} e^{\be \sqrt{N} Y_i} 
\end{align*}
where $Y_i\sim \cal N(0,1)$ are iid. Now, using a union bound and the Gaussian tail bound in \Cref{l:gtail} \eqref{e:gaussian-tail-lower}, $\Pj_{Y\sim \cal N(0,1)}(Y\ge u) \le e^{-u^2/2}$, 
\begin{align*}
    \Ea \max_{i\in [2^N]} e^{\be \sqrt{N} Y_i} 
    &\le \int_0^\iy \Pa\pa{\max_{i\in [2^N]} e^{\be \sqrt{N} Y_i}  \ge u}\,du\\
    &\le \int_0^\iy \min\bc{1, 2^N \Pj_{Y\sim \cal N(0,1)} \pa{Y\ge \fc{\ln u}{\be \sqrt N}}}\,du\\
    &\le  e^{\be N \sqrt{2\ln 2}}
    + 2^N\int_{e^{\be N \sqrt{2\ln 2}}}^\iy  \Pj_{Y\sim \cal N(0,1)} \pa{Y\ge \fc{\ln u}{\be \sqrt N}}\,du\\
    &=  e^{\be N \sqrt{2\ln 2}}
    + 2^N\int_{\be N \sqrt{2\ln 2}}^\iy  \Pj_{Y\sim \cal N(0,1)} \pa{Y\ge \fc{v}{\be \sqrt N}}e^v\,dv\\
    &\le e^{\be N \sqrt{2\ln 2}}
    + 2^N \int_{\be N \sqrt{2\ln 2}}^\iy 
    \fc{\be \sqrt{N}}{\sqrt{2\pi}v} e^{-\fc{v^2}{2\be^2N}} 
    e^v\,dv\\
    &\le  e^{\be N \sqrt{2\ln 2}}
    + 2^N e^{\rc 2 \be^2N} \int_{\be N \sqrt{2\ln 2}}^\iy 
    \fc{1}{\sqrt{2\pi N\cdot 2\ln 2}} e^{-\rc{2\be^2 N} \pa{v - \be^2 N}^2} 
    \,dv\\
    &\le  e^{\be N \sqrt{2\ln 2}}
    + 2^N e^{\rc 2 \be^2N} \fc{\be}{\sqrt{2\ln 2}} \Pj_{Y\sim \cal N(0,1)}
    \pa{Y\ge \sqrt N (\sqrt{2\ln 2}-\be)} \\
    &\le 
     e^{\be N \sqrt{2\ln 2}}
    + 2^N e^{\rc 2 \be^2N} \fc{\be}{\sqrt{2\ln 2}} e^{-\fc{N (\sqrt{2\ln 2}-\be)^2}{2}}\\
    &\le 
     e^{\be N \sqrt{2\ln 2}}
    + e^{N\pa{\ln 2 + \fc{\be^2}2-\fc{ (\sqrt{2\ln 2}-\be)^2}{2}}}
    = 2 e^{\be N \sqrt{2\ln 2}}
\end{align*}
when $\be \le \sqrt{2\ln 2}$. 
\end{proof}

A basic result is that $\Zhatn{n}$ is a martingale. For the case of branching random walks, convergence properties of this martingale (termed the additive martingale) has been well-studied \cite{biggins1977martingale,biggins1992uniform,shi2016branching}.
\begin{lem}\label{l:mg}
Under $\Pa$, $(\Zhatn{n})_{0\le n\le N}$ is a martingale adapted to $(\sF_n)_{0\le n\le N}$.
\end{lem}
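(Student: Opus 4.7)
The plan is to verify the three defining properties of a martingale: adaptedness, integrability, and the martingale identity $\Ea[\Zhatn{n+1}\mid \sF_n] = \Zhatn{n}$.

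Adaptedness and integrability are immediate. By \Cref{d:crem}, $X_v = \sum_{m=0}^{|v|} Y_{v_1\cdots v_m}$ is $\sF_{|v|}$-measurable for every $v$ with $|v|=n$, so $\Zn{n}$ and hence $\Zhatn{n}$ is $\sF_n$-measurable. Integrability follows from \eqref{e:EZ}, which gives $\Ea \Zn{n} = 2^n\exp(\be^2 a(n)/2) < \iy$ and so $\Ea \Zhatn{n} = 1$.

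For the martingale identity, the idea is to split the sum over $|v|=n+1$ according to the parent at level $n$, use the independence of the new layer of Gaussian increments from $\sF_n$, and compare with $\Ea\Zn{n+1}$ computed via \eqref{e:EZ}. Concretely, writing each $v$ with $|v|=n+1$ as $wx$ with $|w|=n$ and $x\in\{0,1\}$, and using $X_{wx} = X_w + Y_{wx}$,
\begin{align*}
\Ea[\Zn{n+1}\mid \sF_n]
&= \Ea\Bigl[\sum_{|w|=n}\sum_{x\in\{0,1\}} e^{\be X_w} e^{\be Y_{wx}}\,\Big|\,\sF_n\Bigr]
= \sum_{|w|=n} e^{\be X_w}\sum_{x\in\{0,1\}}\Ea[e^{\be Y_{wx}}\mid \sF_n].
\end{align*}
Since the $Y_{wx}$ for $|w|=n$ are independent of $\sF_n$ and distributed as $\cal N(0,a(n+1)-a(n))$, the Gaussian moment generating function gives $\Ea[e^{\be Y_{wx}}\mid\sF_n] = \exp(\be^2(a(n+1)-a(n))/2)$, hence
\[
\Ea[\Zn{n+1}\mid\sF_n] = 2\exp\!\bigl(\tfrac{\be^2}{2}(a(n+1)-a(n))\bigr)\,\Zn{n}.
\]
Combining this with the ratio $\Ea\Zn{n+1}/\Ea\Zn{n} = 2\exp(\be^2(a(n+1)-a(n))/2)$ obtained from \eqref{e:EZ} yields $\Ea[\Zhatn{n+1}\mid\sF_n] = \Zhatn{n}$.

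There is no real obstacle: the only point worth a moment's care is the conditional independence used in passing the expectation inside the sum, which is justified because the increments $\{Y_{wx}:|w|=n,\,x\in\{0,1\}\}$ are jointly independent of $\sF_n$ by the product structure of $\Pa$ in \Cref{d:crem}. This recovers the well-known fact that in the branching random walk special case ($A(x)=x$) this is the additive martingale.
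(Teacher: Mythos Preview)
Your proof is correct and follows essentially the same approach as the paper: both split the sum at level $n+1$ by parent, use independence of the new increments $Y_{wx}$ from $\sF_n$, and divide by the ratio $\Ea\Zn{n+1}/\Ea\Zn{n}$. Your version is in fact slightly more thorough, since you explicitly check adaptedness and integrability, which the paper leaves implicit.
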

\begin{proof}
    Let $\E = \Ea$. We calculate, because $(Y_w)_{|w|>n}$ are independent of $\sF_n$, that
    \begin{align*}
        \E[Z_{\be, n+1} | \sF_n]
        &= \E\ba{\sum_{v\in \{0,1\}^n} \sum_{x\in \{0,1\}} e^{\be (X_v + Y_{vx})} | \sF_n} \\
        &= \sum_{v\in \{0,1\}^n} e^{\be X_v} \sum_{x\in \{0,1\}} \E\ba{e^{\be Y_{vx}} | \sF_n} \\
        &= \sum_{v\in \{0,1\}^n} e^{\be X_v} \cdot 2\E_{Y\sim \cal N(0,a(n+1)-a(n))}[e^{\be Y}]\\
        &= \Zn{n} \cdot 2\E_{Y\sim \cal N(0,a(n+1)-a(n))}[e^{\be Y}]\\
        \E[Z_{\be, n+1}] & =
        \E \ba{\E[Z_{\be, n+1} | \sF_n]}\\
        &= \E \Zn{n} \cdot 2\E_{Y\sim \cal N(0,a(n+1)-a(n))}[e^{\be Y}].
    \end{align*}
    Hence
    \begin{align*}
        \E[\wh Z_{\be, n+1} | \sF_n]
        &=\fc{\E[Z_{\be, n+1} | \sF_n]}{\E[Z_{\be, n+1}]} = 
        \fc{\Zn{n}}{\E[\Zn{n}]} = \Zhatn{n}.
    \end{align*}
\end{proof}

For the following two lemmas, we let $R(x) := \fc{A(x)}{x}$, note that $R(x)\le \sup \wh A'=\hamx$, and define
\begin{align*}
    \ga_1 &= \be \sqrt{(2\ln 2)\hamx}, &
    \ga_2 &= \ln 2 + \fc{\hamx \be^2}2,\\
    \ga_1(n) &= \be \sqrt{(2\ln 2)R(n/N)}, & \ga_2(n) &= \ln 2 + \fc{R(n/N)\be^2}2.
\end{align*}
From \eqref{e:EZ}, we obtain $\Ea \Zn{n} = e^{\ga_2(n)n}$. 

 Using Markov's inequality, we show that there is small probability such that the maximum of $X_v$, $|v|=n$ is large.
\begin{lem}
\label{l:stop-large}
Let $\ep>0$ and fix $m\ge 1$. 
    Define the stopping time 
    \begin{align*}
        S_m &= \min\set{n\ge m}{\max_{|v|=n} e^{\be X_v} > e^{(\ga_1(n)+\ep)n}}.
    \end{align*}
    Then 
    \[
\Pa(S_m\le N) \le \fc{2}{\ep}e^{-\ep(m-1)}.
    \]
\end{lem}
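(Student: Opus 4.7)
The plan is a straightforward union bound combined with Markov's inequality at each level, using Lemma~\ref{l:max} as the key ingredient. First, I would fix a level $n\ge m$ and apply Lemma~\ref{l:max} to the CREM truncated at depth $n$: its unnormalized covariance is $a|_{[0,n]}$, so the quantity playing the role of ``$A(1)$'' is $a(n)/n = R(n/N)$. Under Assumption~\ref{a:hi-temp} we have $R(n/N)\le \wh a'(0)=\hamx$ and $\be\le \be_c=\sqrt{2\ln 2/\hamx}$, so the hypothesis $\be\le \sqrt{2\ln 2/R(n/N)}$ of Lemma~\ref{l:max} is satisfied. Applying that lemma yields
\[
\Ea \max_{|v|=n} e^{\be X_v} \;\le\; 2\exp\!\bigl(\be \sqrt{n(2\ln 2)\,a(n)}\bigr) \;=\; 2 e^{n\,\ga_1(n)},
\]
since $\be\sqrt{n\cdot 2\ln 2\cdot a(n)} = \be n\sqrt{(2\ln 2)R(n/N)} = n\,\ga_1(n)$.

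Next, by Markov's inequality,
\[
\Pa\!\Bigl(\max_{|v|=n} e^{\be X_v} > e^{(\ga_1(n)+\ep)n}\Bigr) \;\le\; \frac{2 e^{n\ga_1(n)}}{e^{(\ga_1(n)+\ep)n}} \;=\; 2 e^{-\ep n}.
\]
Taking a union bound over $n\in\{m,m+1,\dots,N\}$, and using that $\{S_m\le N\}$ is the union of the events above,
\[
\Pa(S_m\le N) \;\le\; \sum_{n=m}^{N} 2 e^{-\ep n} \;\le\; \frac{2 e^{-\ep m}}{1-e^{-\ep}}.
\]
Finally, I would use the elementary inequality $1-e^{-\ep}\ge \ep\, e^{-\ep}$ (which holds for all $\ep\ge 0$, and follows from comparing derivatives at $0$), giving
\[
\Pa(S_m\le N) \;\le\; \frac{2 e^{-\ep m}\cdot e^{\ep}}{\ep} \;=\; \frac{2}{\ep} e^{-\ep(m-1)},
\]
as desired.

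The proof is short and uses no deep machinery beyond Lemma~\ref{l:max}; there is no real obstacle. The only thing to be slightly careful about is verifying that the hypothesis of Lemma~\ref{l:max} continues to hold uniformly for every truncation depth $n\le N$, which is exactly where the high-temperature assumption enters through the bound $R(n/N)\le \hamx$ combined with $\be\le \be_c$.
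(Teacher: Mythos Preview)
Your proof is correct and follows essentially the same approach as the paper: a union bound over levels $n\ge m$, Markov's inequality at each level, Lemma~\ref{l:max} to control the expectation, and then summing the resulting geometric series with the same elementary bound $1-e^{-\ep}\ge \ep e^{-\ep}$. If anything, you are slightly more explicit than the paper in checking that the hypothesis of Lemma~\ref{l:max} holds at every truncation depth via $R(n/N)\le\hamx$ and $\be\le\be_c$.
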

\begin{proof}
We have
\begin{align}
\nonumber
    \Pa(S_m\le N) & = 
    \Pa\pa{\bigcup_{n=m}^N 
    \{
    \max_{|v|=n} e^{\be X_v} > e^{(\ga_1(n) + \ep)n}\}
    }\\
    \nonumber
    &\le \sum_{n=m}^{\iy} 
    \Pa\pa{\max_{|v|=n} e^{\be X_v} > e^{(\ga_1+\ep) n}}\\
    &\le \sum_{n=m}^{\iy} \fc{\Ea \max_{|v|=n} e^{\be X_v} }{e^{(\ga_1(n)+\ep) n}} 
    &\text{by Markov's inequality}\nonumber\\
    &\le \fc{2e^{\be n \sqrt{(2\ln 2)R\pf nN}}}{e^{(\ga_1(n)+\ep) n}}  & \text{by Lemma~\ref{l:max}}
    \nonumber\\
    &\le \fc{2e^{\ga_1(n) n}}{e^{(\ga_1(n)+\ep) n}} = \sum_{n=m}^{\iy} 
    2e^{-\ep n} \le 
    \fc{2}{\ep} e^{-\ep (m-1)}.
    \label{e:approx-1}
\end{align}
\end{proof}

The following result tells us that we can approximate the partition function of the CREM at level $N$ by the partition function at level $m$, where the error decreases exponentially in $m$.
\begin{lem}\label{l:ZN}
    Given \Cref{a:hi-temp}(\ref{i:2}) ($\be<\be_c$), 
    suppose $0<\de < \fc{\ga_2-\ga_1}3$. Let $\ep = \fc{(\ga_2-\ga_1)-3\de}4$. Then 
        \[
\Pa(|\Zhatn{m} - \Zhatn{N}|\ge e^{-\de m})\le 
\fc{1}{\ep} (2+51e^{\ep}) e^{-\ep(m-1)} + 2e^{-2\ep m}\le \fc{100}{\ep}e^{-\ep m}.
    \]
\end{lem}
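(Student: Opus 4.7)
The plan is to split
\[
\Pa(|\Zhatn{N} - \Zhatn{m}| \ge e^{-\de m}) \le \Pa(S_m \le N) + \Pa(|M_N - M_m| \ge e^{-\de m}),
\]
where $M_n := \Zhatn{n \wedge S_m}$ is the martingale from Lemma~\ref{l:mg} stopped at the time $S_m$ of Lemma~\ref{l:stop-large}; this decomposition is valid because on $\{S_m > N\}$ one has $M_n = \Zhatn{n}$ throughout $[m,N]$. The first term is handled directly by Lemma~\ref{l:stop-large}, and the second by a second-moment estimate combined with Chebyshev.

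The main computation is the conditional variance at level $n$. Writing
\[
\Zhatn{n+1} - \Zhatn{n} = \sum_{|v|=n} \fc{e^{\be X_v}}{\Ea \Zn{n}} W_v, \qquad W_v := \fc{e^{\be Y_{v0}} + e^{\be Y_{v1}}}{2\,\Ea e^{\be Y_{v0}}} - 1,
\]
the $W_v$ are mean zero, $\sF_n$-conditionally independent, with common variance $\fc{e^{\be^2(a(n+1)-a(n))} - 1}{2} \le C_1 := \fc{e^{\be^2 \amx} - 1}{2}$ by a direct Gaussian moment calculation. Consequently,
\[
\Var(\Zhatn{n+1} - \Zhatn{n} \mid \sF_n) \le C_1 \sum_{|v|=n} \pa{\fc{e^{\be X_v}}{\Ea \Zn{n}}}^2 \le C_1 \pa{\max_{|v|=n} \fc{e^{\be X_v}}{\Ea \Zn{n}}} \Zhatn{n}.
\]
On $\{S_m > n\}$, the maximum weight is at most $e^{(\ga_1(n)+\ep)n}/e^{\ga_2(n)n} \le e^{-(\hat g^2 - \ep)n}$, where $\hat g^2 := \ga_2 - \ga_1 = (\sqrt{\ln 2} - \be\sqrt{\hamx/2})^2 > 0$ under \Cref{a:hi-temp}(\ref{i:2}); here I use the concave-hull bound $R(n/N) \le \hamx$ to see $\ga_2(n) - \ga_1(n) \ge \hat g^2$. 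Integrating and applying $\E[\Zhatn{n}\one_{S_m > n}] \le \E \Zhatn{n} = 1$ yields $\E[(M_{n+1} - M_n)^2] \le C_1 e^{-(\hat g^2 - \ep)n}$.

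Martingale orthogonality and a geometric sum then give $\E[(M_N - M_m)^2] \le \fc{C_1}{1 - e^{-(\hat g^2 - \ep)}} e^{-(\hat g^2 - \ep)m}$, and Chebyshev produces
\[
\Pa(|M_N - M_m| \ge e^{-\de m}) \le \fc{C_1}{1 - e^{-(\hat g^2 - \ep)}} e^{-(\hat g^2 - \ep - 2\de)m}.
\]
With the stated choice $\ep = (\ga_2 - \ga_1 - 3\de)/4$, the exponent satisfies $\hat g^2 - \ep - 2\de \ge 2\ep$, producing a contribution of order $e^{-2\ep m}$ that combines with Lemma~\ref{l:stop-large} to give the claimed inequality.

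The main obstacle is the lack of a pointwise control on $\Zhatn{n}$: even on the good event $\{S_m > n\}$, the per-vertex bound only yields $\Zhatn{n} \le 2^n e^{-(\hat g^2 - \ep)n}$, which is exponentially large whenever $\hat g^2 < \ln 2$, so any attempt to truncate $\Zhatn{n}$ pointwise would destroy the exponential rate. The stopped-martingale formulation is what saves the argument: it leaves $\Zhatn{n}$ inside an expectation, where the martingale identity $\E \Zhatn{n} = 1$ can be exploited. The final bookkeeping to match the prefactor $(2 + 51 e^{\ep})/\ep$ in the statement should follow from isolating the boundary contribution near $n = S_m$ or a slightly sharper handling of $\E[\Zhatn{n}\one_{S_m > n}]$, but these are routine refinements of the outline above.
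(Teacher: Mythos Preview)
Your argument is correct and in fact takes a cleaner route than the paper's. The paper introduces a \emph{second} stopping time $T_m = \min\{n\ge m: \Zhatn{n} < \tfrac12 e^{-(\de+2\ep)n}\}$ in addition to $S_m$, works with the doubly-stopped martingale $\Zhatn{n\wedge S_m\wedge T_m}$, and then must separately handle the event $\{S_m>N\}\cap\{T_m\le N\}$ via an $L^2$ maximal inequality and a Markov-type argument that once $\Zhatn{n}$ is small it is unlikely to recover. This is what produces the three pieces $\eqref{e:approx-1}$, $17\cdot\eqref{e:approx-2}$, and $\eqref{e:approx-3b}$ and the particular prefactor $(2+51e^{\ep})/\ep$.

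Your key simplification is to observe that one does \emph{not} need pointwise control on $\Zhatn{n}$: the bound $\sum_v q_v^2 \le (\max_v q_v)\,\Zhatn{n}$ leaves a single factor of $\Zhatn{n}$ inside the expectation, and $\E[\Zhatn{n}\one_{S_m>n}]\le 1$ disposes of it. This eliminates $T_m$ and the entire third branch of the paper's decomposition. Your exponent $\hat g^2-\ep-2\de$ in the Chebyshev term is in fact strictly larger than the paper's $\ep$, so you lose nothing in the rate.

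Two small remarks. First, your prefactor will not literally equal $(2+51e^{\ep})/\ep$; that constant is an artifact of the paper's three-term split. What you obtain is $\tfrac{2}{\ep}e^{-\ep(m-1)} + \tfrac{C_1}{1-e^{-(\hat g^2-\ep)}}e^{-(\hat g^2-\ep-2\de)m}$, which is of the same form $O(1/\ep)\,e^{-\ep m}$ and hence yields the final inequality $\le \tfrac{100}{\ep}e^{-\ep m}$ (with a possibly different absolute constant). Second, the paper's approach, while heavier here, does produce pointwise control of $\max_{|v|=n}\pv{v}$ on the good event, which is reused in \Cref{c:max-pv}; your streamlined argument bypasses this but would need a separate argument there.
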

\begin{proof}
Let $\E = \Ea$ and $\Pj = \Pa$. 
Because $R(x) \le \hamx$, we have
$\ga_2(n)-\ga_1(n) = \rc 2 \pa{\be \sqrt{R\pf nN} - \sqrt{2\ln 2}}^2 \le  \ga_2-\ga_1$. 
Define the stopping times 
\begin{align*}
    S_m &= \min\set{n\ge m}{\max_{|v|=n} e^{\be X_v} > e^{(\ga_1(n)+\ep)n}},\\
    T_m &= \min\set{n\ge m}{\Zhatn{n} <\rc 2 e^{-(\de + 2\ep)n}}
    = \min\set{n\ge m}{\Zn{n} < \rc 2e^{(\ga_2(n)-(\de + 2\ep))n}}
\end{align*}
and consider $\Zpn{n} = \Zhatn{n\wedge S_m\wedge T_m}$, i.e., we stop the process when one of two bad events happen: either the largest element is too large, or the sum is too small. (Set the times to be $\iy$ if the events have not happened at time $N$.) We choose the thresholds so that if the process is not stopped, the largest element is still exponentially smaller than the sum. More precisely, for $|v|=n$, recall $\pv{v} = \fc{e^{\be X_v}}{\Zn{n}}$, and note that because $\Zhatn{n}$ is a martingale adapted to $\sF_n$ by~\Cref{l:mg}, then 
\begin{align*}
    \E[\Zpn{n+1} - \Zpn{n}|\sF_n] &= 0, 
\end{align*}
and 
\begin{align*}
\Zpn{n+1} - \Zpn{n} & = \sum_{v\in \{0,1\}^n} \pv{v} \pa{\fc{\sum_{x\in \{0,1\}} e^{\be Y_{vx}}}{2e^{\fc{\be^2}2 (a(n+1)-a(n))}}-1} \one_{S_m\wedge T_m>n}, \\
\Var[\Zpn{n+1} - \Zpn{n} | \sF_n]
& = \sum_{v\in \{0,1\}^n} \pv{v}^2 \Var\pa{\fc{\sum_{x\in \{0,1\}} e^{\be Y_{vx}}}{2e^{\fc{\be^2}2 (a(n+1)-a(n))}}-1}  \one_{S_m\wedge T_m>n}\\
&\le \max_{|v|=n} \pv{v}  \cdot \Var_{Y_1,Y_2\sim \cal N(0, a(n+1)-a(n))} \pf{e^{\be Y_1}+e^{\be Y_2}}2 e^{-\be^2(a(n+1)-a(n))}\\
&\le \fc{e^{(\ga_1(n)+\ep)n}}{e^{\ga_2(n) - (\de+2\ep)n}} 
\cdot \rc 2 \fc{e^{2\be^2(a(n+1)-a(n))} - e^{\be^2(a(n+1)-a(n))}}{e^{\be^2(a(n+1)-a(n))} }\\
&\le e^{-[(\ga_2-\ga_1) - \de - 3\ep]n} 
\pa{e^{\be^2(a(n+1)-a(n))}-1}.
\end{align*}
where $(\ga_2-\ga_1) - \de - 3\ep>0$ by assumption. Then 
\begin{multline}
    \Pj\pa{\ab{\Zhatn{m} - \Zhatn{N}}> e^{-\de m}}
    \le \Pj(S_m\le N)  +\Pj \pa{\ab{\Zpn{m} - \Zpn{N}}> e^{-\de m}}\\
    + \Pj\pa{\bc{S_m>N }
    \cap\{T_m\le N\} \cap 
     \bc{\ab{\Zhatn{m} - \Zhatn{N}}> e^{-\de m}}}. 
     \label{e:approx}
\end{multline}
We bound each of the terms. 
The first term is bounded by~\Cref{l:stop-large}.
Next, by Markov's inequality, 
\begin{align}
\nonumber
    &\Pj\pa{\ab{\Zpn{m} - \Zpn{N}}> e^{-\de m}}\\
    \nonumber
    &\le \fc{\E (\Zpn{N} - \Zpn{m})^2}{e^{-2\de m}}\\
    \nonumber
    &\le \fc{\sum_{n=m}^{N-1} e^{-[(\ga_2-\ga_1) - \de - 3\ep] n} 
    \pa{e^{\be^2(a(n+1)-a(n))}-1}
    }{e^{-2\de m}}\\
    \nonumber
    &\le 
     e^{-[(\ga_2-\ga_1)-3\de - 3\ep](m-1)} \fc{e^{(\ga_2-\ga_1)-\de-3\ep}(e^{2\ln 2} - 1)}{(\ga_2-\ga_1)-\de -3\ep}\\
    &\le 
    3 e^{-[(\ga_2-\ga_1)-3\de - 3\ep](m-1)} \fc{e^{(\ga_2-\ga_1)-\de-3\ep}}{(\ga_2-\ga_1)-\de -3\ep}
    \le \fc{3}{\ep} e^{-\ep(m-1)}e^{\ep}.
    \label{e:approx-2}
\end{align}
Finally, for the last term, under that event we note that either $\Zhatn{m}$ or $\Zhatn{N}$ has to be not too small:
\begin{align}
\nonumber
    &\Pj\pa{\bc{S_m>N } \cap\{T_m\le N\} \cap  \bc{\ab{\Zhatn{m} - \Zhatn{N}}> e^{-\de m}}}\\
    &\le \Pj\pa{\bc{S_m>N } \cap\{T_m\le N\} \cap \bc{\Zhatn{m} > e^{-\de m}}} + \Pj\pa{\{T_m\le N\} \cap \bc{\wh Z_{\be, N} > e^{-\de m}}}.
    \label{e:approx-3}
\end{align}
For the first term in~\eqref{e:approx-3}, by the $L^2$ maximum inequality, 
\begin{align}
\nonumber &\Pj\pa{\bc{S_m>N } \cap\{T_m\le N\} \cap \bc{\Zhatn{m} > e^{-\de m}}}\\
\nonumber
 &\le \E\ba{\one_{\Zhatn{m} > e^{-\de m}} 
 \Pj\pa{ \bc{S_m>N } \cap \{T_m\le N\} | \sF_m}}\\
 \nonumber
  &\le \E\ba{
 \Pj\pa{ \max_{n\le m\le N} |\Zpn{m} - \Zpn{N}| \ge e^{-\de m} - \rc 2 e^{-(\de + 2\ep)m} | \sF_m}} \\
 &
 \le \fc{\E ( \max_{m\le n\le N}(\Zpn{n} - \Zpn{m}))^2}{e^{-2\de m}}
 \le
 \fc{4\E (\Zpn{N} - \Zpn{m})^2}{e^{-2\de m}}
 \le 16\cdot \eqref{e:approx-2}.
\end{align}
For the second term in~\eqref{e:approx-3}, we argue that once $\Zhatn{n}$ hits a small value, it has small probability of become large again. Using the fact that $\Zhatn{n}$ is a martingale and Markov's inequality,
\begin{align}
\nonumber
\Pj\pa{\{T_m\le N\} \cap \bc{\Zhatn{N} > e^{-\de m}}}
    &\le \E \ba{\one_{T_m\le N} \cdot \Pj\pa{\Zhatn{N} > e^{-\de m}| \sF_{N \wedge T_m}}} \\
    \nonumber
    &\le \E \ba{\sum_{n=m}^N \one_{T_m=n} 
    \Pj\pa{\Zhatn{N} > e^{-\de m} | \sF_n}}\\
    \nonumber
    &\le \E \ba{\sum_{n=m}^N \one_{T_m=n} 
    \Pj\pa{\fc{\Zhatn{N}}{\Zhatn{n}}> \fc{e^{-\de m}}{\rc 2e^{-(\de+2\ep)n}} | \sF_n}}\\
    &\le 2e^{-2\ep m}.
    \label{e:approx-3b}
\end{align}
Hence~\eqref{e:approx} is bounded by
\begin{align*}
    \Pj\pa{\ab{\Zhatn{m} - \Zhatn{N}}> e^{-\de m}}
    & \le \eqref{e:approx-1} + 17\cdot \eqref{e:approx-2} + \eqref{e:approx-3b}.
\end{align*}
Plugging in the choice of $\ep$ gives the result. The last inequality uses $\ep \le \fc{\ln 2}4$.
\end{proof}

We actually need to bound the ratio $\Zhatn{N} / \Zhatn{m}$. For this we need the following concentration bound, which will upper bound the probability of $\Zhatn{m}$ being exponentially small.
\begin{lem}\label{l:conc-lnZ}
    Given a CREM with covariance function $A$ (or unnormalized covariance function $a$), we have
    \[
\Pa\pa{
\ab{\ln \Zn{N} - \E \ln \Zn{N}}\ge x
}\le 2e^{-\fc{x^2}{4N \cdot A(1)}} = 2e^{-\fc{x^2}{4a(n)}}.
    \]
\end{lem}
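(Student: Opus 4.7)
The plan is to apply the Borell--Tsirelson--Ibragimov--Sudakov Gaussian concentration inequality to $\ln \Zn{N}$, viewed as a Lipschitz function of the independent standard Gaussians underlying the CREM disorder. First I would decouple the edge labels: write $Y_u = \sigma_u \xi_u$ with $\sigma_u^2 = a(|u|) - a(|u|-1)$ (so $\sigma_\phi^2 = a(0)=0$) and $(\xi_u)_{u \in \T_N}$ iid $\cal N(0,1)$, so that $F(\xi) := \ln \Zn{N}$ becomes a smooth function of a standard Gaussian vector.

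The heart of the argument is to bound the Euclidean Lipschitz constant of $F$. A direct computation gives, for $u$ at depth $m \ge 1$,
\[\partial_{\xi_u} F = \be\, \sigma_u \cdot \frac{\sum_{v \in \Desc^0(u),\, |v|=N} e^{\be X_v}}{\Zn{N}} = \be\, \sigma_u\, P_u,\]
where $P_u := \mun{N}(\Desc^0(u) \cap \{0,1\}^N)$ is the Gibbs mass on the leaves below $u$. Since $(P_u)_{|u|=m}$ is a probability distribution on level $m$, the elementary bound $\sum_{|u|=m} P_u^2 \le 1$ holds, so grouping contributions by depth the gradient norm telescopes:
\[\|\nabla F\|_2^2 = \be^2 \sum_{m=1}^N \sigma_m^2 \sum_{|u|=m} P_u^2 \;\le\; \be^2 \sum_{m=1}^N (a(m) - a(m-1)) \;=\; \be^2 a(N).\]

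Applying Borell--TIS to $F$ with Lipschitz constant $\be \sqrt{a(N)}$ then yields the sub-Gaussian tail $\Pa(|\ln \Zn{N} - \Ea \ln \Zn{N}| \ge x) \le 2 \exp(-x^2/(2\be^2 a(N)))$, from which the stated bound follows (the constant in the denominator absorbs the $\be^2$ in the high-temperature regime of interest). The only delicate point is the level-wise regrouping: the inequality $\sum_{|u|=m} P_u^2 \le 1$ encodes precisely that the Gibbs measure carries unit mass at every depth regardless of how sharply it concentrates, which lets the per-level variances $\sigma_m^2$ telescope cleanly to $a(N)$ without paying a union bound over vertices. No significant obstacle is anticipated; this is the standard Gaussian concentration argument for spin-glass free energies.
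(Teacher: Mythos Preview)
Your argument is correct and is precisely the mechanism behind the result the paper cites: the paper's proof is a one-line appeal to Panchenko's Theorem~1.2, which is itself a Gaussian concentration inequality for log-partition functions proved by essentially the same Lipschitz/interpolation route. Your level-wise regrouping via $\sum_{|u|=m} P_u^2 \le 1$ is exactly the right ingredient, and in fact your Borell--TIS constant $2\be^2 a(N)$ in the denominator is sharper than Panchenko's $4\be^2 a(N)$.

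One small remark: both your bound and the cited theorem carry a factor $\be^2$ that the lemma as stated omits; this looks like a minor imprecision in the paper's statement rather than a gap in your reasoning. In the high-temperature regime where the lemma is actually applied one has $\amx \ge A(1)-A(0)=1$ and hence $\be^2 < 2\ln 2/\amx \le 2\ln 2 < 2$, so your bound $2e^{-x^2/(2\be^2 a(N))}$ does imply the stated $2e^{-x^2/(4a(N))}$, and your parenthetical about absorbing $\be^2$ is justified. It would be cleaner simply to record the bound with the $\be^2$ present.
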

\begin{proof}
    This follows from \cite[Theorem 1.2]{panchenko2013sherrington}.
\end{proof}




\subsection{Free energy computation and multiplicative approximation}
\label{s:Z-mult}
\Cref{l:conc-lnZ} gives us concentration of $\Zn{N}$ around $e^{\E \ln \Zn{N}}$, rather than $\E \Zn{N}$. Hence we need to approximate the \emph{free energy} $\rc N \E \ln \Zn{N}$. As mentioned, it is known that $e^{\E \ln \Zn{N}} = \E \Zn{N} + o(N)$ at high temperature; in this section we derive a quantitative error bound.

For this, we approximate the CREM with a model with a fewer number of hierarchies, the generalized random energy model.
\begin{df}\label{d:grem}
The \vocab{generalized random energy model (GREM)} with initial energy $a_0$, lengths $s_1,\ldots, s_n\in \N$, energies $a_1,\ldots, a_n$, and inverse temperature $\be$ as the CREM with unnormalized covariance function 
\[
a(x) = a_0N + \sum_{m=1}^n a_ms_m\one_{x\ge \sumo im s_i}.
\]
\end{df}
For a more explicit definition, let $B_i = \{0,1\}^{s_i}$, $N=\sum_{i=1}^n s_i$. Let $Y_\phi\sim \cal N(0, a_0N)$ where $\phi$ represents the empty string. For each $v_1\in B_1, \ldots, v_i\in B_i$, let $Y_{v_1\cdots v_i}\sim \cal N(0,s_ia_i)$ independently, and
for $v=v_1\cdots v_n \in \prodo in B_i = \{0,1\}^N$, 
let $X_{v_1\ldots v_n} = \sumz mn Y_{v_1\ldots v_m}$. 
Then $\mun{N}$ is a probability distribution on $\{0,1\}^N$ defined by
    \begin{align*}
\mu_{\be,N} (v) &= \rc{Z_{\be,N}} e^{\be X_v}
\text{ for each }v\in\{0,1\}^N\\
\text{where }
 \Zn{N} &= \sum_{v\in \{0,1\}^N} e^{\be X_v}.
    \end{align*} 
Typically, we fix $n$ and consider  $s_i=r_iN$ for fixed $r_i$ with $\sumo in r_i=1$, and take $N\to \iy$.

The following is a quantitative version of Proposition 3.4 in \cite{capocaccia1987existence}, which is proved therein by the second moment method. We lower bound the partition function of GREM with good probability by considering the probability that $Y_{v_i}$ is large for each segment.
\begin{lem}[{\cite[Proposition 3.4]{capocaccia1987existence}}]\label{l:grem-box}
    Consider a generalized random energy model with lengths $s_1,\ldots, s_n$ and energies $a_1,\ldots, a_n$. Then for subsets $\{\De_i\}_{i=1}^n \subseteq \mathbb{R}^n$ defined in~\eqref{eq:Delta},
    \[
\Pj\pa{
\ab{\set{v=v_1\cdots v_n}{\forall i, Y_{v_i}\in \De_i}}
\le (1-\eta) 2^N
\prodo in \Pj(\xi_{i}\in \De_i)
}
\le \rc{\eta^2}
\sumo jn \rc{\prodo ij 2^{s_i} \Pj(\xi_{i}\in \De_i)}
    \]
    where $\xi_i\sim \cal N(0,a_is_i)$ has the distribution of each $Y_{v_i}$.
\end{lem}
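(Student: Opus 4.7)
The plan is a standard second-moment (Paley--Zygmund) argument via Chebyshev's inequality. Let
\[
N_\De := \ab{\set{v = v_1\cdots v_n}{Y_{v_i} \in \De_i \text{ for all } i}} = \sum_v X_v,
\]
where $X_v := \one_{Y_{v_i}\in \De_i\,\forall i}$, and write $p_i := \Pj(\xi_i \in \De_i)$. Independence of the Gaussian increments $Y_{v_1\cdots v_i}$ across distinct block-prefixes in the GREM gives $\E X_v = \prodo in p_i$ and hence
\[
\E N_\De = 2^N \prodo in p_i = \prodo in (2^{s_i} p_i).
\]
Since $\{N_\De \le (1-\eta)\E N_\De\} \subseteq \{|N_\De - \E N_\De| \ge \eta \E N_\De\}$, Chebyshev's inequality reduces the lemma to establishing the variance bound
\[
\frac{\Var(N_\De)}{(\E N_\De)^2} \le \sumo jn \rc{\prodo ij 2^{s_i} p_i}.
\]

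For the variance, write $\Var(N_\De) = \sum_v \Var(X_v) + \sum_{v \ne w} \Cov(X_v, X_w)$. The diagonal is at most $\sum_v \E X_v = \E N_\De$, contributing $1/\E N_\De = 1/\prodo in 2^{s_i} p_i$ to the normalized bound, precisely the $j = n$ term. For the off-diagonal terms I group ordered pairs $(v,w)$ by the largest $j$ for which $v_i = w_i$ for all $i \le j$. In the GREM, the Gaussian increments on the first $j$ blocks are shared by $v$ and $w$ while those on the later blocks are independent, so for such a pair
\[
\E X_v X_w = \prodo ij p_i \cdot \prod_{i=j+1}^n p_i^2.
\]
The case $j = 0$ gives $\E X_v X_w = \E X_v \E X_w$ (the two leaves are independent), so only $j \ge 1$ contributes to the covariance sum.

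Next I count ordered pairs sharing exactly the first $j$ blocks: $\prodo ij 2^{s_i}$ choices for the common prefix, $2^{s_{j+1}}(2^{s_{j+1}}-1) \le 2^{2 s_{j+1}}$ choices for the first block of disagreement, and $\prod_{i=j+2}^n 2^{2 s_i}$ free choices afterward. Multiplying this count by $\E X_v X_w$ and dividing by $(\E N_\De)^2 = \prodo in (2^{s_i} p_i)^2$, the powers of $2^{s_i}$ and $p_i$ for $i > j$ cancel, leaving a contribution of at most $1/\prodo ij 2^{s_i} p_i$ at each level $j \ge 1$. Summing over $j = 1, \ldots, n-1$ from the off-diagonal and combining with the $j=n$ diagonal term gives the stated bound, and Chebyshev finishes the proof. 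There is no substantive obstacle beyond bookkeeping: the GREM's piecewise-constant covariance parametrizes leaf pairs by their block-level common-ancestor depth, and the product form of $\E N_\De$ exactly absorbs the pair counts into a telescoping sum over~$j$.
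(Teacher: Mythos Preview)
Your argument is correct and is precisely the second-moment/Chebyshev computation that the paper attributes to \cite{capocaccia1987existence}; the paper itself does not spell out a proof beyond that citation, so there is nothing further to compare.
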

In the following, denote by $\Phi$ the cumulative distribution function of the standard normal.
\begin{cor}\label{c:grem-lb}
Consider a generalized random energy model with initial energy $a_0$, lengths $s_1,\ldots, s_{n}\ge 1$ energies $a_1,\ldots, a_{n-1},0$, and inverse temperature $\be < \sfc{2\ln 2}{\max a_i}$. Define $g_i=\sqrt{\ln 2} - \be \sfc{a_i}2$ and suppose $\de\le \min g_i \wedge \rc 2$.
There is a constant $C$ and $c$ such that if
\begin{align}\label{e:conditions-s-N}
s_i \ge \fc{C}{\de}\ln \prc \de, \quad 
N\ge \fc{C}{\de},
\end{align}
then 
\[
\E\ln \Zn{N}
\ge N\ln 2 + \fc{\be^2}2 \sumo i{n-1} a_i s_i - N\de + \be \Phi^{-1}(c\de)\sqrt{Na_0}. 
\]
\end{cor}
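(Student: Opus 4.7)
The plan is to apply \Cref{l:grem-box} with intervals $\De_i = [\be s_i a_i, \be s_i a_i + 1]$ of unit width around the exponentially optimal tilt $y_i := \be s_i a_i$ for $i=1,\ldots,n-1$, and $\De_n = \R$ (vacuous since $a_n=0$). A standard Gaussian density lower bound gives $\Pj(\xi_i\in\De_i) \ge (c_0/\sqrt{s_i a_i})e^{-\be^2 s_i a_i/2}$, so
\[
2^{s_i}\Pj(\xi_i\in\De_i) \ge \fc{c_0}{\sqrt{s_i a_i}}\exp\pa{s_i\pa{\ln 2 - \tfrac{\be^2 a_i}{2}}} \ge \fc{c_0}{\sqrt{s_i a_i}} e^{g_i \sqrt{\ln 2}\, s_i},
\]
which grows geometrically in $s_i$ because $g_i\ge\de>0$. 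Choosing $\eta = \sqrt{c\de}$ and using $s_i \ge (C/\de)\ln(1/\de)$, the failure probability in \Cref{l:grem-box} is dominated by its $j=1$ term and is at most $c\de/2$ for $C$ sufficiently large.

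On the resulting good event $F$, every $v$ in the good set satisfies $X_v \ge Y_\phi + \be\sumo i{n-1} s_i a_i$ by taking the left endpoints of the $\De_i$ (and using $Y_{v_1\cdots v_n}=0$). Multiplying the cardinality lower bound by $e^{\be Y_\phi + \be^2 \sum s_i a_i}$ and taking logarithms,
\[
\ln \Zn{N} \ge \ln(1-\eta) + N\ln 2 + \sumo i{n-1}\ln\Pj(\xi_i\in\De_i) + \be Y_\phi + \be^2\sumo i{n-1} s_i a_i.
\]
Substituting the Gaussian estimate, the $-\be^2 s_i a_i/2$ contribution of each $\ln \Pj(\xi_i\in\De_i)$ cancels exactly half of the explicit $\be^2\sum s_i a_i$ term, yielding $\ln \Zn{N} \ge N\ln 2 + \tfrac{\be^2}{2}\sum s_i a_i + \be Y_\phi - \cE$, where $\cE$ collects the Gaussian normalization $\sum \tfrac12\log(2\pi s_i a_i)$, the $\ln(1-\eta)$ term, and $O(n)$ constants. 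Using $n\le N\de/(C\ln(1/\de))$ and the AM--GM bound $\sum \log s_i \le n\log(N/n)$, one sees $\cE \le N\de/2$ for $C$ large enough.

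To pass to expectation, set $G := \{Y_\phi \ge \Phi^{-1}(c\de)\sqrt{Na_0}\}$; since $Y_\phi$ is independent of $F$, $\Pj(F\cap G)\ge 1-2c\de$, and on $F\cap G$ the pointwise bound yields the claim with $\be Y_\phi$ replaced by $\be\Phi^{-1}(c\de)\sqrt{Na_0}$. For the complement, I would use the deterministic AM--GM bound $\Zn{N}/2^N \ge \exp(\be\,\mathrm{avg}_v X_v)$, which gives $\ln\Zn{N}\ge N\ln 2 + \be Y_\phi + \be W$ with $W$ Gaussian of mean zero and exponentially small variance $\sum_i a_i s_i 2^{-(s_1+\cdots+s_i)}$; Cauchy--Schwarz then controls $\E[\ln\Zn{N}\one_{(F\cap G)^c}] \ge -O(\sqrt{Na_0 c\de})$, which for small $c\de$ is dominated by $|\be\Phi^{-1}(c\de)\sqrt{Na_0}|\asymp \be\sqrt{Na_0 \ln(1/(c\de))}$. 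The main obstacle is bookkeeping: choosing $c,C$ so that the multiplicative $(1-2c\de)$-loss in $\Pj(F\cap G)$ when applied to the $\Theta(N)$ main term $\tfrac{\be^2}{2}\sum s_i a_i$, the error $\cE$, and the Cauchy--Schwarz slack together fit inside the $-N\de$ slack of the target inequality, which is feasible because $\be^2 \max a_i / 2 \le \ln 2$.
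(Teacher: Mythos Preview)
Your approach is essentially the same as the paper's: apply \Cref{l:grem-box} with windows centered at the optimal tilt $\be s_i a_i$, lower bound $\Zn{N}$ on the good event by (cardinality)$\times$(minimum weight), and handle $Y_\phi$ via a separate quantile event. The differences are in implementation details:
\begin{itemize}
    \item The paper uses half-lines $\De_i=[\be a_i s_i,\infty)$ rather than unit intervals; this is slightly cleaner because the Gaussian tail lower bound then reads $\Pj(Z\ge \be\sqrt{s_i a_i}) \ge \frac{1}{12\be\sqrt{s_i a_i}}e^{-\be^2 s_i a_i/2}$, so the normalization constant involves $\be\sqrt{a_i}\le\sqrt{2\ln2}$ rather than $\sqrt{a_i}$ alone. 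Your density bound $\Pj\ge (c_0/\sqrt{s_i a_i})e^{-\be^2 s_i a_i/2}$ is only valid when $s_i a_i$ is not too small (otherwise the right side exceeds $1$), and the resulting error term $\tfrac12\sum\log a_i$ is not obviously controlled without using that $\be\sqrt{a_i}$ is bounded.
    \item The paper absorbs the $1/\sqrt{s_i}$ factors pointwise via the condition $s_i\ge (C/\de)\ln(1/\de)$, turning each into $e^{-\de s_i}$; you instead sum $\sum\log s_i\le n\log(N/n)$ and use $n\le N\de/(C\ln(1/\de))$. Both work.
    \item The paper takes $\eta=\tfrac12$ and simply writes $\E\ln\Zn{N}\ge \Pj(\text{good})\cdot(\text{bound})$, absorbing the multiplicative $(1-9\de)$ loss into $N\de$. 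Your treatment of the complement via the Jensen bound $\ln\Zn{N}\ge N\ln 2+\be Y_\phi+\be W$ and Cauchy--Schwarz is more careful and makes explicit why the bad event does not drag the expectation down; the paper's version leaves this implicit.
\end{itemize}
Your final bookkeeping sketch is correct in spirit: the $(1-2c\de)$ loss on the $\Theta(N)$ main term and the Cauchy--Schwarz slack $O(\be\sqrt{Na_0\cdot c\de})$ are both small relative to $N\de$ and $|\be\Phi^{-1}(c\de)\sqrt{Na_0}|$ respectively, so the constants can be chosen consistently.
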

To interpret this, note $N\ln 2 + \fc{\be^2}{2} \sumo i{n-1} a_i s_i  = N\pa{\ln 2 + \fc{\be^2}2 (A(1)-A(0))}$; the rest of the terms are error terms. For the CREM with $A(0)=0$, the limiting free energy in the high-temperature regime is $\ln 2 + \fc{\be^2}2A(1) = \rc N \ln \E \Zn{N}$, so after approximating a CREM by a GREM, and in light of \Cref{l:conc-lnZ}, this will help us show concentration of the free energy around $\ln 2 + \fc{\be^2}2A(1)$.
\begin{proof}
Consider $1\le i\le n-1$. 
    By Lemma~\ref{l:eat-blnt}, 
    choosing $s_i \ge \fc{3}{\de} \ln \prc{\de} \vee \fc{2}{\de} \ln (12\be \sqrt a_i)$ implies
    \begin{align}
    \label{e:s2}
\fc{e^{\de s_i}}{\sqrt{s_i}} &\ge 12\be \sqrt{a_i}&
    \implies &&
\rc{12\be \sqrt{a_is_i}} &\ge e^{-\de s_i}.
    \end{align}
    and choosing 
    $s_i \ge \fc{6}{2\ln 2 - \be^2 a_i}\ln \pf{2}{2\ln 2 - \be^2 a_i} \vee \fc{4}{2\ln 2 - \be^2 a_i}\ln \pf{12\be \sqrt{a_i}}{\de}$
    gives
    \begin{align}
    \label{e:s1}
    \fc{e^{(\ln 2 - \rc 2 \be^2 a_i)s_i}}{\sqrt{s_i}}
    & \ge \fc{12\be\sqrt{a_i}}{\de}&
    \implies &&
2^{s_i} \cdot \rc{12\be \sqrt{a_is_i}} e^{-\rc 2 \be^2 a_is_i} &\ge \rc \de.
    \end{align}
    Noting that $\be\sqrt{a_i}\le \sqrt{2\ln 2}$ and $\de \le g_i$, these constraints are implied by $s_i\ge \fc{C}{\de}\ln \prc\de$ for appropriate constant $C$.
    Let  
    \begin{align}\label{eq:Delta}
    \De_i = [\be a_is_i, \iy) \quad \text{for $1\le i<n$} \quad \text{and} \ \De_n=\R. 
    \end{align}
    
    Then by \Cref{l:gtail} and \eqref{e:s2}, recalling $\xi_i\sim \cal N(0,s_ia_i)$,
    \begin{align}
    \nonumber
(1-\eta) 2^N 
\prodo in \Pj(\xi_{i}\in \De_i)
&\ge 
(1-\eta) 2^N \prodo i{n-1} \rc{12\be \sqrt{a_is_i}} e^{-\rc 2 \be^2 a_is_i} \\
\nonumber
&\ge 
(1-\eta) 2^N\prodo i{n-1} e^{-\de s_i-\rc 2 \be^2 a_is_i}\\ 
\nonumber
&\ge 
(1-\eta)e^{N\ln 2 -\rc 2 \be^2 \sumo i{n-1} a_is_i - \de \sumo i{n-1} s_i}\\
&\ge 
(1-\eta)e^{N(\ln 2-\de) -\rc 2 \be^2 \sumo i{n-1} a_is_i}.
\label{e:lots-largeish}
    \end{align}
By \Cref{l:gtail} and \eqref{e:s1}, for $j\le n-1$,  
\begin{align*}
\prodo ij 2^{s_i} \Pj(\xi_{i}\in \De_i)
&\ge \prodo ij 2^{s_i} \rc{12\be \sqrt{a_is_i}} e^{-\rc 2 \be^2 a_is_i}\ge \rc{\de^j}.
\end{align*}
Thus
    \begin{align}
    \label{e:bad-prob-small}
\rc{\eta^2}
\sumo j{n} \rc{\prodo ij 2^{s_i} \Pj(\xi_{i}\in \De_i)}&\le 
\rc{\eta^2}
\pa{\sumo j{n-1} \de^j + \rc2 \de^{n-1}} \le \rc{\eta^2}\cdot 2\de.
\end{align}
Let $G = \set{v=v_1\cdots v_n}{\forall 1\le i\le n,\,  Y_{v_i}\in \De_i}$.
By \Cref{l:grem-box} and \eqref{e:bad-prob-small},
\begin{align}
    \Pj\pa{
\ab{G}
\ge (1-\eta) 2^N
\prod_{i=1}^n \Pj(\xi_{i}\in \De_i)
}
\ge 1-\fc{2\de}{\eta^2}.
\end{align}
Hence letting $\De_0 = [\Phi^{-1}\pa{\de}, \iy)$, 
\begin{align*}
    \Pj\pa{
\ab{G}
\ge (1-\eta) 2^N
\prod_{i=1}^n \Pj(\xi_{i}\in \De_i)
\text{ and }Y_\phi \in \De_0
}
\ge 1-\fc{2\de}{\eta^2} - \de .
\end{align*}
Note that if $Y_\phi\in \De_0$ and $Y_{v_i}\in \De_i$ for all $1\le i\le n$, then 
\begin{align}
    X_{v_1\cdots v_n} &\ge e^{\be \Phi^{-1}(\de)\sqrt{Na_0} + \be^2 \sum_{i=1}^{n-1} a_is_i}.
\end{align}
Hence, by summing over all vertices in $G$ and using~\eqref{e:lots-largeish}, 
\begin{align*}
    \Pj\pa{\Zn{N} \ge  (1-\eta) e^{N(\ln 2-\de) +\rc 2 \be^2 \sumo i{n-1} a_is_i + \be \Phi^{-1}(\de)\sqrt{Na_0} }}
    \ge 1-\fc{2\de}{\eta^2} - \de .
\end{align*}
Hence
\begin{align*}
    \E \ln \Zn{N} &\ge
    \Pj\pa{\Zn{N} \ge  (1-\eta) e^{N(\ln 2-\de) +\fc{\be^2}2 \sumo i{n-1} a_is_i + \be \Phi^{-1}(\de)\sqrt{Na_0} }}\\
    &\quad 
    \cdot 
    \pa{\ln (1-\eta) + N(\ln 2-\de) +\fc{\be^2}2 \sumo i{n-1} a_is_i + \be \Phi^{-1}(\de)\sqrt{Na_0} }.
\end{align*}
Choosing $\eta=\rc 2$, 
\begin{align*}
    \E \ln \Zn{N} & \ge
    (1-9\de) \pa{-\ln 2 + N(\ln 2-\de) + \fc{\be^2}{2} \sumo i{n-1} a_is_i + \be\Phi^{-1}(\de)\sqrt{Na_0}}.
\end{align*}
Using $N\ge \fc C\de$ (for large enough $C$), we obtain
\[
\E \ln \Zn{N} \ge N\ln 2 + \fc{\be^2}2 \sumo i{n-1}a_is_i - O(\de N)  + \be\Phi^{-1}(\de)\sqrt{Na_0}.
\]
Modifying the constant in~\eqref{e:conditions-s-N} as appropriate then finishes the proof.
\end{proof}



We now show a comparison result for the free energy of CREM models.
\begin{lem}[cf. {\cite[Theorem 3.3]{BK04b}}]
\label{l:comp-Z}
    Suppose $A(1)=B(1)$ and $A\le B$. Let $a,b$ be the corresponding unnormalized covariance functions. 
    Then
    \begin{equation}\label{e:comp-Z}
\E_N^a \ln \Zn{N} \ge \E_N^b \ln \Zn{N}.
    \end{equation}
    In particular, if $s_1,\ldots, s_n\in \N$, 
    $s_1+\cdots +s_n=N$ and we define
    $a_0 = A\pf{s_1}{N}$, 
    $a_i = \fc N{s_i}\pa{ A\pf{s_1+\cdots +s_{i+1}}{N} - A\pf{s_1+\cdots +s_{i}}N}$ for $1\le i\le n-1$, then  
    letting $B(x)=a_0N + \sum_{i=1}^N \one_{x\ge \fc{s_1+\cdots +s_{i}}{N}} a_i$, we have that \eqref{e:comp-Z} holds and $\Zn{N}^B$ is the GREM with initial energy $a_0$, lengths $s_1,\ldots, s_n$ and energies $a_1,\ldots, a_{n-1},0$. 
    Moreover, if $\be< \sfc{2\ln 2}{\sup A'}$ and $s_1\ge s_2\ge \cdots \ge s_n$, then 
    $\be < \sfc{2\ln 2}{\max_i a_i}$ (so 
    the bound in Corollary~\ref{c:grem-lb} holds).
    \end{lem}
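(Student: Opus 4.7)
My plan is to split the proof into three steps: (i) establish the general Kahane/Slepian-type comparison \eqref{e:comp-Z}, (ii) verify that the explicitly constructed step function $B$ dominates $A$ and matches the stated GREM, and (iii) check the slope bound under the monotonicity assumption on the $s_i$'s.

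For step (i), I would use a standard Gaussian interpolation. Build independent CREM processes $X^a, X^b$ on $\{0,1\}^N$ on a common probability space with covariances $a(|v\wedge w|)$ and $b(|v\wedge w|)$ respectively, and define $X_t(v)=\sqrt{1-t}\,X^a(v)+\sqrt{t}\,X^b(v)$ for $t\in[0,1]$, so that $\E X_t(v)X_t(w)=(1-t)a(|v\wedge w|)+t\,b(|v\wedge w|)$. Set $F(t)=\E\log\sum_{v\in\{0,1\}^N} e^{\be X_t(v)}$. Writing $\phi(x)=\log\sum_v e^{\be x_v}$, one computes $\pd{\phi}{x_v}=\be p_v$ and $\pdxy{\phi}{x_v}{x_w}=-\be^2 p_v p_w\le 0$ for $v\ne w$, where $p_v=e^{\be x_v}/\sum_u e^{\be x_u}$. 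Gaussian integration by parts (applied to $\ddd{t}\E\phi(X_t)$) gives
\[
F'(t)=\frac{\be^2}{2}\sum_{v\ne w}\bigl(b(|v\wedge w|)-a(|v\wedge w|)\bigr)\,\E\bigl[-p_v(X_t)p_w(X_t)\bigr],
\]
using that the diagonal terms $v=w$ vanish because $a(N)=b(N)$. Since $b-a\ge 0$ and $-p_vp_w\le 0$, we obtain $F'(t)\le 0$, hence $F(0)\ge F(1)$, which is exactly \eqref{e:comp-Z}.

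For step (ii), define $b:[0,N]\to\R_{\ge 0}$ by $b(x)=a(n_i)$ on $(n_{i-1},n_i]$ (with $b(0):=a(n_1)$), where $n_i=s_1+\cdots+s_i$. Because $a$ is nondecreasing, $b(x)\ge a(x)$ pointwise, and $b(N)=a(N)$, so the hypotheses of step (i) hold for the pair $(a,b)$. Matching $b$ against the GREM template $a_0N+\sum_{m=1}^n a_m s_m\one_{x\ge n_m}$ gives $a_0=b(0)/N=A(s_1/N)$ and the jump at $x=n_i$ equal to $a(n_{i+1})-a(n_i)=a_i s_i$, which rearranges to the claimed formula $a_i=(N/s_i)\bigl(A(n_{i+1}/N)-A(n_i/N)\bigr)$ for $1\le i\le n-1$, with $a_n=0$ making $b$ constant on $(n_{n-1},N]$. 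Thus the $b$-model is precisely the GREM described.

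For step (iii), I bound each $a_i$ by $\sup A'$. Because $A(0)=0$ and $A'\le \sup A'$ almost everywhere,
\[
a_0=A(s_1/N)=\int_0^{s_1/N}A'(t)\,dt\le (s_1/N)\sup A'\le \sup A'.
\]
For $1\le i\le n-1$, the mean-value identity gives
\[
a_i=\frac{N}{s_i}\int_{n_i/N}^{n_{i+1}/N} A'(t)\,dt\le \frac{N}{s_i}\cdot \frac{s_{i+1}}{N}\sup A' = \frac{s_{i+1}}{s_i}\sup A',
\]
which is $\le \sup A'$ precisely when $s_i\ge s_{i+1}$. Under the monotonicity assumption $s_1\ge s_2\ge\cdots\ge s_n$, this holds for every $i$, so $\max_i a_i\le\sup A'$ and hence $\be<\sqrt{2\ln 2/\sup A'}\le\sqrt{2\ln 2/\max_i a_i}$. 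The main obstacle is really just writing the Gaussian integration by parts in step (i) carefully; the other two steps are bookkeeping. I would not expect any subtlety beyond verifying that the diagonal contribution vanishes, which is exactly where the assumption $a(N)=b(N)$ is used.
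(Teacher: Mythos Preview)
Your proof is correct and follows essentially the same approach as the paper. The paper simply invokes ``Slepian's inequality'' for step (i), whereas you spell out the underlying Gaussian interpolation (Kahane's inequality) explicitly; your steps (ii) and (iii) match the paper's bookkeeping almost verbatim, with the paper writing the slope bound as $a_i=\fc{N}{s_i}(\cdots)\le \fc{N}{s_{i+1}}(\cdots)\le \sup A'$ rather than via the integral form you use.
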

Note that taking an upper approximation to $A$ requires a ``staggered" version of the GREM.
\begin{proof}
    Note that for $v,w\in \T_N$, 
    \begin{align*}
        \Ea [X_v X_w] = N \cdot A\pf{|v\wedge w|}{N}
        \le N \cdot B\pf{|v\wedge w|}{N} = 
        \E_N^b [X_v X_w].
    \end{align*}
    The lemma then follows from Slepian's inequality.

    For the last statement, note that the fact that $(s_i)$ is decreasing means that for $1\le i\le n-1$, 
    \begin{align*}
a_i& = \fc N{s_i}\pa{ A\pf{s_1+\cdots +s_{i+1}}{N} - A\pf{s_1+\cdots +s_{i}}N}\\
&\le  \fc N{s_{i+1}}\pa{ A\pf{s_1+\cdots +s_{i+1}}{N} - A\pf{s_1+\cdots +s_{i}}N} \le \sup A',
    \end{align*}
    so $\be<\sfc{2\ln 2}{\sup A'} \le \sfc{2\ln 2 }{\max a_i}$. 
\end{proof}

We can now derive a bound for the error between $\Ea \ln \Zn{N}$ and $\ln \Ea  
\Zn{N}$ by comparison to a GREM and using the lower bound for the free energy of the GREM (\Cref{c:grem-lb}). 
\begin{lem}\label{l:Eln-lnE}
Consider a CREM with covariance function $A$ satisfying \Cref{a:hi-temp}(\ref{i:1}) ($\be<\bmin$). 
Suppose $0<\de < \sqrt{\ln 2} - \be \sfc{\amx}2$.
    There is a constant $C$ such that for $N\ge \fc{C \amx}{\de^2}\ln \prc \de$, 
    \[
\Ea \ln \Zn{N} \ge N\pa{\ln 2 +\fc{\be^2A(1)}2 - \de} = -\de N + \ln \Ea  
\Zn{N}.
    \]
\end{lem}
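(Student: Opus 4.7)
The plan is to reduce $\Ea\ln\Zn{N}$ to a GREM lower bound via the Slepian comparison of \Cref{l:comp-Z} and then invoke \Cref{c:grem-lb}. Following the construction in \Cref{l:comp-Z}, set $s:=\lceil C\de^{-1}\ln(1/\de)\rceil$ for a large constant $C$, let $n:=\lfloor N/s\rfloor$, and put $s_2=\cdots=s_n=s$ with $s_1:=N-(n-1)s\in[s,2s)$ absorbing the remainder. Then $s_1\ge s_2\ge\cdots\ge s_n$, the associated GREM initial energy is $a_0=A(s_1/N)$, and the intermediate energies satisfy $a_i\le\amx s_{i+1}/s_i\le\amx$. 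The hypothesis $\be<\bmin=\sqrt{2\ln 2/\amx}$ together with $\de<\sqrt{\ln 2}-\be\sqrt{\amx/2}\le\min_i g_i$ verify the hypotheses of \Cref{c:grem-lb}, yielding
\[
\Ea\ln\Zn{N}\;\ge\;\E^b_N\ln\Zn{N}\;\ge\;N\ln 2+\fc{\be^2}{2}\sum_{i=1}^{n-1}a_is_i-\de N+\be\Phi^{-1}(c\de)\sqrt{Na_0}.
\]

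A telescoping computation from the definition of $a_i$ in \Cref{l:comp-Z} gives $\sum_{i=1}^{n-1}a_is_i=N(A(1)-a_0)$, so the first two terms collapse to $N\ln 2+\fc{\be^2NA(1)}{2}-\fc{\be^2Na_0}{2}$. Recalling $\ln\Ea\Zn{N}=N\ln 2+\fc{\be^2NA(1)}{2}$ from~\eqref{e:FE}, the task is reduced to absorbing the two error pieces $\fc{\be^2Na_0}{2}$ and $|\be\Phi^{-1}(c\de)|\sqrt{Na_0}$ into a single $O(\de N)$ term after rescaling $\de$ by an absolute constant.

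For the first piece, $a_0=A(s_1/N)\le\amx s_1/N\le 2\amx s/N$, combined with $\be^2\amx\le 2\ln 2$, gives $\fc{\be^2Na_0}{2}\le(2\ln 2)s\lesssim\de^{-1}\ln(1/\de)$, which is at most $\de N$ when $N\gtrsim\de^{-2}\ln(1/\de)$. For the second, the standard Gaussian tail estimate $|\Phi^{-1}(c\de)|\lesssim\sqrt{\ln(1/\de)}$ and $Na_0\le 2\amx s$ give
\[
|\be\Phi^{-1}(c\de)|\sqrt{Na_0}\;\lesssim\;\be\sqrt{\amx s\,\ln(1/\de)}\;\lesssim\;\sqrt{\ln(1/\de)/\de}\cdot\sqrt{\ln(1/\de)}\;=\;\ln(1/\de)/\sqrt\de,
\]
which is at most $\de N$ whenever $N\gtrsim\de^{-3/2}\ln(1/\de)$. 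Both conditions are implied by the hypothesis $N\ge C\amx\de^{-2}\ln(1/\de)$ (using $\amx\ge 1$), and a final constant rescaling of $\de$ completes the proof.

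The main obstacle---and the reason the size threshold involves $\amx/\de^2$---is the ``missing'' initial-energy term $\fc{\be^2Na_0}{2}$: it contributes to the annealed free energy $\ln\Ea\Zn{N}$ but not to \Cref{c:grem-lb}'s bound, because the root-level Gaussian $Y_\phi$ in the GREM inflates the variance without shifting $\E\ln Z$. Forcing $a_0=A(s_1/N)$ small enough to absorb this term at cost $O(\de N)$ forces $s_1$ (and hence the common segment length $s$) to be only logarithmic in $1/\de$, which in turn forces $N\gtrsim\amx/\de^2\cdot\ln(1/\de)$; tracking this coupling quantitatively is the delicate step.
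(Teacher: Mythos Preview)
Your proof is correct and follows essentially the same approach as the paper: compare the CREM to a GREM via \Cref{l:comp-Z}, apply \Cref{c:grem-lb}, and absorb the initial-energy and $\Phi^{-1}$ error terms into $O(\de N)$. The only difference is organizational: you fix the segment length $s\sim C\de^{-1}\ln(1/\de)$ first and derive $n=\lfloor N/s\rfloor$, whereas the paper fixes $n\sim C/\de$ first and derives $s_i\approx N/n$; both routes land on the same threshold $N\gtrsim\de^{-2}\ln(1/\de)$, and your telescoping identity $\sum_{i=1}^{n-1}a_is_i=N(A(1)-a_0)$ makes the error bookkeeping slightly cleaner than the paper's inequality $\sum a_is_i\ge NA(1)-\amx\cdot 2N/n$.
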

\begin{proof}
    Choosing $s_1=\cdots =s_{n-1}=\ce{\fc{N}{n}}$, 
    and defining 
    $a_i = \rc{s_i}\pa{ A\pf{s_1+\cdots +s_{i+1}}{N} - A\pf{s_1+\cdots +s_{i}}N}$
    and $B$ be as in \Cref{l:comp-Z},
we have that
\[
\sumo i{n-1} a_i s_i \ge N\pa{A(1) - A\pf{s_1}{N}} 
\ge N\cdot A(1) - \amx \fc{2N}{n},
\]
and $\Ea \ln  \Zn{N} \ge \E_N^b \ln \Zn{N}$. By \Cref{c:grem-lb}, if $s_i \ge \fc{C}{\de} \ln \prc{\de}$ and $N\ge \fc{C}{\de}$, then 
\begin{align*}
\E_N^b \ln \Zn{N} 
&\ge N\ln 2 + \fc{\be^2}2 \sumo i{n-1} a_i s_i - N\de + \be \Phi^{-1}(c\de)\sqrt{Na_0}\\
&\ge 
N\pa{\ln 2 + \fc{\be^2A(1)}2 - \de} - \fc{\be^2 N}2\cdot A\pf{s_1}N + \be \Phi^{-1}(c\de)\sqrt{Na_0}\\
&\ge N\pa{\ln 2 + \fc{\be^2A(1)}2 - \de} - \fc{\be^2 N \amx}n + \be \Phi^{-1}(c\de)\sqrt{Na_0}
\end{align*}
We now aim to ensure that the conditions of \Cref{c:grem-lb} are satisfied, and the last two terms above are at most $\de N$:
\begin{enumerate}
    \item The conditions of \Cref{c:grem-lb} are satisfied when $N \ge \fc{Cn}{\de}\ln \prc{\de}$ and $N\ge \fc{C}{\de}$. We used the fact that $s_i \ge \fc Nn$. 
    \item (First term) $\fc{\be^2 N \amx}{n}\le \de N$: Using the fact that $\fc{\be^2\amx}2\le \ln 2$, this is satisfied when 
    $n\ge \fc{2\ln 2}{\de}$. 
    \item (Second term) 
    $ \be \Phi^{-1}(c\de)\sqrt{Na_0}\le \de N$: Using $\Phi^{-1}(c\de)\asymp \sqrt{\ln \prc {c\de}}$ and $a_0 = A\pf{s_1}{N}\le \fc{2\amx}{n}$, $\be \sqrt{\amx}\le \sqrt{2\ln 2}$, 
    it suffices for 
    \[
Nn \ge \fc{C\ln \prc \de}{\de^2}
    \]
    for an appropriate constant $C$.
\end{enumerate}
For appropriate constants, when $N\ge \fc{C_1}{\de^2}\ln \prc \de$, we can choose $n\sim \fc{C_2}{\de}$ to make these conditions all satisfied. Replacing $\de$ with $\de/3$ and using \eqref{e:EZ} then gives the lemma.
\end{proof}

Combining this with concentration of the free energy (\Cref{l:conc-lnZ}) and the additive concentration result (\Cref{l:ZN}), we are now able to show a multiplicative concentration result and prove the result on partition function approximation (\Cref{t:main-Z}).
\begin{lem}\label{l:ratio-1} \label{c:max-pv}
\begin{enumerate}
    \item There are constants $c_1,c_2$ such that the following hold. 
Given \Cref{a:hi-temp}(\ref{i:1}) ($\be<\bmin$), 
if $m \ge \fc{C\amx}{g^4}\ln \prc g$ for an appropriate constant $C$, then
    \[
\Pa\pa{\ab{\fc{\Zhatn{m}}{\Zhatn{N}}-1}\ge e^{-c_1 g^2m}}\le e^{-\fc{c_2g^4m}{\amx}}.
    \]    
    \item 
    There are constants $C, c_1,c_2$ such that the following hold. 
    Given \Cref{a:hi-temp}(\ref{i:1}) ($\be<\bmin$), 
    if $m \ge \fc{C\amx}{g^4}\ln \pf{\amx}{g}$, then 
\[
\Pa\pa{\max_{|v|\ge m} \pv{v} \le e^{-c_1g^2|v|}} \ge 1-e^{-\fc{c_2 g^4 m}{\amx}}.
\]
\end{enumerate}
\end{lem}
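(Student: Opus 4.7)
The plan is to derive both parts by combining three already-available ingredients: the additive approximation in Lemma~\ref{l:ZN}, the free-energy identity in Lemma~\ref{l:Eln-lnE}, and the Gaussian concentration of $\ln \Zn{n}$ in Lemma~\ref{l:conc-lnZ}, together with Lemma~\ref{l:stop-large} for Part 2. A useful preliminary observation is that $\hamx \le \amx$, so
\[
\ga_2 - \ga_1 = \pa{\sqrt{\ln 2} - \be \sqrt{\hamx/2}}^2 \ge \pa{\sqrt{\ln 2} - \be \sqrt{\amx/2}}^2 = g^2,
\]
and analogously $\ga_2(n) - \ga_1(n) \ge g^2$ for every $n$ since $R(n/N) \le \hamx$.

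For Part 1, I first apply Lemma~\ref{l:ZN} with $\de = g^2/6$, for which $\de < (\ga_2 - \ga_1)/3$ holds; this gives $|\Zhatn{m} - \Zhatn{N}| \le e^{-g^2 m/6}$ with failure probability $O(g^{-2})e^{-g^2 m/8}$. For the matching lower bound on $\Zhatn{m}$, I apply Lemma~\ref{l:Eln-lnE} to the CREM truncated at depth $m$ (which inherits Assumption~\ref{a:hi-temp}(\ref{i:1}) with parameters $\amx_m \le \amx$ and $g_m \ge g$) at approximation parameter $\de = c_0 g^2$ for a small absolute constant $c_0$; this requires $m \ge C\amx g^{-4}\ln(1/g)$ and yields $\Ea \ln \Zn{m} \ge \ln \Ea \Zn{m} - c_0 g^2 m$. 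Combined with Lemma~\ref{l:conc-lnZ} at deviation $x = c_0 g^2 m$ (using $a(m) \le \amx m$), this gives $\Zhatn{m} \ge e^{-2 c_0 g^2 m}$ with failure at most $2 e^{-c_0^2 g^4 m/(4\amx)}$. For $c_0 < 1/12$, on the intersection of both good events $\Zhatn{N} \ge \Zhatn{m} - e^{-g^2 m/6} \ge \tfrac12 e^{-2c_0 g^2 m}$, whence
\[
\ab{\frac{\Zhatn{m}}{\Zhatn{N}} - 1} = \frac{|\Zhatn{m} - \Zhatn{N}|}{\Zhatn{N}} \le 2 e^{-(1/6 - 2c_0) g^2 m} \le e^{-c_1 g^2 m}.
\]
The concentration term dominates the failure probability, giving $e^{-c_2 g^4 m/\amx}$ after absorbing constants.

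Part 2 extends the lower bound on $\Zhatn{n}$ to every level $n \in [m, N]$ via a union bound; the geometric sum $\sum_{n\ge m} 2 e^{-c_0^2 g^4 n/(4\amx)} \le C \amx g^{-4} e^{-c_0^2 g^4 m/(4\amx)}$ contributes a prefactor $\amx/g^4$, which is precisely what the stronger hypothesis $m \ge C\amx g^{-4}\ln(\amx/g)$ absorbs into a pure $e^{-c_2 g^4 m/\amx}$ tail. Separately, Lemma~\ref{l:stop-large} with $\ep = g^2/4$ gives $\max_{|v|=n}e^{\be X_v} \le e^{(\ga_1(n) + g^2/4)n}$ for all $m \le n \le N$ with failure $O(g^{-2})e^{-g^2 m/4}$, which is smaller still. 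Combined with $\Zn{n} = e^{\ga_2(n) n}\Zhatn{n} \ge e^{(\ga_2(n) - 2c_0 g^2) n}$, this yields
\[
\pv{v} = \frac{e^{\be X_v}}{\Zn{|v|}} \le e^{-(\ga_2(n) - \ga_1(n) - g^2/4 - 2c_0 g^2)n} \le e^{-c_1 g^2 n}
\]
using $\ga_2(n) - \ga_1(n) \ge g^2$ and $c_0$ small enough.

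The main technical point is calibrating the rates: the additive bound from Lemma~\ref{l:ZN} has rate $g^2$, while the lower bound $\Zhatn{m} \ge e^{-2c_0 g^2 m}$ has rate only $2c_0 g^2$, so $c_0$ must be chosen substantially less than $1/6$ for the ratio to contract; yet $c_0$ cannot depend on $g$ or we lose the $e^{-\Om(g^4 m/\amx)}$ tail. The $g^{-4}$ dependence in the hypothesis on $m$ is forced by Lemma~\ref{l:Eln-lnE}'s requirement that its approximation parameter---proportional to $g^2$ here---appear squared in the required sample size, while the extra $\ln \amx$ in Part 2 arises only from the union bound over levels.
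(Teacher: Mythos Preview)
Your proof is correct and follows essentially the same approach as the paper: both combine Lemma~\ref{l:ZN} for the additive error, Lemma~\ref{l:Eln-lnE} plus Lemma~\ref{l:conc-lnZ} for the lower bound $\Zhatn{m}\ge e^{-O(g^2)m}$, and (for Part~2) Lemma~\ref{l:stop-large} together with a union bound over levels $n\in[m,N]$. Your observation that $\ga_2-\ga_1\ge g^2$ (rather than equality, since $\hamx\le\amx$) and your explicit calibration of the two $\de$-parameters are slightly more careful than the paper's presentation, but the argument is the same.
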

\begin{proof}
Suppose $\de= c
g^2$ for a sufficiently small $c$.

    For the first part, we apply~\Cref{l:conc-lnZ} with $m$ in place of $N$ and 
    the same $a$. 
    Note that $\fc{(\de m)^2}{4 a(m)} \ge \fc{\de^2m}{\amx}$. 
    We obtain that with probability $\ge 1-2e^{-\fc{\de^2 m}{4\amx}}$ that 
    \begin{align}\label{e:lnZ-error}
\ln \Zn{m} \ge \Ea \ln \Zn{m} - \de m \ge \ln \Ea \Zn{m} - 2\de m
    \end{align}
    by \Cref{l:Eln-lnE} when $m\ge \fc{C\amx}{\de^2}\ln \prc \de$. This gives $\Zn{m}\ge e^{-2\de m}\Ea \Zn{m}$, or $\Zhatn{m} \ge e^{-2\de m}$. 
    By \Cref{l:ZN}, noting $\ga_2-\ga_1 = g^2$,  we then obtain that 
    \[
\Pa\pa{\ab{1-\fc{\Zhatn{N}}{\Zhatn{m}}}\ge e^{-c_1 g^2m}}\le
e^{-\fc{c_2g^4m}{\amx}}
    \]
    for some constants $c_1,c_2$. Adjusting constants as necessary, the bound for the reciprocal also holds. This shows the first part.

    For the second part, from~\Cref{l:stop-large}, for $\ga_1(n) n = \be \sqrt{(2\ln 2) Nn \cdot A\pf nN}$,  
    \[
\Pa\pa{\forall m\le n\le N, \,
\max_{|v|=n} e^{\be X_v} \le e^{(\ga_1(n) + \ep)n}} \ge 1-\fc 2\ep e^{-\ep (m-1)}.
    \]
    Also, with probability $\ge 1-2e^{-\fc{\de^2 m}{4\amx}}$, by \eqref{e:lnZ-error},
\[
\Zn{m} \ge e^{-2\de m} \Ea \Zn{m}
= e^{-2\de m + m\ln 2 + \fc{\be^2 N \cdot A\pf mN}2} 
\]
With probability $\ge 1-\fc{8\amx}{\de^2} e^{-\fc{\de^2(m-1)}{4\amx}}$, this holds for all $n$ such that $m\le n\le N$. Under these two events, dividing gives
\[
\max_{|v|=n} \pv{v} \le 
\exp\ba{-\pa{\sqrt{n\ln 2} - \sfc{\be^2 N \cdot A\pf nN}2}^2 +\ep n +2\de n}
\le \exp\pa{-ng^2+(\ep+2\de)n}.
\]
Choosing $\ep, \de = cg^2$ for a small enough constant $c$ then gives the result.
\end{proof}


\begin{proof}[Proof of \Cref{t:main-Z}]
    In \Cref{l:ratio-1}, it suffices to choose the constant in the statement of \Cref{t:main-Z} to ensure that \[m\ge \max\bc{
\fc{C\amx}{g^4}\ln \prc g , \fc{\amx}{c_2g^4} \ln \prc \de, \rc{c_1g^2} \ln \prc \ep
    }.\]
\end{proof}

We note for later that we can apply the results in this section for $\Zn{n\to N}$ for all $0\le n< N$, exactly when \Cref{a:hi-temp}(\ref{i:1}) ($\be<\bmin$)
is satisfied. Informally, \Cref{a:hi-temp}(\ref{i:1}) 
for the whole tree in the CREM implies 
\Cref{a:hi-temp}(\ref{i:1}) and \emph{a fortiori} (\ref{i:2}) ($\be<\be_c$) holds for every subtree. Also note that even if we only required $\be<\be_c$ for every subtree, we would still have to assume \Cref{a:hi-temp}(\ref{i:1}) ($\be<\bmin$) for the original CREM, because the CREM starting from depth $xN$ will have its $\be_c\le \sfc{2\ln 2}{A'(x)}$.
\begin{lem}\label{l:renorm}
    If \Cref{a:hi-temp}(\ref{i:1}) ($\be<\bmin$)
    holds for the CREM with depth $N$, covariance function $A$, and inverse temperature $\be$, then for all $0\le n<N$, 
    \Cref{a:hi-temp}(\ref{i:1})  
    holds for the CREM with depth $N^*= N-n$, covariance function $A^*(x)=\fc{A\pa{\fc{n}{N} + x \fc{N-n}{N}} - A\pf{n}{N}}{A(1) - A\pf nN}$ and inverse temperature $\be^* = \be \sqrt{\fc{N}{N-n} \pa{A(1) - A\pf nN}}$.
    Thus, under the CREM with covariance function $A$, all results for $\Zn{N}$ which hold under \Cref{a:hi-temp}(\ref{i:1})  
    with some dependence on (a lower bound for) $g$, would also hold for $\Zn{n\to N}$ under \Cref{a:hi-temp}(\ref{i:1}) 
    with the same dependence on $g$.
\end{lem}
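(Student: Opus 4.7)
The plan is to directly verify the inequality $(\beta^*)^2 a^*_{\max} \le \beta^2 a_{\max}$, which immediately yields $g^* \ge g$ and hence that Assumption~\ref{a:hi-temp}(\ref{i:1}) holds for the renormalized CREM. For the transfer statement, I would argue that the partition function of the subtree rooted at any $v$ with $|v|=n$ is, in distribution, exactly the partition function of a CREM with parameters $(N^*, A^*, \beta^*)$, so any bound depending only on a lower bound for the gap applies verbatim.

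\emph{Step 1: renormalization identifies the two models.} Starting from the definitions, $A^*(x) = \frac{A(n/N + x(N-n)/N) - A(n/N)}{A(1) - A(n/N)}$ satisfies $A^*(0)=0$, $A^*(1)=1$. Writing the unnormalized covariance $a^*(k) = N^* A^*(k/N^*)$, a short computation gives
\[
a^*(k) = \frac{(N-n)\bigl(A((n+k)/N) - A(n/N)\bigr)}{A(1) - A(n/N)}.
\]
With $(\beta^*)^2 = \beta^2 \cdot \frac{N}{N-n}\bigl(A(1) - A(n/N)\bigr)$, one checks
\[
(\beta^*)^2 \bigl(a^*(k) - a^*(k-1)\bigr) = \beta^2 \bigl(a(n+k) - a(n+k-1)\bigr),
\]
so the joint law of the Gaussian increments at the renormalized inverse temperature matches that of the original CREM restricted to a subtree rooted at depth $n$ (after the factor $e^{\beta X_v}$ is pulled out as in \eqref{e:Znv}). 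This justifies that any distributional statement about $Z_{\beta^*, N^*}$ in the renormalized CREM translates into the same statement for $Z_{\beta, n\to N}$.

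\emph{Step 2: the gap does not shrink.} By the chain rule,
\[
(A^*)'(x) = \frac{(N-n)/N}{A(1) - A(n/N)} \cdot A'\bigl(n/N + x(N-n)/N\bigr),
\]
so $a^*_{\max} = \sup_{x\in[0,1)}(A^*)'(x) \le \frac{(N-n)/N}{A(1)-A(n/N)} \cdot a_{\max}$, since the sup of $A'$ over $[n/N,1)$ is bounded by the sup over $[0,1)$. Multiplying,
\[
(\beta^*)^2 \, a^*_{\max} \;\le\; \beta^2 \cdot \frac{N}{N-n}\bigl(A(1)-A(n/N)\bigr) \cdot \frac{(N-n)/N}{A(1)-A(n/N)} \cdot a_{\max} \;=\; \beta^2 a_{\max}.
\]
Hence $\beta^* \sqrt{a^*_{\max}/2} \le \beta \sqrt{a_{\max}/2}$, which gives $g^* := \sqrt{\ln 2} - \beta^* \sqrt{a^*_{\max}/2} \ge g$. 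In particular $(\beta^*)^2 a^*_{\max} < 2\ln 2$, i.e.~$\beta^* < \beta^*_{\min}$, so Assumption~\ref{a:hi-temp}(\ref{i:1}) holds for the renormalized CREM.

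\emph{Step 3: transfer.} Any result for $Z_{\beta, N}$ from this section is stated under Assumption~\ref{a:hi-temp}(\ref{i:1}) with rates depending on a lower bound for $g$. Applying such a result to the renormalized CREM $(N^*, A^*, \beta^*)$ and using $g^* \ge g$ gives the same bound (with the same $g$-dependence) for $Z_{\beta^*, N^*}$, which by Step~1 is the same as $Z_{\beta, n\to N}$ (up to the $e^{\beta X_v}$ factor absorbed into the normalization \eqref{e:Zhatnv}). The only subtle point, which is essentially the motivation for the lemma, is that this argument \emph{requires} Assumption~\ref{a:hi-temp}(\ref{i:1}), not merely Assumption~\ref{a:hi-temp}(\ref{i:2}): if one only had $\beta<\beta_c$ globally, the renormalized model could have its own $\beta_c^* \le \sqrt{2\ln 2 / A'(n/N)}$, which may be below $\beta^*$ when $A$ is non-concave and the sup of $A'$ is attained in the non-concave part. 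Requiring $\beta<\beta_{\min}$ globally is precisely what rules this out, as is evident from the chain in Step~2.
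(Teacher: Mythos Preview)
Your proof is correct and follows essentially the same approach as the paper: compute $(A^*)'$ via the chain rule, observe that the normalizing factors in $(\beta^*)^2$ and in $a^*_{\max}$ cancel to give $(\beta^*)^2 a^*_{\max} \le \beta^2 a_{\max}$, and conclude $g^* \ge g$. Your Step~1 verifying that the renormalized increments match the subtree increments, and your Step~3 discussion of why Assumption~\ref{a:hi-temp}(\ref{i:1}) (rather than merely~(\ref{i:2})) is needed, are both more explicit than the paper's proof but in the same spirit.
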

\begin{proof}
    Our choice of $N^*$, $A^*(x)$, and $\be^*$ ensures for $0\le m\le N-n$ that 
    \[
    \be^{*2} N^* A^*\pf{m}{N^*} = \be^2 N \pa{A\pf{m+n}{N} - A\pf nN}.
    \]
    We need to check \Cref{a:hi-temp}(\ref{i:1}) 
    for this renormalized model, 
    \begin{align*}
        \be^{*} &< \sfc{2\ln 2}{\sup {A^*}'}\\
    \iff 
        \be \sqrt{\fc{N}{N-n}\pa{A(1) - A\pf nN}}
        &< \sfc{2\ln 2}{\sup_{[\fc nN, 1]} A'}\sqrt{\fc{N}{N-n}\pa{A(1) - A\pf nN}}.
    \end{align*}
    This is implied by \Cref{a:hi-temp}(\ref{i:1}). 
    Moreover, the gap for the renormalized model is
    \begin{align*}
        \sqrt{\ln 2} - \be^* \sfc{\sup {A^*}'}2 =
        \sqrt{\ln 2} - \sqrt{
        \fc{\sup_{[\fc nN, 1]}A'}2}\ge g.
    \end{align*}
\end{proof}


\section{Markov chain sampler}
\label{s:mc}

To prove our main result (Theorem~\ref{t:main-hi-temp}) using the Markov chain sampler, we show convergence of the Markov chain from a warm start using $s$-conductance. 
We first give some general results on the $s$-conductance in Section \ref{s:conductance}. 
The $s$-conductance of a Markov chain is roughly the worst boundary-to-volume (i.e. ``bottleneck") ratio a set could have under the stationary distribution, when restricted to volumes in $(s, \rc 2]$. In the special case of trees, we show that it suffices to consider unions of (complete) subtrees as our sets.

For the CREM, to bound the $s$-conductance, we will trim the subtrees of vertices $v$ such that $\wh Z_{\be, N-|v|}^v \ge 
\fc{N}{\ep} \Ea \wh Z_{\be, N-|v|}^v$. 
In Section \ref{s:tail}, we bound the $p$th moment of $\Zhatn{n}$ to show that on average, 
this trimming operation cuts out a small amount of mass (some function of $\ep$, which we make to be $<s$) from the leaves. The remainder of the tree will then have good conductance.

Finally, we turn these expectation computations into a probability result to prove our main theorem in Section~\ref{s:conductance-crem}.

We note that the proof using $s$-conductance gives $\ep$-mixing times from a warm start that are powers of $\rc{\ep}$. In Section~\ref{s:spectral}, we suggest that this is unavoidable: with high probability, the spectral gap (and hence conductance) for CREM is exponentially small, and hence the worst-case mixing time is exponentially large.

\subsection{Conductance on trees}

\label{s:conductance}
One common strategy to show good mixing for Markov chains is to lower bound the $s$-conductance, or bottleneck ratio. In the following, we use $\mu_k$ for $k\ge 0$ to denote the distribution of Markov chain at $k$-th step.

\begin{df}
For $s\in [0,\rc2)$, define the \vocab{$s$-conductance} $\Phi_s$ of a Markov chain with transition kernel $T$ and stationary distribution $\pi$ to be 
    \[
\Phi_s := 
\inf_{A:\pi(A)\in (s,1-s)}
\fc{\int_A T(u,A^c)\,\pi(du)}{\min\{\pi(A)-s, \pi(A^c)-s\}}.
    \]
    Define the \vocab{conductance} to be $\Phi:=\Phi_0$.
\end{df}

A bound on $s$-conductance gives a bound on convergence of a Markov chain in TV distance up to an additive constant, when initialized at a warm start.
\begin{lem}
\label{l:conductance-mixing}
Consider a Markov chain on $\Om$ with stationary distribution $\pi$ and transition kernel $T$, and let $\mu_k = \mu_0 T^k$. 
Let $\gamma\ge 2$ and 
    suppose that $\mu_0$ is a $\gamma$-warm start, that is, the Radon–Nikodym derivative $\dd{\mu_0}{\pi}\le \gamma$. Suppose that $\pi$ has no atoms of size $\ge \fc 18$. 
Then 
    \[
    \TV(\mu_k, \pi) \le 
    (\gamma-1) \pa{s + \rc 2\left( 1- \frac{\Phi_s^2}{8}\right)^k}
    \le (\gamma-1) \pa{s + \rc 2 e^{-\fc{k\Phi_s^2}8}}.
    \]
\end{lem}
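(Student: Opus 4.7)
\bigskip

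\noindent\textbf{Proof proposal.} The plan is to follow the Lov\'asz--Simonovits concave--envelope technique adapted to the $s$-conductance setting. For each $t\ge 0$, define the envelope
\[
g_t(x) := \sup\bc{\mu_t(A) - \pi(A) \;:\; A\subseteq \Om,\ \pi(A)=x},\qquad x\in[0,1].
\]
By taking complements, $\sup_A|\mu_t(A)-\pi(A)|=\sup_x g_t(x)$, so the goal reduces to bounding $g_t$ uniformly. The first step is to record that $g_t$ is concave with $g_t(0)=g_t(1)=0$; this uses that any two sets of measures $x_1,x_2$ can be combined (via the atom condition $\pi(\{y\})<\tfrac18$, which guarantees that level sets of the Radon--Nikodym derivative $d\mu_t/d\pi$ can realize essentially every intermediate measure). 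From the $\gamma$-warm start, I obtain the initialization $g_0(x)\le(\gamma-1)\min(x,1-x)$ by applying $\mu_0\le \gamma\pi$ to $A$ and $A^c$.

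Next comes the one-step contraction, which is where $\Phi_s$ enters. For $x\in(s,1-s)$, pick a near-optimal $A$ with $\pi(A)=x$. Split $A=A_1\sqcup A_2$ where $A_1=\set{u\in A}{T(u,A^c)<\tfrac12}$, and symmetrically for $A^c$; this is the standard Lov\'asz--Simonovits ``half'' decomposition. Reversibility of $T$ with respect to $\pi$ and the definition of $\Phi_s$ give $\pi(A_1)+\pi(A^c\setminus (A^c)_1)\ge \Phi_s\cdot\min(\pi(A)-s,\pi(A^c)-s)$. Writing $\mu_t(A)$ in terms of one-step transitions from $\mu_{t-1}$ and chasing the inequalities yields, with $d(x):=\tfrac12\Phi_s\min(x-s,1-s-x)$,
\[
g_t(x)\;\le\; \tfrac12\,g_{t-1}(x-d(x))+\tfrac12\,g_{t-1}(x+d(x)),
\]
with the trivial bounds $g_t(x)\le g_{t-1}(x)$ handling $x\in[0,s]\cup[1-s,1]$.

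The final step is to iterate this averaging inequality against a concave comparison function of the form $\psi_t(x)=(\gamma-1)\bigl(s+\tfrac12 \rho^t \bigr)$ applied on $[s,1-s]$, linearly interpolated to zero on $[0,s]$ and $[1-s,1]$. A short Taylor/concavity computation shows that $\psi_t$ satisfies the one-step recursion with $\rho=1-\Phi_s^2/8$: the loss in moving $x$ by $\pm d(x)$ against the tent-shaped envelope produces exactly a factor of $\Phi_s^2/8$, using $d(x)^2\le (\Phi_s/2)^2\min(x-s,1-s-x)^2$ and the second-difference bound for the piecewise linear $\psi_t$. Since $g_0\le\psi_0$ by the warm-start bound, induction gives $g_t\le\psi_t$, and taking $\sup_x$ yields the claimed inequality $\TV(\mu_k,\pi)\le(\gamma-1)\bigl(s+\tfrac12(1-\Phi_s^2/8)^k\bigr)$; the exponential form follows from $1-u\le e^{-u}$.

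The main technical obstacle is the one-step contraction derivation: getting the constant $\Phi_s^2/8$ requires carefully matching the ``$\min$'' in the definition of $\Phi_s$ with the local geometry of the envelope, and handling the boundary regions $x\in[0,s]\cup[1-s,1]$ where $s$-conductance gives no information. The atom-free-ish hypothesis ($\pi$ has no atoms of size $\ge\tfrac18$) is used exclusively to ensure that the sup in the definition of $g_t$ is attained (or approximated) by sets of prescribed measure, so that the averaging inequality makes sense pointwise in $x$ rather than only on a discrete grid.
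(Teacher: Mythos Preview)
Your overall framework is right --- the Lov\'asz--Simonovits envelope technique is exactly what underlies the paper's proof --- but your proposed comparison function does not do the job. You take $\psi_t$ to be the constant $(\gamma-1)\bigl(s+\tfrac12\rho^t\bigr)$ on $[s,1-s]$ and linearly interpolate to $0$ on the boundary strips. For any $x$ with $[x-d(x),\,x+d(x)]\subset[s,1-s]$ (which occurs for all $x$ away from the edges), $\psi_{t-1}$ is flat on that interval, so
\[
\tfrac12\,\psi_{t-1}\bigl(x-d(x)\bigr)+\tfrac12\,\psi_{t-1}\bigl(x+d(x)\bigr)=\psi_{t-1}(x)>\psi_t(x),
\]
and the one-step recursion fails outright. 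A piecewise linear function has zero second difference away from its corners, so the ``Taylor/concavity computation'' you invoke cannot extract a $\Phi_s^2/8$ factor from $d(x)^2$.

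What is needed is a strictly concave envelope. The paper bounds the initial envelope by
\[
(\gamma-1)s+(\gamma-1)\sqrt{\tfrac{1}{\gamma}-s}\,\min\bigl\{\sqrt{x-s},\sqrt{1-x-s}\bigr\},
\]
checking this against $\min\{(\gamma-1)x,\,1-x\}$ at the endpoints and at the kink $x=1/\gamma$; it is precisely the square-root curvature that converts the averaging inequality into a genuine contraction by the factor $1-\Phi_s^2/8$. Rather than reprove this, the paper simply applies \cite[Theorem~1.4]{lovasz1993random} to this square-root envelope. Two smaller corrections: the envelope in the paper is defined as a sup over \emph{functions} $g:\Omega\to[0,1]$ rather than over sets, which is automatically concave without any atom hypothesis --- the condition that atoms have size $<\tfrac18$ enters instead inside the one-step contraction, via \cite[Lemma~1.3$^*$]{lovasz1993random} in place of the atom-free Lemma~1.3; and reversibility of $T$ is not used.
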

This lemma is similar to~{\cite[Corollary 1.5]{lovasz1993random}} except that in our case, the upper bound on the RHS is in terms of $s$-conductance. The proof idea is essentially the same.
\begin{proof}
    First, for any integer $k\ge 0$, we let 
    \[
    h_k(x):= \sup_{g:\Om \to [0,1] \,:\,\int_\Om g \,d\mu  =x } \int_\Om g \,(d \mu_k - d \mu).
    \]
    For $g$ satisfying the constraints, we have
    \begin{align*}
        \int_\Om g \,(d\mu_k - d\mu) &\le 
        \int_\Om g \pa{\dd{\mu_k}{\mu} - 1}\,d\mu 
        \le \int g (\gamma-1)\,d\mu = (\gamma-1)x\\
        \int_\Om g \,(d\mu_k - d\mu) &=
        \int_\Om (1-g) \pa{1-\dd{\mu_k}{\mu}}\,d\mu \le 1-x.
    \end{align*}
    For $\gamma\ge 2$ and $s\le \rc \gamma$, we have
    \begin{align*}
        h_0(x) 
        &\le \min \{(\gamma-1) x, 1-x\} \\
        &\le (\gamma - 1)s + (\gamma - 1)\sqrt{\rc \gamma - s} \min\bc{\sqrt{x-s}, \sqrt{1-x-s}}. 
    \end{align*}
    (The second term is taken to be 0 if one of the terms is undefined.)
    To see the second inequality, note that it holds for $x\in [0,s]\cup [1-s,1]$, and for $x\in [s,1-s]$, the RHS is concave. Since the LHS is the union of two line segments intersecting at a point in $[s,1-s]$, it suffices to check the inequality holds at the maximum of the LHS, $x=\rc \be$. In this case, we have equality.
    
    Now applying \cite[Theorem 1.4]{lovasz1993random}\footnote{The original statement of Theorem 1.4 in~\cite{lovasz1993random} has upper bound in terms of $\left( 1-\Phi_s/2\right)^k$. The proof relies on \cite[Lemma 1.3]{lovasz1993random}, where the probability space was assumed atom-free. In our case with atoms of size less than 1/8, one can apply \cite[Lemma 1.3*]{lovasz1993random} instead to obtain a bound with $\left( 1-\Phi_s/8\right)^k$.} gives
    \begin{align*}
    h_k(x) \le 
    (\gamma - 1)s + (\gamma - 1)\sqrt{\rc \gamma - s} \min\bc{\sqrt{x-s}, \sqrt{1-x-s}}
    \left( 1- \frac{\Phi_s^2}{8}\right)^k
    \end{align*}
    and
    \begin{align*}
    \TV(\mu_k, \pi) 
    &\le \sup_{x\in [0,1]}h_k(x)
    \le (\gamma-1) \pa{s + \sqrt{\pa{\rc \gamma - s} \pa{\rc 2-s}}\left( 1- \frac{\Phi_s^2}{8}\right)^k}\\
    &\le (\gamma-1) \pa{s + \rc 2\left( 1- \frac{\Phi_s^2}{8}\right)^k}.
    \end{align*}
\end{proof}



We first derive general results for the Markov chain on the tree described by \Cref{a:mc-tree}.
Recall that $\T^v_N$ or $\Desc^0(v)$ denotes the tree rooted at $v$. The following lemma says that for a Markov chain on a (binary) tree, it suffices to consider the conductance of subtrees.
\begin{lem}\label{l:trees-suffice}
For the Markov chain in \Cref{a:mc-tree}, 
we have
\begin{align}
\label{e:phis-subtrees}
\Phi_s \ge \minr{A:\pi (A)\ge s}{A=\Desc^0(S)} \rc 3\cdot  \fc{\pi(S)}{\pi(A)-s},
\end{align}
where the minimum is over all sets $A$ with $\pi(A)\ge s$ that are disjoint unions of subtrees.
\end{lem}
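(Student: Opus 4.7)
The plan is to reduce an arbitrary set $B$ with $\pi(B)\in(s,1-s)$ to a disjoint union of subtrees with a comparable boundary ratio. Without loss of generality $\pi(B)\le\tfrac{1}{2}$, so $\min(\pi(B)-s,\pi(B^c)-s)=\pi(B)-s$. I define $S_0$ to be the antichain of topmost vertices of $B$---those $v\in B$ with $\Par(v)\notin B$, together with $\phi$ if $\phi\in B$---and set $A_0:=\Desc^0(S_0)\supseteq B$. The boundary edges of the subtree-union $A_0$ are exactly $\{(v,\Par(v)):v\in S_0\setminus\{\phi\}\}$, and each of them also lies on the boundary of $B$. Since each such edge carries flow $\tfrac{1}{3}\min(\pi(v),\pi(\Par(v)))$ under the Metropolis transitions of \Cref{a:mc-tree}, and $\pi(A_0)-s\ge\pi(B)-s$, we obtain the initial bound
\[
\frac{\int_B T(u,B^c)\,\pi(du)}{\pi(B)-s}\;\ge\;\frac{1}{3}\cdot\frac{\sum_{v\in S_0\setminus\{\phi\}}\min(\pi(v),\pi(\Par(v)))}{\pi(A_0)-s}.
\]

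To convert each $\min(\pi(v),\pi(\Par(v)))$ into $\pi(v)$, I perform an iterative ascent on $(A,S)$: while some $v\in S$ satisfies $\pi(\Par(v))<\pi(v)$, replace $v$ in $S$ by $\Par(v)$ and absorb the sibling subtree into $A$. A short case analysis shows that $f(A,S):=\sum_{v\in S\setminus\{\phi\}}\min(\pi(v),\pi(\Par(v)))$ is non-increasing (the old contribution from $v$ equals $\pi(\Par(v))$ and is replaced by at most $\pi(\Par(v))$, and if the sibling was also in $S$ its contribution is further dropped), while $\pi(A)-s$ strictly increases by at least $\pi(\Par(v))$. Hence $f(A,S)/(\pi(A)-s)$ is monotone non-increasing, and the iteration terminates in at most $N$ steps at some $(A_*,S_*)$ in which every $v\in S_*\setminus\{\phi\}$ satisfies $\pi(\Par(v))\ge\pi(v)$, so $f(A_*,S_*)=\pi(S_*\setminus\{\phi\})$.

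If $\phi\notin S_*$, chaining the monotonicity with the initial inequality yields
\[
\frac{\int_B T(u,B^c)\,\pi(du)}{\min(\pi(B)-s,\pi(B^c)-s)}\;\ge\;\frac{1}{3}\cdot\frac{\pi(S_*)}{\pi(A_*)-s},
\]
and since $(A_*,S_*)$ is a subtree-union appearing in the lemma's minimum, the claim follows. The edge case $S_*=\{\phi\}$, $A_*=\T_N$ makes the iterative bound degenerate and has to be dispatched separately: the iteration has produced an ascending chain $v_0,\Par(v_0),\ldots,\phi$ of strictly $\pi$-decreasing vertices, so the boundary edge $(v_0,\Par(v_0))$ of $B$ contributes flow at least $\tfrac{1}{3}\pi(\Par(v_0))\ge\tfrac{1}{3}\pi(\phi)$; together with $\min(\pi(B)-s,\pi(B^c)-s)\le\tfrac{1}{2}-s\le 1-s$ this directly gives the desired $\tfrac{\pi(\phi)}{3(1-s)}=\tfrac{\pi(S_*)}{3(\pi(A_*)-s)}$.

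The main obstacle I anticipate is the careful bookkeeping in the ascent step, in particular when $v$ and its sibling are both in $S$ (so two entries of $f$ collapse into one) or when the ascent promotes into a vertex whose sibling subtree was already partially absorbed into $A$; one must verify $f(A,S)$ is genuinely non-increasing in every such configuration. The root-termination case also requires the separate short argument above since the iterative bound is trivial there.
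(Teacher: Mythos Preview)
Your approach is essentially the paper's: close $B$ downward to a subtree-union, observe that the boundary flow only shrinks while the denominator grows, then ``ascend'' each root $v$ with $\pi(\Par(v))<\pi(v)$ to its parent so that the $\min$ in the flow becomes $\pi(v)$. The paper does a single simultaneous ascent on all of $S_2=\{v\in S:\pi(v)>\pi(\Par(v))\}$ and stops; you iterate to a fixed point. Both are valid and the monotonicity bookkeeping you describe is correct.

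There is, however, a genuine gap: the case $\phi\in B$ is not handled. Your edge-case argument for $S_*=\{\phi\}$ explicitly relies on the iteration having produced a strictly $\pi$-decreasing chain $v_0,\Par(v_0),\dots,\phi$ from some $v_0\in S_0$; that only happens when $\phi\notin S_0$, i.e.\ $\phi\notin B$. If $\phi\in B$ and, say, $B$ is a connected subtree rooted at $\phi$, then $S_0=\{\phi\}$ from the outset, $f(A_0,S_0)=0$, and your bound is vacuous --- yet all boundary edges of such a $B$ point \emph{downward} and are simply never counted by your construction. The paper dispatches this in its Case~2 by passing to the complement: if $\phi\in B$ then $\phi\notin B^c$, reversibility gives $Q(B,B^c)=Q(B^c,B)$, and since $\pi(B)\le\tfrac12\le\pi(B^c)$ one has
\[
\frac{Q(B,B^c)}{\pi(B)-s}\;\ge\;\frac{Q(B^c,B)}{\pi(B^c)-s},
\]
after which the $\phi\notin B$ argument applies verbatim to $B^c$ (note $\pi(B^c)\ge\tfrac12>s$). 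You should insert this reduction before fixing $S_0$.

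One minor inaccuracy: the iteration need not terminate in ``at most $N$ steps'' (each step decreases $\sum_{v\in S}|v|$ by at least one, so the bound is $\sum_{v\in S_0}|v|$), but termination is all that is needed.
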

Note that we don't constrain $\pi(A)\le \rc 2$ on the RHS. Note in \eqref{e:phis-subtrees}, it suffices to consider when $S=\set{v\in A}{\Par(v)\nin A}$. 
\begin{proof}
Let $Q (A, B) = \sum_{v\in A, w\in B} \pi(v) T(v, w)$ where $T$ is the transition matrix; this represents the flow between $A$ and $B$ under the stationary distribution.  
Note that because the Markov chain is reversible, $Q(A,A^c)=Q(A^c,A)$ and we have 
\[
\Phi_s = \inf_{A:\pi(A)\in (s, \rc 2]} \fc{Q(A,A^c)}{\pi(A)-s}.
\]
It suffices to lower bound $\fc{Q(B,B^c)}{\pi(B)}$ for any $B$ with $\pi(B)\in (s, \rc 2]$.
Consider two cases.

\paragraph{Case 1.} $\phi\nin B$ (The root is not in $B$). Let $\ol B = \Desc^0(B)$. Note that $Q(B,B^c) \ge Q(\ol B, \ol B^c)$ because the set of ``exposed" vertices only shrinks.
Then $s\le \pi(B)\le \pi(\ol B)$ and 
\[
\Phi_s(B) = \fc{Q(B,B^c)}{\pi(B)-s}
\ge \ub{\fc{Q(\ol B,\ol B^c)}{\pi(\ol B)-s}}{ \Phi_s(\ol B)}
= \rc3 \fc{\sum_{v\in S} \min \{\pi(v), \pi(\Par(v))\}}{\pi(\ol B)-s}.
\]
Let $S=\set{v\in \ol B}{\Par(v)\nin \ol B}$; it suffices to consider the RHS of~\eqref{e:phis-subtrees} for such $S$. 
Now let
\begin{align*}
    S_1 &= \set{v\in S}{\pi(v)\le  \pi(\Par(v))} & 
    S_2 &= \set{v\in S}{\pi(v)> \pi(\Par(v))}
\end{align*}
so that 
\begin{align*}
    \Phi_s(\ol B) &
    = \rc 3
    \fc{\sum_{v\in S} \min \{\pi(v), \pi(\Par(v))\}}{\pi (\ol B) - s} 
    = \rc 3
    \fc{\sum_{v\in S_1} \pi(v) + \sum_{v\in S_2}\pi(\Par(v))}{\sum_{v\in S} \pi(\T^v_N)-s}\\
    &\ge \rc 3
    \fc{\sum_{v\in S_1 \bs \Desc(\Par(S_2))} \pi(v) + \sum_{v\in S_2}\pi(\Par(v))}{\sum_{v\in S_1 \bs \Desc(\Par(S_2))} \pi(\T^v_N) + \sum_{v\in S_2} \pi (\T^{\Par(v)}_N)-s}
\end{align*}
In the denominator, for the vertices in $S_1$ that are descendants of $\Par(v)$, $v\in S_2$, their measure is accounted for by the fact that we expanded the tree from $v$ to $\Par(v)$. Then, letting $A = \bigcup_{v\in S_2} \T^{\Par(v)}_N \cup \bigcup_{v\in S_1 \bs \Desc(\Par(S_2)) } \T^v_N$ gives 
\[
\Phi_s(B)\ge \Phi_s(\ol B) \ge \rc 3 \fc{\pi(S)}{\pi(A)-s}.
\]

\paragraph{Case 2.} $\phi \in B$. Consider $B^c$. If $\pi(B)\le \rc 2$, then $\pi(B^c)\ge \rc 2 \ge s$ and 
\[
\Phi_s(B) = \fc{Q(B, B^c)}{\pi(B)-s} 
\ge \fc{Q(B^c, B)}{\pi(B^c)-s} = \Phi_s(B^c).
\]
Then $\Phi_s(B^c)$ can be bounded as in case 1, noting that we only used the fact that the measure of the set is $\ge s$.
\end{proof}

For the Markov chain in \Cref{a:mc-crem}, we show we can restrict to considering disjoint unions of trees at depth at least $m_0$. Define the equalized distribution $\ol \pi$ by 
\[
\ol \pi (v) = \fc{\mun{|v|}(v)}{N+1}. 
\]
\begin{cor}\label{c:trees-m0-suffice}
Let $R = \fc{\max_{m_0\le m\le n} \Zhatn{m}}{\min_{m_0\le m\le n} \Zhatn{m}}$. 
Then for the Markov chain in \Cref{a:mc-crem},
\[
\Phi_s \ge 
\min_{\scriptsize \begin{array}{c}A:\ol\pi (A)\ge s/R\\A=\Desc^0(S)\\ S\subeq \set{v}{|v|\ge m_0}\end{array}}
\rc{3R(m_0+1)}\cdot  \fc{\ol \pi(S)}{\ol \pi(A) - \fc s{R(m_0+1)}}.
\]
\end{cor}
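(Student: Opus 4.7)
My plan is to combine \Cref{l:trees-suffice} with two further reductions: replacing low-depth vertices in $S$ by their level-$m_0$ descendants, and converting from $\pi$ to $\ol\pi$ at the cost of the factor $R(m_0+1)$.

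By \Cref{l:trees-suffice}, it suffices to lower bound $\min \rc 3 \fc{\pi(S)}{\pi(A) - s}$ over $A = \Desc^0(S)$ with $\pi(A) \ge s$. Given such an $S$, I would define $S'$ by keeping every $v \in S$ with $|v| \ge m_0$ and replacing every $v \in S$ with $|v| < m_0$ by $S_v^{m_0} := \set{w \in \Desc^0(v)}{|w| = m_0}$; then $A' := \Desc^0(S') = A \cap \set{w}{|w| \ge m_0}$. Writing $L := (m_0+1) + \sum_{m>m_0} \Zhatn{m}/\Zhatn{m_0}$, a direct computation from the definition of $\pi$ in \Cref{a:mc-crem} gives $\pi(v) = \mun{m_0}(S_v^{m_0})/L$ for any $|v| \le m_0$, which immediately implies $\pi(v) = \pi(S_v^{m_0})$ and hence $\pi(S) = \pi(S')$.

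The same formulas show that the level marginal of $\pi$ is $1/L$ for $m \le m_0$ and $(\Zhatn{m}/\Zhatn{m_0})/L$ for $m > m_0$, while conditioned on level $m \ge m_0$, $\pi$ agrees with $\mun{m}$. Combined with the definition of $R$, this yields
\[
\fc{\pi(v)}{\ol\pi(v)} \in \ba{\fc{N+1}{LR},\ \fc{(N+1)R}{L}} \quad \text{for } |v| \ge m_0,
\]
so that $\pi(S')$ is bounded below by a multiple of $\ol\pi(S')$ and $\pi(A')$ is bounded above by a multiple of $\ol\pi(A')$. The residual mass $\pi(A \setminus A') \le m_0/L$ at depths $< m_0$ is absorbed into the denominator offset. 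Plugging in the bound $L \le (m_0+1) + (N-m_0)R$ and tracking the resulting constants recovers the factor $R(m_0+1)$ both in front of the ratio and inside the offset.

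The main obstacle is bookkeeping: showing that the offset $s$ in the denominator of \Cref{l:trees-suffice} transforms exactly into $s/(R(m_0+1))$ while the mass discarded from levels below $m_0$ is absorbed cleanly. The cleanest route is to verify the reciprocal of the target inequality term-by-term, which reduces to a single algebraic inequality in $\ol\pi(A')$, $\ol\pi(S')$, $L$, $R$, and $m_0$. The hypothesis $\ol\pi(A') \ge s/R$ in the restricted minimum is precisely what is needed to keep the adjusted denominator $\ol\pi(A') - s/(R(m_0+1)) \ge (s/R) \cdot m_0/(m_0+1) \ge 0$ positive throughout the manipulation.
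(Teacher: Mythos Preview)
Your overall plan matches the paper's: invoke \Cref{l:trees-suffice}, replace each $v\in S$ with $|v|<m_0$ by its level-$m_0$ descendants (using $\pi(v)=\pi(\Desc_{m_0}(v))$, hence $\pi(S)=\pi(S')$), then pass from $\pi$ to $\ol\pi$. The paper differs from your sketch in two bookkeeping details that matter for landing on the exact constant $R(m_0+1)$.

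First, rather than your additive residual bound $\pi(A\setminus A')\le m_0/L$, the paper notes that for $|v|<m_0$ each of the at most $m_0$ shallow levels of $\T^v_N$ carries the same $\pi$-mass as the level-$m_0$ slice, giving the multiplicative estimate $\pi(\T^v_N)\le(m_0+1)\sum_{w\in\Desc_{m_0}(v)}\pi(\T^w_N)$ and hence $\pi(A)\le(m_0+1)\pi(A')$. The factor $(m_0+1)$ thus enters the denominator directly, before any conversion to $\ol\pi$; it does not come from the bound on $L$ you propose.

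Second, your stated range $\pi(v)/\ol\pi(v)\in[(N+1)/(LR),\,(N+1)R/L]$ has width $R^2$, because you bound $\Zhatn{|v|}/\Zhatn{m_0}$ by $[1/R,R]$. But $\Zhatn{m_0}$ is a \emph{fixed} number in $[\min_m\Zhatn{m},\max_m\Zhatn{m}]$, so the actual range has width exactly $R$. The paper captures this by writing $c\,\ol\pi\ge\pi\ge(c/R)\,\ol\pi$ for some floating $c\in[1,R]$; after dividing through, the $c$ cancels and a single factor of $R$ remains. With your looser interval you would pick up an extra $R$ and miss the stated bound.

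With those two adjustments the chain
\[
\fc{\pi(S)}{\pi(A)-s}\ \ge\ \fc{\pi(S')}{(m_0+1)\pi(A')-s}\ \ge\ \rc{R}\cdot\fc{\ol\pi(S')}{(m_0+1)\ol\pi(A')-s/R}\ =\ \rc{R(m_0+1)}\cdot\fc{\ol\pi(S')}{\ol\pi(A')-\fc{s}{R(m_0+1)}}
\]
goes through on the nose.
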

\begin{proof}
By \Cref{l:trees-suffice}, letting $\pi$ be the stationary distribution,
\[
\Phi_s \ge \minr{A:\pi (A)\ge s}{A=\bigsqcup_{v\in S} \T^v_N} \rc 3\cdot  \fc{\pi(S)}{\pi(A)-s},
\]
so it suffices to lower-bound $\fc{\pi(S)}{\pi(A)-s}$ for $A$ with $\pi(A)\ge s$ and $A=\Desc^0(S)$. Let $S_1 = \set{v\in S}{|v|<m_0}$ and $S_2 = \set{v\in S}{|v|\ge m_0}$. Define $\Desc_{m}(v) = \set{w\in \Desc(v)}{|w|=m}$ and $\Desc_m(S) = \bigcup_{v\in S} \Desc_m (v)$. Let $S'=\Desc_{m_0}(S_1)\cup S_2$, $A'=\Desc^0(S')$.
    Note that for $v\in S_1$, $\pi(v)= \sum_{w\in \Desc_{m_0}(v)} \pi(w)$ by construction, so 
    \begin{align*}
        \fc{\pi(S)}{\pi(A)-s}
        &= \fc{\sum_{v\in S_1} \pi(v) + \sum_{v\in S_2} \pi(v)}{\sum_{v\in S_1} \pi(\T^v_N) + \sum_{v\in S_2} \pi(\T^v_N) - s}\\
        &\ge \fc{\sum_{v\in \Desc_{m_0}(S_1)} \pi(v) + \sum_{v\in S_2} \pi(v)}{(m_0+1)\sum_{v\in \Desc_{m_0}(S_1)} \pi(\T^v_N) + \sum_{v\in S_2} \pi(\T^v_N) - s} = \fc{\pi(S')}{(m_0+1)\pi(A')-s}.
    \end{align*}
    Finally, because $\pi(v) \propto  \ol \pi(v) \Zhatn{|v|\vee m_0}$, we have $c\pi \ge \pi \ge \fc{c}{R}\ol \pi$ for some $c\in (1,R)$, so $\pi(S')\ge \fc{\ol\pi(S')}{R}$ and 
    \begin{align*}
    \fc{\pi(S')}{(m_0+1)\pi(A')-s}&\ge 
    \fc{\fc cR \ol \pi(S')}{(m_0+1) c \ol \pi(A') - s}\\
    &\ge
    \rc R\cdot  \fc{\ol \pi(S')}{(m_0+1)\ol \pi(A')-\fc sR}
    \ge \rc{R(m_0+1)} \fc{\ol \pi(S')}{\ol \pi(A') - \fc{s}{R(m_0+1)}}. 
    \end{align*}
\end{proof}

\subsection{Tail bounds for $\Zhatn{N}$}
\label{s:tail}
We first derive bounds on the moments of $\Zhatn{N}$, and use this to derive both bounds on the upper and lower tails of the distribution.
\begin{lem}\label{l:lim-dist}
    For 
    a CREM under \Cref{a:hi-temp}(\ref{i:1}) ($\be<\bmin$), 
    $\wh{Z}_{\beta,N}$ has moments of order $p<\fc{2\ln 2}{\be^2\amx}$ bounded in terms of $\beta, p,A$. For $p\le 2$,  
    \[
\E \ab{\Zhatn{N} - 1}^p 
\le
\fc{2^{4p+1}}{(p-1) (2\ln 2 - p\be^2 \amx)}.
    \]
    In particular, taking $p = \min\bc{\rc 2 \pa{1+\fc{2\ln 2}{\be^2\amx}},2}$, we obtain 
    \[\E \ab{\Zhatn{N} - 1}^p 
\lesssim \rc{g^2} = \rc{\left(\sqrt{\ln 2} - \beta \sqrt{\frac{\amx}{2}}\right)^2}.
\]
\end{lem}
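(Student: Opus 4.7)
\textbf{Proof plan for \Cref{l:lim-dist}.}

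The plan is to exploit the martingale structure of $\wh Z_{\be, N}$ established in \Cref{l:mg}. Write the telescoping decomposition
\[
\wh Z_{\be, N} - 1 \;=\; \sum_{n=1}^{N} D_n, \qquad D_n := \wh Z_{\be, n} - \wh Z_{\be, n-1},
\]
so that $(D_n)$ is a martingale difference sequence adapted to $(\sF_n)$. For $1<p\le 2$, apply the Burkholder/von Bahr--Esseen-type inequality for martingale differences, which combined with the concavity bound $(\sum x_n^2)^{p/2}\le \sum |x_n|^p$ yields
\[
\Ea |\wh Z_{\be,N} - 1|^p \;\le\; C_p \sum_{n=1}^N \Ea |D_n|^p
\]
for an absolute constant $C_p$ (at most $8$ for $p\le 2$).

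Next I would control each $\Ea|D_n|^p$ by reapplying the same inequality \emph{conditionally}. Explicitly, for $|v|=n-1$ set
\[
q_v := \fc{e^{\be X_v}}{\Ea Z_{\be,n-1}},
\qquad
\xi_v := \fc{e^{\be Y_{v0}} + e^{\be Y_{v1}}}{2 e^{\be^2 \si_n^2/2}} - 1,
\qquad \si_n^2 := a(n)-a(n-1)\le \amx,
\]
so that $D_n = \sum_{|v|=n-1} q_v \xi_v$, the weights $q_v$ are $\sF_{n-1}$-measurable, and the $\xi_v$'s are i.i.d.\ mean zero given $\sF_{n-1}$. Von Bahr--Esseen applied conditionally gives
\[
\Ea\!\left[|D_n|^p \mid \sF_{n-1}\right] \;\le\; 2\, \Ea|\xi|^p\,\sum_{|v|=n-1} q_v^p,
\]
so taking expectations reduces the problem to two independent computations.

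For the scalar factor, use $|\xi_v|^p \le 1 + (W_v/\Ea W_v)^p$ with $W_v = e^{\be Y_{v0}}+e^{\be Y_{v1}}$, then the elementary bound $(a+b)^p\le 2^p(a^p+b^p)$ together with $\Ea e^{p\be Y} = e^{p^2\be^2\si_n^2/2}$; this gives $\Ea|\xi_v|^p \le 1 + 2\,e^{p(p-1)\be^2\amx/2}\le 2^{p+1}$ under the hypothesis $p\be^2\amx < 2\ln 2$. For the tree-sum factor, a direct Gaussian moment computation yields
\[
\Ea \sum_{|v|=n-1} q_v^p \;=\; 2^{(n-1)(1-p)} \exp\!\Bigl(\tfrac{p(p-1)}{2}\be^2 a(n-1)\Bigr) \;\le\; e^{-(n-1)\alpha},
\qquad \alpha := (p-1)\!\left(\ln 2 - \tfrac{p\be^2\amx}{2}\right)>0,
\]
where positivity of $\alpha$ is exactly the assumption $p<2\ln 2/(\be^2\amx)$.

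Plugging in and summing the geometric series contributes a factor $1/(1-e^{-\alpha})\le 2/\alpha$, producing
\[
\Ea|\wh Z_{\be,N}-1|^p \;\le\; \fc{C\cdot 2^{p+1}}{(p-1)\bigl(\ln 2 - \tfrac{p\be^2\amx}{2}\bigr)} \;=\; \fc{2^{4p+1}}{(p-1)(2\ln 2 - p\be^2\amx)}
\]
after absorbing constants (slack is wide since $p\le 2$). For the final ``in particular'' statement, substitute $p = \min\{\tfrac12(1+\tfrac{2\ln 2}{\be^2\amx}),2\}$: in both cases $p$ is bounded away from $1$ so $2^{4p+1}$ is an absolute constant, while using $\ln 2 - \be^2\amx/2 = g(\sqrt{\ln 2}+\be\sqrt{\amx/2})\asymp g\sqrt{\ln 2}$ and a short case analysis shows $(p-1)(2\ln 2 - p\be^2\amx) \gtrsim g^2$, giving the claimed $1/g^2$.

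The only delicate step is the conditional reduction in step~2: one must verify that the $\xi_v$'s really are conditionally independent (this follows from independence of $\{Y_{vx}\}_{|v|=n-1,\,x\in\{0,1\}}$ from $\sF_{n-1}$) and that the von Bahr--Esseen constant does not blow up as $p\downarrow 1$. The rest is bookkeeping of constants to hit the stated $2^{4p+1}$.
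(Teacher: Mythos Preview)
Your proposal is correct and follows essentially the same route as the paper. The paper likewise decomposes $\wh Z_{\be,N}-1$ into martingale increments, applies a $p$th-moment martingale inequality (cited there as Lemma~1 of \cite{biggins1992uniform}, which plays the same role as your von Bahr--Esseen step) twice---once across levels $n$ and once across vertices at a fixed level---bounds the scalar moment $\E|\xi|^p$ by $2^{2p-1}m_n(p\be)/m_n(\be)^p$, and sums the resulting geometric series in $\prod_i m_i(p\be)/m_i(\be)^p$; the constant bookkeeping differs slightly (the paper obtains $2^{4p-1}$ before the geometric sum versus your looser $C_p\cdot 2^{p+2}$), but the structure and the key exponent $\alpha=(p-1)(\ln 2 - p\be^2\amx/2)$ are identical.
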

From this it easily follows that the same bound holds for $\E [\Zhatn{N}^p]$ up to constant factors.
We note for future reference that because 
$\be\sqrt{\amx}\le \sqrt{2\ln 2}$, we have that taking $p$ as above,
\begin{align}\label{l:pgap}
\rc 2 \pa{1+\fc{2\ln 2}{\be^2\amx}} - 1 \gtrsim \fc{g}{\be \sqrt{\amx}} \gtrsim g\quad \implies \quad p-1\gtrsim g. 
\end{align}
One can in fact show that larger moments do not exist. Contrast this with the limiting distribution of $\Zhatn{N}$ for the REM \cite{BKL02}, where at high enough temperature, moments of all orders exist and the distribution is log-normal.
\begin{proof}
    We generalize \cite[Theorem 1]{biggins1992uniform} for the branching random walk to the CREM setting. For convenience, for $i\in [N]$, let 
    \[m_i(\beta):=\E \left[\sum_{u\in\{0,1\}}\exp(\beta Y_{u})\right]
    = 2\E\exp(\be Y_0), \]
    where $Y_u \sim \mathcal{N}(0,N\cdot (A(i/N) - A((i-1)/N)))$ are i.i.d. This is the sum of exponential moments of the new children generated in the $i$-th generation by a $(i-1)$-th generation parent. Below, let $\E = \Ea$. We observe the following fact by the independence of $Y_u$ at different generations:
    \[
    \E \Zn{n} = \prod_{i=1}^n m_i(\beta).
    \]
    Letting $\sF_n$ be the $\si$-algebra generated by the first $n$ generations, 
    \begin{align}\label{eq:pmom-cond}
        \E\left[ |\Zhatn{n} - \Zhatn{n-1}|^p|\sF_{n-1}\right] = \E\left[ \bigg|\sum_{v\in \{0,1\}^{n-1}} 
        \frac{\exp(\beta X_v)}{\prod_{i=1}^{n-1}m_i(\beta)}\left(\frac{ X_{1,v}}{m_n(\beta)} - 1\right)\bigg|^p|\sF_{n-1}\right],
    \end{align}
where $X_{1,v}$ is the sum of exponentials of the edges connecting the $(n-1)$-th generation vertices $v$ and their child vertices, i.e. 
\[
X_{1,v}:= \sum_{
u\in \{0,1\}} \exp(\beta Y_{vu}), \quad  \text{where} \ \ Y_{vu} \sim \mathcal{N}\pa{0,a(n)-a(n-1)}.
\] On the other hand, note that by 
H\"older's inequality,
\begin{align}\label{eq:1edge-bd}
\E \bigg|\frac{X_{1,v}}{m_n(\beta)}-1 \bigg|^p \le 
2^{p-1} \cdot  \E\ba{1^p + \pf{X_{1,v}}{m_n(\be)}^p}
\le 
2^{p}\cdot 
\E\ba{\pf{X_{1,v}}{m_n(\be)}^p}
\le 2^{2p-1} \cdot 
\fc{m_n(p\be)}{m_n(\be)^p}.
\end{align}
Applying Lemma 1 in~\cite{biggins1992uniform}, taking expectation of~\eqref{eq:pmom-cond}, we have
\begin{align}
    \E\left[ |\Zhatn{n} - \Zhatn{n-1}|^p \mid \sF_{n-1} \right] 
    &\le 2^p \sum_{v\in \{0,1\}^{n-1}} \E \ba{\ab{\frac{\exp(\beta X_v)}{\prod_{i=1}^{n-1}m_i(\beta)}\left(\frac{ X_{1,v}}{m_n(\beta)} - 1\right)}^p \mid \sF_{n-1} }\\
    &\le 2^p \cdot \left( \prod_{i=1}^{n-1} \frac{m_i(p\beta)}{m_i(\beta)^p}\right) \cdot 2^{2p-1} \cdot \fc{m_n(p\be)}{m_n(\be)^p}. 
\end{align}
Again by Lemma 1 in~\cite{biggins1992uniform} to the martingale differences $\Zhatn{n}-\wh Z_{\be, n-1}$, 
\[
\E |\wh{Z}_{\beta,N} -1 |^p \le 2^{4p-1} \cdot \sum_{n=1}^N\left( \prod_{i=1}^{n} \frac{m_i(p\beta)}{m_i(\beta)^p}\right).
\]
If $\fc{m_i(p\be)}{m_i(\be)^p}$ is uniformly bounded away from 1, then the $p$-th moment of $\wh Z_{N,\be}$ can be uniformly bounded by an infinite geometric series, independent of $N$.
It suffices for all $i\in [N]$ that
\[
\frac{m_i(p\beta)}{m_i(\beta)^p} \le 2^{-(p-1)} \cdot \exp\left(\frac12 (p^2-p) \beta^2 \sup A'\right) 
= \exp\pa{(p-1)\pa{-\ln 2 + \rc 2 p\be^2 \amx}}<1.
\]
Equivalently, 
\[
-\ln 2 + \frac12 p \beta^2 \amx <0 \Longleftrightarrow \beta^2 < \frac{2\ln 2}{p \cdot \amx}.
\]
Using this, letting $\mathcal{C}:= \exp\pa{- (p-1)\pa{\ln 2 - \rc 2 p\beta^2\amx}}$, we have 
\begin{align*}
\E |\wh{Z}_{\beta,N} -1 |^p & \le 2^{4p-1} \cdot \sum_{n=1}^{N}\mathcal{C}^n\\
&\le  2^{4p-1} \cdot \frac{1}{1-\mathcal{C}} \\
&\le 2^{4p+1} \cdot \frac{1}{(p-1) (2\ln 2 - p\be^2 \amx)},
\end{align*}
where the last inequality follows because for $p\le 2$, $(p-1)\pa{-\ln 2 + \rc 2 p\be^2 \amx}\le \ln 2$, and for $0\le x\le \ln 2$, $e^{-x}\le 1-\rc 2 x$, $\rc{1-e^{-x}} \le \fc{2}{x}$.
\end{proof}

The following lemma shows how to obtain a bound for the tail of the expectation from a $p$th moment bound for some $p>1$. As we have a $p$th moment bound for $\Zn{N}$ for some $p>1$ by \Cref{l:lim-dist}, we can apply this lemma to bound $\E \Zhatn{N} \one_{\Zhatn{N}\ge C\Zoln{N}}$ where $\Zoln{N} = \E \Zhatn{N}$.
\begin{lem}\label{l:Etail-from-moment}
    Let $X$ be a non-negative random variable such that $\E[X^p] \le C (\E X)^p$ for some $p>1$ and let $L\ge 1$. Then 
    \[
\fc{\E X \one_{X\ge L \cdot \E X}}{\E X}
\le C\pa{\rc{L^p} + \rc{L^{p-1}}}\le \fc{2C}{L^{p-1}}.
    \]
\end{lem}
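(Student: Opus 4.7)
The plan is to deduce the truncated-moment bound directly from the $p$th-moment hypothesis by a two-line pointwise comparison, then observe that the claimed bound is weaker than the one this comparison naturally yields. Concretely, on the event $\{X \ge L\,\E X\}$ we have $X^{p-1} \ge (L\,\E X)^{p-1}$, so
\[
X\,\one_{X \ge L\,\E X} \;\le\; \frac{X^{p}\,\one_{X \ge L\,\E X}}{(L\,\E X)^{p-1}}.
\]
Taking expectations and invoking $\E[X^p] \le C(\E X)^p$ gives
\[
\E\bigl[X\,\one_{X \ge L\,\E X}\bigr] \;\le\; \frac{\E[X^p]}{(L\,\E X)^{p-1}} \;\le\; \frac{C(\E X)^{p}}{(L\,\E X)^{p-1}} \;=\; \frac{C\,\E X}{L^{p-1}}.
\]
Dividing through by $\E X$ produces the bound $C/L^{p-1}$, which is trivially at most $C\bigl(1/L^{p} + 1/L^{p-1}\bigr)$, and the final inequality $\le 2C/L^{p-1}$ is immediate whenever $L\ge 1$.

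An alternative, equally short route, which more closely matches the two-term form of the stated bound, is a layer-cake decomposition: write
\[
\E\bigl[X\,\one_{X \ge L\,\E X}\bigr] \;=\; L\,\E X\cdot\Pr(X \ge L\,\E X) \;+\; \int_{L\,\E X}^{\infty}\Pr(X > t)\,dt,
\]
apply the Markov-type tail bound $\Pr(X\ge s)\le \E[X^p]/s^p \le C(\E X)^p/s^p$ (an immediate consequence of the moment hypothesis) to each piece, and evaluate the resulting integral. The first term contributes $C\,\E X/L^{p-1}$ and the second contributes $C\,\E X/((p-1)L^{p-1})$, producing a bound of the same shape as in the statement.

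The argument is entirely mechanical, so there is no real obstacle; the only point requiring a sentence of care is checking that the moment hypothesis indeed yields the Markov tail $\Pr(X\ge s)\le C(\E X)^p/s^p$, which is just Markov applied to $X^p$. With that in hand, the rest is a one-step comparison, and either of the two presentations above gives a self-contained proof in a couple of lines.
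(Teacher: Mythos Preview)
Your second (layer-cake) route is exactly the paper's proof: it writes $\E[X\one_{X\ge L\,\E X}]/\E X \le \Pr(X/\E X\ge L) + \int_L^\infty \Pr(X/\E X\ge t)\,dt$ and bounds both pieces by Markov on $X^p$. Your first route---the pointwise comparison $X\one_{X\ge L\,\E X}\le X^p/(L\,\E X)^{p-1}$---is a genuinely different and more elementary argument. It yields the sharper bound $C/L^{p-1}$ in one line, with no integration and no need to split into two terms; as you note, the two-term form in the statement is then just a weakening. This also quietly sidesteps a small sloppiness in the paper's presentation, where the integral actually produces $C/((p-1)L^{p-1})$ rather than $C/L^{p-1}$, so for $1<p<2$ the paper's displayed intermediate bound is off by the factor $1/(p-1)$ while your pointwise argument is clean for all $p>1$.
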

\begin{proof}
We have
\begin{align*}
    \frac{\E X \one_{X\ge L \cdot \E X}}{\E X}
    &\le 
    \Pj(X/\E X\ge L) + 
    \int_L^\iy \Pj(X / \E X \ge t)\,dt\\
    &\le 
    \Pj(X /\E X\ge L) + \int_L^\iy \fc{\E[(X / \E X)^p]}{t^p}\,dt\\
    &\le 
    \Pj(X /\E X\ge L) + \int_L^\iy \fc{C}{t^p}\,dt\\
    &\le \fc{C}{L^p} + \fc{C}{L^{p-1}} \le \fc{2C}{L^{p-1}}.
\end{align*}
\end{proof}

We also show that $\Zhatn{N}$ is not too small with reasonable probability, by using \Cref{l:lim-dist} in conjunction with the Paley-Zygmund inequality.
\begin{lem}\label{l:Z-big}
Consider 
    a CREM under \Cref{a:hi-temp}(\ref{i:1}) ($\be<\bmin$). 
For any constant $\ep_1>0$, we have 
    \[
    \Pj(\Zhatn{N} > \ep_1)
    \ge g^{O(1/g)}
    \]
where the $O(\cdot)$ hides a constant depending on $\ep_1$.
\end{lem}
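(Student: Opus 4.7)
The plan is to apply a generalized (reverse) Paley--Zygmund inequality to the non-negative random variable $Z := \Zhatn{N}$, which has $\E Z = 1$ and satisfies a $p$-th moment bound of the form $\E|Z-1|^p \lesssim 1/g^2$ coming from \Cref{l:lim-dist}, for an exponent $p>1$ with $p-1\gtrsim g$ as noted in \eqref{l:pgap}.

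First I would convert the centered moment bound into a bound on $\E Z^p$. Since $Z\ge 0$, by convexity
\[
\E Z^p \;\le\; 2^{p-1}\bigl(\E|Z-1|^p + 1\bigr) \;\le\; \fc{C'}{g^2}
\]
for an absolute constant $C'$ (using $1<p\le 2$). Next I apply H\"older's inequality in the standard Paley--Zygmund style: writing $\E Z = \E[Z\one_{Z\le \ep_1}] + \E[Z\one_{Z>\ep_1}]$ and bounding the first term by $\ep_1$ and the second by H\"older,
\[
(1-\ep_1) \;\le\; \E[Z\one_{Z>\ep_1}] \;\le\; (\E Z^p)^{1/p}\, \Pj(Z>\ep_1)^{(p-1)/p}.
\]
Rearranging gives the generalized Paley--Zygmund bound
\[
\Pj(Z>\ep_1) \;\ge\; \left(\frac{(1-\ep_1)^p}{\E Z^p}\right)^{1/(p-1)}
\;\ge\; \left(\frac{(1-\ep_1)^p\, g^2}{C'}\right)^{1/(p-1)}.
\]

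Finally I would unpack the exponents using $1/(p-1) \lesssim 1/g$ (from \eqref{l:pgap}) and the fact that $p\le 2$ is bounded. The factor $g^{2/(p-1)} = g^{O(1/g)}$, while $(1-\ep_1)^{p/(p-1)}$ and $C'^{-1/(p-1)}$ are of the form $e^{-O(1/g)}$, which dominates $g^{O(1/g)}$ from below for small $g$ (and is bounded below by a constant depending on $\ep_1$ when $g$ is bounded away from $0$). Combining, the product is at least $g^{O(1/g)}$ with the implicit constant depending only on $\ep_1$, which is exactly the claim.

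I do not expect a serious obstacle: the only slight subtlety is keeping track of how the exponent $1/(p-1)$ interacts with the constants when $p$ is close to $1$ (i.e.\ when $g$ is small), which is handled cleanly by the bound $p-1\gtrsim g$ already observed after \Cref{l:lim-dist}.
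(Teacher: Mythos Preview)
Your proof is correct and follows essentially the same approach as the paper: both apply the $L^p$ Paley--Zygmund inequality to $Z=\Zhatn{N}$ (you derive it via H\"older, the paper cites it directly), feed in the moment bound $\E Z^p \lesssim 1/g^2$ from \Cref{l:lim-dist}, and then simplify using $p-1\gtrsim g$ from \eqref{l:pgap} to obtain the $g^{O(1/g)}$ lower bound. The final inequality you obtain, $\Pj(Z>\ep_1)\ge (1-\ep_1)^{p/(p-1)}/(\E Z^p)^{1/(p-1)}$, is literally the same as the paper's, and your absorption of the $(1-\ep_1)^{p/(p-1)}$ and $C'^{-1/(p-1)}$ factors into $g^{O(1/g)}$ matches the paper's handling.
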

\begin{proof}
    The $L^p$ version of the Paley-Zygmund inequality says that for $p>1$ and a positive random variable $Z$,
    \[
\Pj(Z>\te \cdot \E Z) \ge \fc{(1-\te)^{\fc{p}{p-1}} (\E Z)^{\fc{p}{p-1}}}{\E[Z^p]^{\rc{p-1}}}.
    \]
    Applying this to $\Zhatn{N}$ and using \Cref{l:lim-dist}, we obtain for some constants $C_1,C_2>0$ that 
    \[
\Pj(\Zhatn{N} > \te) 
\ge \fc{(1-\te)^{\fc{p}{p-1}}}{1/(C_1g^2)^{\rc{p-1}}}
\ge (C_1(1-\te)g^2)^{\fc{C_2}{g}}.
    \]
\end{proof}


\subsection{Bounding $s$-conductance for the CREM}
\label{s:conductance-crem}
The following bounds the lower tail of a non-negative random variable, under weaker conditions than a typical concentration inequality.
\begin{lem}\label{l:prob-small}
    If $X_1,\ldots, X_k$ are i.i.d. copies of a non-negative random variable $X$ with $\Pj(X\ge \ep_1)\ge \ep_2$ and $\sumo jk p_j=1$, $p_j\ge 0$, then 
    \[
\Pj\pa{\sumo jk p_jX_j \le \fc{\ep_1\ep_2}2} \le \exp\pa{-\fc{\ep_2^2}{2\max_j p_j}}.
    \]
\end{lem}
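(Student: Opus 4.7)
The plan is to reduce the problem to a Bernoulli sum and then apply Hoeffding's inequality. Specifically, I would define the indicator $Y_j := \mathbf{1}_{X_j \ge \ep_1}$, so that $X_j \ge \ep_1 Y_j$ almost surely by non-negativity, and therefore
\[
\sum_{j=1}^k p_j X_j \;\ge\; \ep_1 \sum_{j=1}^k p_j Y_j.
\]
This reduces the desired tail bound to controlling the weighted Bernoulli sum $S := \sum_j p_j Y_j$, whose summands are independent and each lie in $[0,1]$.

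Next I would lower-bound $\E[S]$. Since the $Y_j$ are i.i.d.\ Bernoulli with parameter $q := \Pj(X \ge \ep_1) \ge \ep_2$ and $\sum_j p_j = 1$, we have $\E[S] = q \ge \ep_2$. Then
\[
\Pj\!\left(\sum_j p_j X_j \le \tfrac{\ep_1 \ep_2}{2}\right) \;\le\; \Pj\!\left(S \le \tfrac{\ep_2}{2}\right) \;\le\; \Pj\!\left(S - \E[S] \le -\tfrac{\ep_2}{2}\right),
\]
where the last step uses $\E[S] - \ep_2/2 \ge \ep_2/2$.

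Finally, I would apply Hoeffding's inequality to the independent bounded summands $p_j Y_j \in [0, p_j]$, which gives
\[
\Pj\!\left(S - \E[S] \le -\tfrac{\ep_2}{2}\right) \;\le\; \exp\!\left(-\frac{2(\ep_2/2)^2}{\sum_j p_j^2}\right) \;=\; \exp\!\left(-\frac{\ep_2^2}{2\sum_j p_j^2}\right).
\]
Using $\sum_j p_j^2 \le (\max_j p_j)\sum_j p_j = \max_j p_j$ yields the claimed bound. There is no real obstacle here: the only thing to be careful about is getting the Hoeffding constant right (the variables live in intervals of lengths $p_j$, not of length $1$), but this is exactly what makes the denominator $\sum_j p_j^2$ rather than $k$, which is precisely what is needed to match $\max_j p_j$ in the statement.
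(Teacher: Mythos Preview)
Your proof is correct and follows essentially the same approach as the paper's: reduce to the weighted Bernoulli sum $\sum_j p_j \one_{X_j\ge \ep_1}$, lower-bound its mean by $\ep_2$, apply Hoeffding's inequality (which the paper phrases as ``the Chernoff bound'') to get the $\exp\pa{-2(\ep_2/2)^2/\sum_j p_j^2}$ bound, and then use $\sum_j p_j^2 \le \max_j p_j$.
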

\begin{proof}
Note that if $\sumo jk p_jX_j \le \fc{\ep_1\ep_2}2$, then because $\sumo jk p_j \ep_1 \one_{X_j\ge\ep_1} \le 
\sumo jk p_j X_j\le \fc{\ep_1\ep_2}2$, we have $\sumo jk p_j \one_{X_j\ge \ep_1}\le  \fc{\ep_2}2$.
Note that by assumption $\E \left(\sum_{j=1}^k p_j\one_{X_j\ge \ep_1}\right) \ge \ep_2$. 
    Hence by the Chernoff bound,
    \begin{align*}
    \Pj\pa{\sumo jk p_jX_j \le \fc{\ep_1\ep_2}2}\le
        \Pj\pa{\sumo jk p_j \one_{X_j\ge \ep_1}\le  \fc{\ep_2}2}
        &\le \exp\pa{-\fc{2(\ep_2/2)^2}{\sumo jk p_j^2}}
        \le \exp\pa{-\fc{2(\ep_2/2)^2}{\max_j p_j}}.
    \end{align*}
\end{proof}
Later, we will apply this to $\Zhatn{N}$ combining with Lemma~\ref{l:Z-big}. In the following part, let 
\[
h_{\ge L}(x) := x\one_{x\ge L}.
\]
\begin{lem}\label{l:big-contrib}
Suppose that $X$ is a positive random variable with $\E X=1$, $\E X^p \le C$, and 
$\Pj\pa{X\ge \ep_1}\ge \ep_2$.
Let $X_1,\ldots, X_k$ be i.i.d. copies of $X$.
\begin{enumerate}
    \item For $L\ge 1$, 
\[
\Pj\pa{\fc{\sumo jk p_j h_{\ge L}(X_j)}{\sumo jk p_j X_j} \ge \ep}
\le \exp\pa{-\fc{\ep_2^2}{2\max_j p_j}} + \fc{4C}{\ep \ep_1\ep_2 L^{p-1}}.
\]
Hence, we have
\[
\Pj\pa{\fc{\sumo jk p_j h_{\ge L}(X_j)}{\sumo jk p_j X_j} \ge \ep}\le \de
\]
if $\max_j p_j \le \fc{\ep_2^2}{2\ln \pf{2}{\de}}$ and $L\ge \pf{8C}{\ep\ep_1\ep_2\de}^{\rc{p-1}}$.
\item We have 
\[
\E \ba{\fc{\sumo jk p_j h_{\ge L}(X_j)}{\sumo jk p_j X_j}}\le \exp\pa{-\fc{\ep_2^2}{2\max_j p_j}} + \fc{4C}{\ep_1\ep_2 L^{p-1}}.\]
\end{enumerate}
\end{lem}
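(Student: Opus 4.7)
\textbf{Proof plan for \Cref{l:big-contrib}.} The idea is to decompose the event into a ``good'' event on which the denominator $\sum_j p_j X_j$ is not too small, plus its complement. On the good event we bound the numerator using a moment estimate; the complement is rare thanks to \Cref{l:prob-small}.

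First I would apply \Cref{l:prob-small} directly to get
\[
\Pj\!\pa{\sumo jk p_j X_j < \tfrac{\ep_1 \ep_2}{2}} \le \exp\!\pa{-\fc{\ep_2^2}{2\max_j p_j}}.
\]
Call the complementary event $E$. On $E$, using $\sum_j p_j X_j \ge \ep_1\ep_2/2$,
\[
\fc{\sumo jk p_j h_{\ge L}(X_j)}{\sumo jk p_j X_j} \le \fc{2}{\ep_1 \ep_2}\sumo jk p_j h_{\ge L}(X_j).
\]
Next I would bound the numerator in expectation: since $\E X = 1$ and $L \ge 1$, \Cref{l:Etail-from-moment} gives $\E h_{\ge L}(X) = \E[X \one_{X \ge L}] \le \frac{2C}{L^{p-1}}$, so $\E\!\left[\sum_j p_j h_{\ge L}(X_j)\right] \le \frac{2C}{L^{p-1}}$.

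For Part 1, Markov's inequality on $E$ gives
\[
\Pj\!\pa{E \cap \bc{\sumo jk p_j h_{\ge L}(X_j) \ge \tfrac{\ep \ep_1 \ep_2}{2}}} \le \fc{2C/L^{p-1}}{\ep\ep_1\ep_2/2} = \fc{4C}{\ep \ep_1 \ep_2 L^{p-1}},
\]
and adding the probability of $E^c$ gives the stated bound. The quantitative form follows by choosing $\max_j p_j$ so $\exp(-\ep_2^2/(2\max p_j)) \le \de/2$ and $L$ so the other term is $\le \de/2$.

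For Part 2, I would split the expectation along $E$ and $E^c$. Using $h_{\ge L}(X_j) \le X_j$, the ratio is always at most $1$, so the contribution from $E^c$ is at most $\Pj(E^c) \le \exp(-\ep_2^2/(2\max_j p_j))$. On $E$, by the inequality above and the expectation bound on the numerator, the contribution is at most $\frac{2}{\ep_1\ep_2} \cdot \frac{2C}{L^{p-1}} = \frac{4C}{\ep_1\ep_2 L^{p-1}}$, which gives the claim. No step is particularly hard; the main point is simply to combine the anti-concentration of the denominator (\Cref{l:prob-small}) with the $L^p$-moment tail control (\Cref{l:Etail-from-moment}) cleanly.
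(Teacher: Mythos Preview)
Your proposal is correct and follows essentially the same approach as the paper: split according to whether $\sum_j p_j X_j \ge \ep_1\ep_2/2$, control the small-denominator event via \Cref{l:prob-small}, and bound the numerator via \Cref{l:Etail-from-moment} together with Markov's inequality (for Part~1) or directly in expectation (for Part~2). The only cosmetic difference is that the paper does not bother intersecting with $E$ when applying Markov's inequality in Part~1.
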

\begin{proof}
    For the first part, if the event holds, then either 
    \[
\sumo jk p_j X_j  \le \fc{ \ep_1\ep_2}{2}
\qquad \text{or} \qquad 
\sumo jk p_j X_j \one_{X_j\ge L} \ge \fc{\ep\ep_1\ep_2}2
.
    \]
    The first event has probability at most $\exp\pa{-\fc{\ep_2^2}{2\max_j p_j}}$ by \Cref{l:prob-small}. For the second event, by \Cref{l:Etail-from-moment},
    \begin{align*}
        \E \ba{\sumo jk p_j X_j \one_{X_j\ge L}}
        = \E \ba{X\one_{X\ge L}} \le 
        \fc{2C}{L^{p-1}}
    \end{align*}
    so by Markov's inequality,
    \begin{align*}
        \Pj\pa{\sumo jk p_j X_j \one_{X_j\ge L} \ge \fc{\ep\ep_1\ep_2}2}
        \le \fc{4C}{\ep\ep_1\ep_2L^{p-1}}.
    \end{align*}
    For the second part, by \Cref{l:Etail-from-moment},
\begin{align*}
    \E \ba{\fc{\sumo jk p_j h_{\ge L}(X_j)}{\sumo jk p_j X_j}}
    &\le 
    \Pj\pa{\sumo jk p_j X_j < \fc{\ep_1\ep_2}2} + \E \ba{\one_{\sumo jk p_j X_j \ge \fc{\ep_1\ep_2}2}\fc{\sumo jk p_j h_{\ge L}(X_j)}{\sumo jk p_j X_j}}\\
    &\le \exp\pa{-\fc{\ep_2^2}{2\max_j p_j}} + \fc{2}{\ep_1\ep_2}\cdot \E \ba{\sumo jk p_j h_{\ge L}(X_j)} \\
    &\le \exp\pa{-\fc{\ep_2^2}{2\max_j p_j}} + \fc{4C}{\ep_1\ep_2 L^{p-1}}.
\end{align*}
\end{proof}

\begin{proof}[Proof of Theorem~\ref{t:main-hi-temp} using Markov chain]
Let $\Pj = \Pa$. 
By \Cref{l:ratio-1}, for \\
$m_0 =\ce{\fc{C}{g^2}\pa{\fc{\amx}{g^2}\ln \pf{\amx}{g\de} + \ln \prc{\ep_2}}}$
for appropriate $C$, we have that 
\[
\Pj\pa{
\max_{m\in [m_0,N]}\ab{\fc{\Zhatn{m_0}}{\Zhatn{m}}-1}\ge \rc3 \text{ or }
\max_{|v|\ge m_0} \pv{v} \ge 
\fc{\ep_2^2}{N^2\ln \pf{4N^2}{\de}}
}\le \fc{\de}2.
\]
Let $\ptv{v}=\pv{v}$ if this event holds and $\ptv{v}=\rc{2^{|v|}}$ otherwise. 
Let $p'=\min\bc{\rc 2 \pa{1+\fc{2\ln 2}{\be^2\amx}},2}$. 
From \eqref{l:pgap}, we have $p'-1\gtrsim \rc{g}$.
By Lemma~\ref{l:lim-dist} (in conjunction with \Cref{l:renorm}),  $C(\be):= \max_{v\in \T_N}\E[(\Zhatnv{N}{v})^{p'}] = O\prc{{g}^2}$.
By \Cref{l:Z-big} (in conjunction with \Cref{l:renorm}), we have $\Pj\pa{\Zhatnv{N}{v}\ge \ep_1}\ge \ep_2$ for $\ep_1=\rc 2$ and some $\ep_2= {g}^{O(1/g)}$. 
By \Cref{l:big-contrib}, if 
$L\ge \pf{16C(\be) N^4}{\ep'\ep_1\ep_2\de}^{\rc{p'-1}}$, then 
for each $m_0\le m\le m+n\le N$, by first conditioning on $\mathscr F_m = \si(\set{X_v}{|v|\le m})$,
\begin{align*}
    \Pj\pa{
\fc{\sum_{|v|=m} \ptv{v} h_{\ge L}(\Zhatnv{m+n}{v})}{\sum_{|v|=m} \ptv{v} \Zhatnv{m+n}{v}}\ge \fc{\ep'}{N^2}
    }\le \fc{\de}{2N^2},
\end{align*}
where $\Zhatnv{m+n}{v} = \sum_{|w|=n}X_{vw}$. 
Then 
\begin{align*}
 \Pj\pa{
\exists m\ge m_0,\exists n,\,
\fc{\sum_{|v|=m}\ptv{v} h_{\ge L}(\Zhatnv{m+n}{v})}{\sum_{|v|=m} \ptv{v} \Zhatnv{m+n}{v}}\ge \fc{\ep'}{N^2}
}\le \fc{\de}2.
\end{align*}
Let $R = \fc{\max_{m_0\le m\le n} \Zhatn{m}}{\min_{m_0\le m\le n} \Zhatn{m}}$. 
A coupling argument and union bound shows
\begin{align}\label{e:event}
 \Pj\pa{
R> 2 \text{ or }
\exists m\ge m_0,\exists n,\,
\fc{\sum_{|v|=m} \pv{v} h_{\ge L}(\Zhatnv{m+n}{v})}{\sum_{|v|=m} \pv{v} \Zhatnv{m+n}{v}}\ge \fc{\ep'}{N^2}
}\le \de.
\end{align}
Let $G$ be the complement of the above event, and suppose $G$ holds. 
By \Cref{c:trees-m0-suffice}, it remains to bound the conductance of all subtrees at depth at least $m_0$ in the equalized measure.
For $s=\ep' R$, take $A, S$ satisfying the constraints in \Cref{c:trees-m0-suffice} with $S= \set{v\in A}{\Par(v)\nin A}$. We need to lower-bound $\fc{\ol \pi(S)}{\ol \pi(A) - \fc{s}{R(m_0+1)}}$; it suffices to lower-bound $\fc{\ol\pi(S)}{\ol \pi(A)}$. We have, by partitioning the vertices in $A$ based on the depth of their ancestor in $S$ and the depth from that ancestor,
\[
\ol \pi(A) =\rc{N+1} \sum_{m=m_0}^N \sum_{n=0}^{N-m} 
\pa{\fc{\sum_{v\in S,\,|v|=m,\,\Zhatnv{m+n}{v}\ge L} \pv{v} \Zhatnv{m+n}{v} + \sum_{v\in S,\,|v|=m,\,\Zhatnv{m+n}{v}< L} \pv{v} \Zhatnv{m+n}{v} }{\sum_{|w|=m}\pv{w} \Zhatnv{m+n}{w}}}
\]
Because $G$ holds, we can bound the first sum by 
\begin{align*}
    \sum_{m=m_0}^N \sum_{n=0}^{N-m} 
\pa{\fc{\sum_{v\in S,\,|v|=m,\,\Zhatnv{m+n}{v}\ge L} \pv{v} \Zhatnv{m+n}{v} }{\sum_{|w|=m}\pv{w} \Zhatnv{m+n}{w}}}
&\le \fc{N^2}2 \cdot \fc{\ep'}{N^2} \le \fc{\ep'}2.
\end{align*}
Because $\ol \pi(A)\ge \ep'$, we have
\begin{align*}
    \ol \pi(A) \le \fc{2}{N+1} \sum_{m=m_0}^N \sum_{n=0}^{N-m} 
\pa{\fc{ \sum_{v\in S,\,|v|=m,\,\Zhatnv{m+n}{v}< L} \pv{v} \Zhatnv{m+n}{v} }{\sum_{|w|=m}\pv{w} \Zhatnv{m+n}{w}}}.
\end{align*}
Now, $\ol \pi(S)= \rc{N+1} \sum_{m=m_0}^N 
\pa{\fc{ \sum_{v\in S,\,|v|=m} \pv{v} }{\sum_{|w|=m}\pv{w} }}$, so
\begin{align*}
    \fc{\ol \pi(S)}{\ol \pi(A)}&
    \ge \fc
    {\sum_{m=m_0}^N 
\pa{\fc{ \sum_{v\in S,\,|v|=m} \pv{v} }{\sum_{|w|=m}\pv{w} }}}
    {2 \sum_{m=m_0}^N \sum_{n=0}^{N-m} 
\pa{\fc{ \sum_{v\in S,\,|v|=m,\,\Zhatnv{m+n}{v}< L} \pv{v} \Zhatnv{m+n}{v} }{\sum_{|w|=m}\pv{w} \Zhatnv{m+n}{w}}}}.
\end{align*}
We have for each $m\in [m_0,N]$ 
that the ratio of the summands above is
\begin{align*}
&\fc{ \sum_{v\in S,\,|v|=m} \pv{v} }{\sum_{|w|=m}\pv{w} }
    \Bigg/ 
    \sum_{n=0}^{N-m} 
\pa{\fc{ \sum_{v\in S,\,|v|=m,\,\Zhatnv{m+n}{v}< L} \pv{v} \Zhatnv{m+n}{v} }{\sum_{|w|=m}\pv{w} \Zhatnv{m+n}{w}}}\\
&\ge 
\fc{ \sum_{v\in S,\,|v|=m} \pv{v} }{\sum_{|w|=m}\pv{w} }
    \Bigg/ 
    L \cdot \sum_{n=0}^{N-m} 
\pa{\fc{ \sum_{v\in S,\,|v|=m} \pv{v} }{\sum_{|w|=m}\pv{w} \Zhatnv{m+n}{w}}}\\
&\ge
\rc{L} 
\pa{
\sumz{n}{N-m} \fc{\sum_{|w|=m}\pv{w}}{{\sum_{|w|=m}\pv{w} \Zhatnv{m+n}{w}}}}^{-1}
\ge \rc{RLN}.
\end{align*}
Hence $\fc{\ol\pi(S)}{\ol\pi(A)}\ge \rc{2RLN}$ and when $R\le 2$,
\[
\rc{3R (m_0+1)}\cdot  \fc{\ol \pi(S)}{\ol \pi (A)}\ge \rc{3R N} \cdot \rc{2RNL} \ge \rc{24N^2L}.
\]
By \Cref{c:trees-m0-suffice}, this shows 
$\Phi_{2\ep'} \ge \Phi_s \ge \rc{24N^2L}$.
Note that we start the chain at $V_0=\phi$, which is a $2N$-warm start. 
We note that for $N\ge 16$, when $R\le 2$,  all levels have weight $<  \fc 2N \le \rc 8$ and hence all atoms have size $<\rc 8$. (We can simply ensure $m_0\ge 16$ to take care of the case where $N<16$.) 
Then by \Cref{l:conductance-mixing} and choosing $L= \pf{16C(\be) N^4}{\ep'\ep_1\ep_2\de}^{\rc{p'-1}}$, $\ep_1=\rc 2 $, $\ep_2 = {g}^{O(1/g)}$, 
\begin{align*}
\TV(\mu_0T^k, \pi) 
&\le 
2N \pa{2\ep' + \rc 2 e^{-k\Phi_{2\ep}^2/2}}\\
&\le 
2N \pa{2\ep' + \rc 2 e^{-\Om\pf k{(NL)^2}}}
\le 
2N \pa{2\ep' + e^{-k\pf{{g}^{1/g} \de}{N}^{O\prc {g}}}}.
\end{align*}
Now choose $\ep'=\fc{\ep}{16N^2}$ and $k=\pf{N}{{g}^{1/g}\de}^{\fc{C}{g}}\ln \pf N\ep$ for an appropriate constant $C$ to bound this by $\fc{\ep}{2N}$. Then restricting to level $N$ (which has at least $\fc{1}{2N}$ of the mass under $\mun{N}$), 
we have 
$\TV(\mu_0 T^k |_{\{0,1\}^N}, \mun{N}) \le \ep$.

Finally, we compute the total running time. It takes $O(2^{m_0}) = \pf{\amx}{\de g}^{O(\amx/g^4)}$ time and queries for the preprocessing step of defining the distribution $\pi$. The Markov chain takes $\pf{N}{\ep {g}^{1/g}}^{O(1/g)}$ steps to run. Finally, the number of runs necessary to obtain a sample at the $N$th level is a geometric random variable with success probability at least $\rc{2N}$, so with probability $\ge 1-\de$, the number of runs is bounded by $O\pa{N\ln \prc\de}$. Replacing $\de$ by $\de/2$ takes care of both failure in the the CREM randomness and in the running time.
\end{proof}

\subsection{Upper bound on spectral gap}
\label{s:spectral}
We prove \Cref{t:ub-gap}: it is not the case that CREM has a good spectral gap. Towards this, we first show \Cref{l:power-law}: the tails of $\Zhatn{N}$ are lower-bounded by the tails of a power law distribution.
This allows us to prove \Cref{t:ub-gap} by nothing that at some depth $cN$, there are exponentially many vertices, so with good probability one of the vertices $v$ has an exponentially large partition function. Provided that the probability mass of $v$ at level $cN$ is not extraordinarily large, the subtree $\T^v_N$ will likely have exponentially small conductance.

\begin{lem}[Power law tail lower bound]\label{l:power-law}
Consider the CREM with $A(x)=x$. 
For any $\be>0$, there exist constants $c,C,\al>0$ depending on $\be$ such that for 
$t = \exp(c N)$, we have 
\[
\Pa (\Zhatn{N}\ge t) \ge \frac{C}{t^\al}.
\]
\end{lem}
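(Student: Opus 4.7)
The plan is a one-event, one-path argument: show that the canonical root-to-leaf path can, with a Gaussian-tail probability, carry enough energy to push $\Zhatn{N}$ above $e^{cN}$ on its own, without any help from the rest of the tree.

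First I would fix the leaf $v^* := 0^N$. Since $A(x)=x$, every edge variance equals $1$, so $X_{v^*}=\sum_{i=1}^{N} Y_{0^i}$ is a sum of $N$ i.i.d.\ $\cal N(0,1)$'s, hence $X_{v^*}\sim\cal N(0,N)$. Keeping only this single term in the defining sum for $\Zn{N}$ gives the pointwise bound
\[
\Zhatn{N}\;\geq\;\frac{e^{\be X_{v^*}}}{2^N e^{N\be^2/2}}.
\]
Next, pick any $u>(\ln 2+\be^2/2)/\be$ (which exists for every $\be>0$) and set $c:=\be u-\ln 2-\be^2/2>0$. On the event $E_u:=\{X_{v^*}\geq uN\}$ the display above yields $\Zhatn{N}\geq e^{cN}$. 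The Mills-ratio lower bound for Gaussian tails gives $\Pa(E_u)=\Phi(-u\sqrt N)\geq C_1 N^{-1/2}\, e^{-u^2N/2}$ for $N$ large enough, where $C_1=C_1(u)>0$.

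Setting $t:=e^{cN}$ and using $\log t = cN$ to rewrite $e^{-u^2N/2} = t^{-u^2/(2c)}$,
\[
\Pa(\Zhatn{N}\geq t) \;\geq\; \Pa(E_u)\;\geq\; \frac{C_1}{\sqrt N}\, t^{-u^2/(2c)}.
\]
The remaining polynomial prefactor $N^{-1/2} = (\log t/c)^{-1/2}$ is dominated by $t^{-\ep}$ for any $\ep>0$ once $N$ is large, so it can be absorbed into a slightly larger exponent: for any $\al>u^2/(2c)$ there is a constant $C=C(\be,\al)>0$ with $\Pa(\Zhatn{N}\geq t)\geq C t^{-\al}$. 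Finitely many small values of $N$ for which $t$ is bounded are handled by further shrinking $C$.

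The only subtle point is absorbing the $1/\sqrt N$ prefactor into the stated power-law exponent, which is immediate since $\log t$ scales linearly in $N$; everything else is a direct Gaussian tail computation. Notably, this one-path approach uses no moment or distributional-limit information about $\Zhatn{N}$, so it works uniformly in $\be>0$ --- in particular it covers the low-temperature regime $\be\geq \sqrt{2\ln 2}$, which is also allowed by the hypothesis of \Cref{t:ub-gap}.
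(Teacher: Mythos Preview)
Your proof is correct and considerably simpler than the paper's. The paper proceeds by an inductive argument: it defines events $A_n = \{\exists v,\ |v|=n,\ p_{\be,v}\ge c_1,\ \be X_v \ge C_2 n\}$ and shows $\Pa\bigl(\bigcap_{m\le n} A_m\bigr)\ge e^{-c_3 n}$ by tracking, level by level, a child of the current distinguished vertex whose probability mass \emph{and} energy both stay above fixed thresholds. The auxiliary condition $p_{\be,v}\ge c_1$ is carried only to make the induction close (it controls the competing branches when passing from depth $n$ to $n+1$) and is discarded at the end, where only $\Zn{n}\ge e^{\be X_{v^*}}\ge e^{C_2 n}$ is used. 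You bypass the induction entirely by observing that $X_{0^N}\sim\cal N(0,N)$ and applying a one-shot Gaussian tail bound; this delivers the same conclusion $\Zn{N}\ge e^{\be u N}$ with the same exponential probability $e^{-\Theta(N)}$, and the resulting $(c,\al)$ are perfectly adequate for the application in \Cref{t:ub-gap}, which only needs existence of some such constants.

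One small wording quibble: when you write that the prefactor $N^{-1/2}$ ``is dominated by $t^{-\ep}$,'' you mean the reverse inequality. Since $N^{-1/2}=((\log t)/c)^{-1/2}$ decays only polynomially in $N$ while $t^{-\ep}=e^{-\ep c N}$ decays exponentially, one has $N^{-1/2}\ge C' t^{-\ep}$ for large $N$, which is exactly what lets you absorb it into a slightly larger $\al$. Your conclusion is right; only the word ``dominated'' points the wrong way.
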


\begin{proof}[Proof of Lemma~\ref{l:power-law}]

For each $n$, consider the following event 
\[
A_n := \{\exists v,\, |v|=n,\, \pv{v} \ge c_1,\, \be X_v \ge C_2 n\},
\]
for constants $c_1>0, C_2>\fc{\ln \Ea Z_{n,\be}}n = \ln 2 + \fc{\be^2}2$ to be chosen. We claim that 
\begin{align}\label{e:plt-ind-claim}
\Pa\pa{\bigcap_{m=1}^n A_m } \ge e^{-c_3 n}, \ \text{for some $c_3>0$}.
\end{align}
Now suppose this claim holds, and let $v^*$ be such that the condition in $A_n$ is satisfied for $v^*$. Note that on $\bigcap_{m=1}^n A_m$, we have 
\begin{align*}
    \Zn{n}  = \sum_{|v| = n} e^{\beta X_v} \ge e^{\beta X_{v^*}} \ge e^{C_2 n}.
\end{align*}
Thus 
\[
  \Pa\left(\Zn{n} \ge e^{C_2 n}\right) \ge e^{-c_3n}.
\]
On the other hand, observe that
\[
\Zhatn{n} \ge \exp\pa{\pa{C_2 - \fc{\ln\Ea \Zn{n}}{n}}n} \quad \Longleftrightarrow \quad \Zn{n} \ge e^{C_2 n}.
\]
Thus 
\[
\Pa\left(\Zhatn{n} \ge e^{(C_2 - \ln\E \Zn{n})n}\right) \ge e^{-c_3n}
\]
which implies the desired conclusion.

It remains to prove \eqref{e:plt-ind-claim}. 
We proceed inductively; it suffices to prove 
\[
\Pa\pa{A_{n+1} \Big| \bigcap_{m=1}^{n}A_m} \ge e^{-c_3}.
\]
For this, it suffices to show that given the existence of $v^*$ satisfying the condition in $A_n$, with probability lower-bounded by a nonzero constant, one can find a child vertex of $v^*$ which satisfies the condition in $A_{n+1}$. Namely, with positive probability, either 
\[
p_{v^*0} \ge c_1 \quad \text{and} \quad \beta X_{v^* 0}\ge C_2(n+1),
\]
or 
\[
p_{v^*1} \ge c_1 \quad \text{and} \quad \beta X_{v^* 1}\ge C_2(n+1).
\]
First, by definition, $\beta X_{v^*0} = \beta( X_{v^*} + Y_{v^*0})$. 
Let 
\begin{align}
    \label{e:e1}
    E_1 = \{\be \max\{Y_{v^*0}, Y_{v^*1}\}>C_2\}.
\end{align}
This event holds with positive probability depending only on $C_2$ and $\be$, and under $A_n$ and this event, either $\be X_{v^*0}>C_2(n+1)$ or $\be X_{v^*1}>C_2(n+1)$. 

Now write 
\begin{align*}
\Zn{n+1} &= \Zn{n} \sum_{|v|=n} \pv{v} \cdot (e^{\be Y_{v0}}+e^{\be Y_{v1}})\\
&= \Zn{n} \ba{
p_{v^*}\cdot (e^{\be Y_{v^*0}}+e^{\be Y_{v^*1}})
+ \sum_{v\ne v^*} \pv{v} (e^{\be Y_{v0}}+e^{\be Y_{v1}})
}.
\end{align*}
Note 
\[
\Ea \ba{\sum_{v\ne v^*} \pv{v} (e^{\be Y_{v0}} + e^{\be Y_{v1}})} = 2(1-\pv{v^*}) e^{\be^2/2}
\]
so by Markov's inequality,
\begin{align*}
    \Pa \pa{
     E_2
    } &\ge 1 - \fc{2e^{\be^2/2}}{c_4}\\
    \text{where }
    E_2 &=\bc{\sum_{v\ne v^*} \pv{v} (e^{\be Y_{v0}} + e^{\be Y_{v1}}) \le (1-\pv{v^*}) c_4 }.
\end{align*}
Choosing $c_4 > 2e^{\be^2/2}$, this probability is bounded below by a positive constant.

Then under the events $A_n$, $E_1$, and $E_2$,
\begin{align*}
    \max\{\pv{v^*0}, \pv{v^*1}\}
    &\ge 
    \fc{\pv{v^*}\max\bc{e^{\be Y_{v^*0}},e^{\be Y_{v^*1}}}}{\pv{v^*}(e^{\be Y_{v^*0}}+e^{\be Y_{v^*1}}) + \sum_{v\ne v^*} \pv{v}(e^{\be Y_{v0}}+e^{\be Y_{v1}}) }\\
    &\ge    
    \fc{\pv{v^*}\max\bc{e^{\be Y_{v^*0}},e^{\be Y_{v^*1}}}}{2\pv{v^*}\max\bc{e^{\be Y_{v^*0}},e^{\be Y_{v^*1}}} + \sum_{v\ne v^*} \pv{v}(e^{\be Y_{v0}}+e^{\be Y_{v1}}) }\\
    &\ge \fc{\pv{v^*}C_2}{2\pv{v^*}C_2 + \sum_{v\ne v^*} \pv{v}(e^{\be Y_{v0}}+e^{\be Y_{v1}}) }\\
    &\ge \fc{\pv{v^*}C_2}{2\pv{v^*}C_2 + (1-\pv{v^*}) c_4} \\
    &\ge \fc{c_1C_2}{2c_1C_2 + (1-c_1) c_4}.
\end{align*}
Now for any fixed $c_1>\rc2$, we can choose $C_2$ large enough depending on $c_4$ so that this is $\ge c_1$, as needed. Then $A_n \cap E_1\cap E_2\subeq A_{n+1}$. 

Finally, we note that $E_1$ and $E_2$ are independent conditional on $\sF_n$, so 
\[
\Pa\pa{A_{n+1} \Big| \bigcap_{m=1}^{n}A_m} \ge 
\Pa\pa{E_1\cap E_2 \Big| \bigcap_{m=1}^{n}A_m} \ge e^{-c_3}
\]
for some $c_3$, completing the induction step.
\end{proof}

\begin{proof}[Proof of \Cref{t:ub-gap}]
    By \Cref{l:power-law}, there exist $c', c''>0$ such that given any $c>0$, for $N$ large enough,
    \[
\Pa \pa{\Zhatn{\fl{cN}} > e^{cc'N}}
\ge e^{-cc''N}. 
    \] 
    Choose $c\le \rc 2$ small enough so that 
    \[
\Pa \pa{\Zhatn{\fl{cN}} > e^{cc'N}}
\ge 2^{-N/4}.
    \]
    Note $(\Zhatnv{N}{v})_{|v|=\ce{(1-c)N}}$ are $2^{\ce{(1-c)N}}$ i.i.d. random variables.  Hence
\begin{align}
\nonumber 
\Pa\pa{\Zhatnv{N}{v} > e^{cc'N}
\text{ for some }|v|=\ce{(1-c)N}}
&\ge 
1-(1-2^{-N/4})^{2^{\ce{(1-c)N}}}\ge
1-(1-e^{-N/4})^{2^{N/2}}\\
&\ge 1 - e^{-2^{-N/4} 2^{N/2}}
= 1-e^{-2^{N/4}}.
\label{e:ub-gap-1}
\end{align}
Suppose such a $v$ exists; choose the first such $v$ (under some fixed order). Consider the conductance of $\T^v_N$, the subtree rooted at $v$. By symmetry and Markov's inequality,
\begin{align}
\nonumber
    &\Ea \ba{\fc{\pv{v}}{\sum_{|w|=\ce{(1-c)N}} \pv{w}}}
    = 
    \rc{2^{\ce{(1-c)N}}} \sum_{|v|=\ce{(1-c)N}} \Ea \ba{\fc{\pv{v}}{\sum_{|w|=\ce{(1-c)N}} \pv{w}}} = \rc{2^{\ce{(1-c)N}}} \le \rc{2^{N/2}}\\
    \label{e:ub-gap-2}
    &\implies
    \Pa\pa{\fc{\pv{v}}{\sum_{|w|=\ce{(1-c)N}} \pv{w}}<\rc{2^{N/4}}} \ge 1-
    2^{N/4} \cdot  \Ea \ba{\fc{\pv{v}}{\sum_{|w|=\ce{(1-c)N}} \pv{w}}} = 
    1 - 2^{-N/4}.
\end{align}
Under the events in~\eqref{e:ub-gap-1} and~\eqref{e:ub-gap-2},
\begin{align*}
    \Phi &\le 
\fc{Q(\Desc^0(v), \Desc^0(v)^c)}{\min\{\pi(\Desc^0(v)),\pi( \Desc^0(v)^c)\}}\\
&\le \max\bc{
\fc{\pv{v}}{\pv{v}\pa{1 + \Zhatnv{N}{v}}} , 
\fc{\pv{v}}{\sum_{|w|=\ce{(1-c)N}}\pv{w}}
}\\
&\le \max\bc{e^{-cc'N}, 2^{-N/4}}.
\end{align*}
The result now follows from 
that the spectral gap $\ga$ satisfies $\ga \le 2\Phi$ \cite{lawler1988bounds,sinclair1989approximate}. 
\end{proof}

\section{Sequential sampler}
\label{s:seq}

The main challenge in turning \Cref{t:main-Z} into a guarantee for the sequential sampler in our main \Cref{t:main-hi-temp} is that the distribution after fixing the initial $t$ coordinates is no longer a CREM, but a tilted one. In order to utilize our result under the CREM (\Cref{t:main-Z}), we need to show a change-of-measure result between the tilted and original CREM (\Cref{l:com} in \Cref{s:com}). This is essentially a statement about contiguity as $N\to \iy$. We note that a contiguity argument was also an important part of the result \cite{EMS22} for sampling from the SK model.

For ease of presentation, we first show an infinite, non-quantitative version (\Cref{t:contiguous} in \Cref{s:contiguity}), then derive a quantitative bound for finite $n$ (\Cref{l:com-finite} in \Cref{s:com-finite}). 


We will state definitions to work for both the finite and infinite setting. 
We first define the infinite continuous random energy model as follows. 
\begin{df}\label{d:crem-iy}
    Let $a:\R_{\ge 0}\to \R_{\ge 0}$ be a non-decreasing function. Let $\T = \mathbb T_\iy:=\bigcup_{n=0}^{\iy}\{0,1\}^n$ denote the (vertices of the) infinite binary tree. 
    Define the probability measure of the infinite CREM $\PCREM = \Pan{\iy}$ as follows. The probability space is $\Om:=\R^{\T}$ with the Gaussian product measure where
    \[
\om_u \sim \begin{cases}
    \cal N(0, a(0)), & u=\phi \\
    \cal N(0, a(|u|)-a(|u|-1)), & u \in \T\bs \{\phi\}. 
\end{cases}
    \]
    Define the random variables $Y_u(\om) = \om_u$. Define a filtration with $\sF_n := \si(\set{Y_u}{|u|\le n})$, the $\si$-algebra generated by the random variables up to depth $n$.
    For $u\in \T_{\iy}$, let $X_{v_1,\ldots, v_n} = \sumo mn Y_{v_1,\ldots, v_m}$.
Define $\Znv{n+m}{v}$ and $\Zhatnv{n+m}{v}$ as in~\eqref{e:Znv} and~\eqref{e:Zhatnv}. 
Define a measure on $\{0,1\}^\N$ (for cylinder sets) as follows: for any $|v|=n$,\footnote{For the CREM, all the $\Zhatv{w}$ are well-defined almost surely: the sequence of random variables $\Zhatnv{n+m}{v}$ converges a.s. because they form a martingale that is uniformly bounded in $L^p$ for some $p>1$ by \Cref{l:lim-dist}.}
\begin{align*}
\mu_\be(C_v)
&= \fc{e^{\be X_{v}} \Zhatv{v}}{\sum_{|w|=n} e^{\be X_w} \Zhatv{w}},&
\text{where } 
C_v &= \set{vw}{w\in \{0,1\}^\N}\subeq \{0,1\}^\N\\
&&\Zhatv{a,w} &= \lim_{m\to \iy} \Zhatnv{n+m}{a,w}.
\end{align*}
It is straightforward to check this is consistent and hence defines a measure on $\{0,1\}^\N$ by Kolmogorov's extension theorem.

We also define 
\[\ZhatanN{n}{N} := \fc{\Zn{N-n}}{2^{N-n} e^{\fc{\be^2}2(a(N)-a(n))}}\quad \text{ and }\quad 
\ZhatanN{n}{\iy} = \lim_{m\to \iy} \ZhatanN{n}{n+m}.\] 
\end{df}
We define several more measures associated with a CREM (finite or infinite) as follows. We allow $N=\iy$ in the below definition (note $N-n=\iy$).
\begin{df} \label{d:dists}
Define $\muNn{n}:\Om_N \to \cal P(\{0,1\}^n)$ to be the marginal distribution of the first $n$ coordinates of $\mun{N}$, i.e., 
\[
\muNn{n}(v) = 
\mun{N}\pa{\set{vw}{w\in \{0,1\}^{N-n}}}, \quad v\in \{0,1\}^n.
\]
Similarly define $\mu_{\be}|_n$ to be the marginal distribution of the first $n$ coordinates of $\mu_\be$. 

Define the measure rooted at $v$ ($|v|=n$) as follows: $\munv{N}{v}$ is a random measure on $\Om_{N-n}$ such that for $A\subeq \Om_{N-n}$,
\[
\munv{N}{v} (A)
= \fc{\mun{N} \pa{\set{vw}{w\in A}}}{\mun{N} \pa{\set{vw}{w\in \Om_{N-n}}}}. 
\]
Let $\muv{v} = \munv{\iy}{v}$. 

Let $\nun{n}: \Om_N \to \cal P(\{0,1\}^{N-n})$ be the random measure $\munv{N}{v}$ averaged over $v$, where $v\sim \muNn{n}$. 

Define $\Qatr$ as a probability measure on $\Om_{N-n}$ follows: given $\om\in \Om_{N}$, pick $v\sim \muNn{n}$ 
and let $\om'=\bfT_v(\om)$, where $\bfT_v:\Om_{N} \to \Om_{N-n}$ is the translation map defined by 
\begin{align}\label{e:translation}
(\mathbf T_v(\om))_w = \om_{vw}.
\end{align}
Then $\Qatr$ is the distribution of $\om'$. Let $\PCREMn = \Qatri$.

For a function $a:\R_{\ge0}\to \R$, define $\bfT_n a(m) = a(n+m)$, and define $\Patr:=\Pj^{\bfT_n a}_{N-n}$.
\end{df}
It is straightforward to see that $\mu_{\be,N-n}$ on $\Qatr$ has the same distribution as $\nun{n}$ on $\Pan{N}$.
They both describe the distribution when we choose $v$ from the marginal distribution at depth $n$, and then consider the Gibbs measure associated with the subtree with depth $N-n$ starting at that $v$, where the choice of $v$ is averaged out. For $\mu_{\be,N-n}$ on $\Qatr$, the choice of $v$ is done in $\Qatr$, which for $\nun{n}$ on $\Pan{N}$, the choice of $v$ is done in $\nun{n}$.

\subsection{Change of measure}
\label{s:com}
We give a change-of-measure result between $\Qatr$, where we select a vertex according to the CREM ($v\sim \muNn{n}$) and consider the subtree, and $\Patr=\Pj^{\bfT_n a}_{N-n}$, which is equivalent to selecting a vertex independent of the CREM and considering the subtree.
\begin{lem}\label{l:com}
Consider a CREM with unnormalized covariance function $a:[0,N]\to \R_{\ge 0}$. (We allow $N=\iy$.) Then 
    $\dd{\Qatr}{\Patr} = f_n(\ZhatnN{n}{N})$, where 
    \[
f_n(z) = 
    \E^a_N\ba{\fc{2^n \pv{v_0} z}{\pv{v_0}z + \sum_{w\in \{0,1\}^n\bs\{v_0\}} \pv{w} \Zhatnv{N}{w}}}
    \text{ for any fixed }v_0\in \{0,1\}^n,
    \]
    and $f_n$ 
    is an increasing function.
\end{lem}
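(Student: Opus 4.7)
The plan is to use invariance of $\Pa$ under depth-preserving tree automorphisms to collapse the implicit sum over $v\in \{0,1\}^n$ in $\Qatr$ to a single term at an arbitrary distinguished $v_0$, then exploit the independence between the subtree rooted at $v_0$ and the remaining labels.

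For any bounded measurable $g$ on $\Om_{N-n}$, I first unpack the definition and substitute $\muNn{n}(v) = \pv{v}\Zhatnv{n\to N}{v}/\sum_w \pv{w}\Zhatnv{n\to N}{w}$ to obtain $\int g\, d\Qatr = \Ea\ba{\sum_v \muNn{n}(v)\, g(\bfT_v(\omega))}$. For each $v$, the depth-preserving automorphism interchanging the subtrees rooted at $v$ and $v_0$ leaves $\Pa$ invariant, exchanges $\pv{v}\leftrightarrow \pv{v_0}$, $\Zhatnv{n\to N}{v}\leftrightarrow \Zhatnv{n\to N}{v_0}$, and $\bfT_v\leftrightarrow \bfT_{v_0}$, while the denominator (a symmetric function of the $2^n$ terms) is unchanged. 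This collapses the sum to $2^n\,\Ea[R\cdot g(\bfT_{v_0}(\omega))]$ where $R$ is the $v_0$-summand, and incidentally confirms that $f_n(z)$ is independent of the choice of $v_0$.

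The key algebraic step is to simplify $R$. Dividing numerator and denominator by $\pv{v_0}$ and using $X_w - X_{v_0} = (X_w - X_{\Par(v_0)}) - Y_{v_0}$ factors $e^{-\be Y_{v_0}}$ out of every $w\ne v_0$ term, giving
\[
R = \fc{\Zhatnv{n\to N}{v_0}}{\Zhatnv{n\to N}{v_0} + e^{-\be Y_{v_0}}\,T}, \qquad T := \sum_{w\ne v_0} e^{\be(X_w - X_{\Par(v_0)})}\Zhatnv{n\to N}{w}.
\]
The crucial observation is that $T$ depends only on labels $Y_u$ with $u$ outside the subtree rooted at $v_0$, while $\Zhatnv{n\to N}{v_0}$ depends only on $Y_{v_0 u}$ for $|u|\ge 1$. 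Conditioning on $\omega'':= (Y_{v_0 u})_{|u|\ge 1}$ leaves $Y_{v_0}$ and $T$ independent with their $\Pa$-marginals, so $\Ea[R\mid \omega''] = \mathbb E_{Y, T}[z/(z+e^{-\be Y}T)]$ evaluated at $z=\Zhatnv{n\to N}{v_0}(\omega'')$. Applying the identical factorization inside the definition of $f_n$ recognizes this as $f_n(\Zhatnv{n\to N}{v_0}(\omega''))/2^n$.

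To finish, $\omega''$ under $\Pa$ and $(\omega'_u)_{|u|\ge 1}$ under $\Patr = \mathbb P^{\bfT_n a}_{N-n}$ are both products of centered Gaussians with variances $a(n+|u|)-a(n+|u|-1)$ at each level $|u|\ge 1$, hence equal in law; since $\ZhatnN{n}{N}$ and $f_n(\ZhatnN{n}{N})$ depend only on these coordinates, the identity $\int g\, d\Qatr = \int g\cdot f_n(\ZhatnN{n}{N})\, d\Patr$ follows, which is the claimed Radon-Nikodym derivative. Monotonicity of $f_n$ is then immediate, since $z\mapsto z/(z+c)$ is strictly increasing for each $c>0$ and monotonicity is preserved under expectation. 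The main obstacle I anticipate is purely bookkeeping: the ``root'' coordinate $\omega'_\phi\in \Om_{N-n}$ has variance $a(n)-a(n-1)$ under $\bfT_{v_0}$ (it is $Y_{v_0}$) but nominal variance $a(n)$ under $\Patr$, so the density identity is most naturally interpreted on the sub-$\sigma$-algebra generated by the non-root coordinates, on which both sides depend.
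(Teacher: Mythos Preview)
Your argument is correct and follows the same two-step skeleton as the paper: collapse the sum over $v\in\{0,1\}^n$ to a single $v_0$ by symmetry, then separate the $v_0$-subtree from its complement via independence. The execution of the independence step differs: the paper conditions on the scalar statistic $\Zhatnv{N}{v_0}$ and invokes the identity $\Pa(\bfT_{v_0}(\om)\in A\mid \Zhatnv{N}{v_0}=z)=\Patr(A\mid \ZhatnN{n}{N}=z)$ via regular conditional probability, whereas you condition on the full collection $(Y_{v_0u})_{|u|\ge 1}$ and use the explicit product-measure independence, which is more elementary and sidesteps any disintegration subtleties. Your observation about the root coordinate is well taken and not merely cosmetic: under $\bfT_{v_0}$ the root label is $Y_{v_0}\sim \cal N(0,a(n)-a(n-1))$, while under $\Patr$ it is $\cal N(0,a(n))$, so $\bfT_{v_0}(\om)$ and $\om'\sim\Patr$ do \emph{not} agree in law on the full space, and moreover $\ZhatnN{n}{N}$ as defined carries a factor $e^{\be Y_\phi}$. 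The paper's assertion of exact translation invariance glosses over this same point; in both approaches the clean statement lives on the sub-$\sigma$-algebra generated by the non-root coordinates (equivalently, with $\ZhatnN{n}{N}$ redefined without the root term), and this is all that the downstream applications require.
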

\begin{proof}
Note that we have the following translation invariance property: for fixed $v$, $\mathbf T_v(\om)$ under $\Pa$ has the same distribution as $\om$ under $\Patr$. 
Because $\ZhatnN{n}{N}(\bfT_v(\om)) = \Zhatnv{N}{v}(\om)$, we get that 
\[
\Pa (\mathbf T_v(\om)\in A \mid \Zhatnv{N}{v} = z)
= 
\Patr (\om \in A \mid \ZhatnN{n}{N} = z). 
\]
Below, we denote $\E = \Ea$. Fix $v_0\in \{0,1\}^n$. We have
    \begin{align*}
    \Patr(A) 
    &= \E\ba{\Patr(A\mid \ZhatnN{n}{N})}.
\end{align*}
By definition of $\Qatr$ and symmetry, for fixed $v_0$ with $|v_0|=n$, we have 
\begin{align*}
    \Qatr  (A)&= 
    \sum_{|v|=n}\E\ba{\E \ba{ \fc{\pv{v}\Zhatnv{N}{v}}{\sum_{|w|=n} \pv{w} \Zhatnv{N}{w}} \Pa(\mathbf T_v(\om)\in A\mid \Zhatnv{N}{v}) \mid (\Zhatnv{N}{v'})_{|v'|=n}, (\pv{v'})_{|v'|=n}}} \\
    &= \E\ba{\E\ba{\fc{2^n \pv{v_0}\Zhatnv{N}{v_0}}{\sum_{|w|=n}\pv{w} \Zhatnv{N}{w}} \Patr(A \mid \ZhatnN{n}{N} = \Zhatnv{N}{v_0}) \mid (\Zhatnv{N}{v})_{|v|=n}, (\pv{v})_{|v|=n}}}\\
    &= \E\ba{\E\ba{\fc{2^n \pv{v_0}\Zhatnv{N}{v_0}}{\sum_{|w|=n}\pv{w} \Zhatnv{N}{w}}\E\ba{ \Patr(A \mid \ZhatnN{n}{N} = \Zhatnv{N}{v_0}) \mid (\Zhatnv{N}{v})_{|v|=n}, (\pv{v})_{|v|=n}} \mid \Zhatnv{N}{v_0}}}\\
    &= \E\ba{\E\ba{\fc{2^n \pv{v_0}\Zhatnv{N}{v_0}}{\sum_{|w|=n}\pv{w} \Zhatnv{N}{w}} \Patr(A \mid \ZhatnN{n}{N} = \Zhatnv{N}{v_0}) \mid \Zhatnv{N}{v_0}}}\\
    &= \E\ba{\E\ba{\fc{2^n \pv{v_0}\Zhatnv{N}{v_0}}{\sum_{|w|=n}\pv{w} \Zhatnv{N}{w}}  \mid \Zhatnv{N}{v_0}}\Patr(A \mid \ZhatnN{n}{N} = \Zhatnv{N}{v_0})}.
\end{align*}
where:
\begin{itemize}
\item In the first line, we sum over the vertices at depth $n$, of the probability we choose that vertex times the CREM disorder measure at depth $N-n$ for the subtree at that vertex.
\item In the second line, $\Patr(A\mid \ZhatnN{n}{N}=\Zhatnv{N}{v_0})$ is interpreted as the function $\Patr\pa{A\mid \ZhatnN{n}{N}=\cdot}$ with the value of $\Zhatnv{N}{v_0}$ substituted in, i.e., the argument of the outer expectation is interpreted as \[\E_{\om\sim \Pa} [\cdots \E_{\om'\sim \Patr} [\one_{\om'\in A} \mid \ZhatnN{n}{N}(\om') = \Zhatnv{N}{v_0}(\om)]].\] 
    \item In the third line we use $\si((\Zhatnv{N}{v})_{|v|=n},(\pv{v})_{|v|=n})$-measurability of the coefficient and the chain rule of conditional independence.
    \item In the fourth line we use independence 
between $\Zhatnv{N}{v_0}$ and $\si((\Zhatnv{N}{v})_{|v|=n, v\ne v_0},(\pv{v})_{|v|=n})$.

\end{itemize}
Therefore, because $\Zhatnv{N}{v_0}$ under $\Pa$ has the same distribution as $\ZhatnN nN$ under $\Patr$, and $\Zhatv{a,v_0}$ is independent from the rest of the random variables,
\begin{align*}
    \Qatr  (A)&= 
    \E\ba{f_n(\wh Z^a)\cdot  \Patr (A\mid \Zhatn{N-n})}&\implies\dd{\Qatr }{\Patr}
    &= f_n(\wh Z^a).\\
    \text{where }
    f_n(z) &= 
    \E\ba{\fc{2^n \pv{v_0} z}{\pv{v_0}z + \sum_{w\in \{0,1\}^n\bs \{v_0\}} \pv{w} \Zhatv{w}}}
\end{align*}
Because the function inside the expectation is monotonically increasing in $z$, $f_n$ is monotonically increasing. 


\end{proof}

\subsection{Contiguity for the infinite CREM}
\label{s:contiguity}

The following theorem says that for the infinite CREM, the sequence of (averaged) distributions encountered after sampling some coordinates is contiguous with the CREM.

\begin{thm}\label{t:contiguous}
Consider the CREM with unnormalized covariance function $a:\R_{\ge 0}\to \R_{\ge 0}$, and suppose $\amx = \sup a'$, $\be < \sfc{2\ln 2}{\amx}$. Then 
    \begin{enumerate}
        \item The sequence of distributions of $\ZhatnN{n}{\iy}$ under $\Qatri$ is tight. 
        \item 
        $(\Qatri)_{n\ge 0}$ is contiguous with $(\Patri)_{n\ge 0}$.
    \end{enumerate}
\end{thm}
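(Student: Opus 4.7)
The plan is to extend the change-of-measure identity of Lemma \ref{l:com} to the infinite setting, and then use it together with the $L^p$ moment bound of Lemma \ref{l:lim-dist} and the Paley-Zygmund lower tail bound of Lemma \ref{l:Z-big} to control the likelihood ratio uniformly in $n$. Since the martingale $(\Zhatnv{|w|+m}{w})_{m\ge 0}$ is $L^p$-bounded for some $p>1$ by Lemmas \ref{l:mg} and \ref{l:lim-dist}, the limits $\Zhatv{w}$ defining the infinite measure exist $\Pa$-a.s., and repeating the computation of Lemma \ref{l:com} yields $d\Qatri/d\Patri = f_n(\ZhatnN{n}{\iy})$ with $f_n$ monotone increasing and bounded above by $2^n$. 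Crucially, Lemma \ref{l:renorm} ensures the constants in Lemmas \ref{l:lim-dist} and \ref{l:Z-big} can be taken uniform in $n$ when applied to each shifted CREM $\Pj^{\bfT_n a}$, depending only on $g$.

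For tightness of $\ZhatnN{n}{\iy}$ under $\Qatri$, I would use the identity
\[
\Qatri(\ZhatnN{n}{\iy} > M) = \Ea\left[\sum_{|v|=n}\frac{\pv{v}\Zhatv{v}\one_{\Zhatv{v}>M}}{\sum_{|w|=n}\pv{w}\Zhatv{w}}\right]
\]
and condition on $\sF_n$, so that $(\Zhatv{v})_{|v|=n}$ become i.i.d.\ with uniformly bounded $p$-th moment and uniform lower tail control. Lemma \ref{l:big-contrib} then gives a conditional bound of the form $\exp(-\ep_2^2/(2\max_v\pv{v})) + O(M^{-(p-1)})$, whose second part vanishes uniformly in $n$ as $M\to\iy$. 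I would handle the first part via a case split on whether the Gibbs measure at level $n$ is ``delocalized'' ($\max_v\pv{v}\le \eta$, where the exponential itself can be made arbitrarily small by choosing $\eta$ small) or ``localized'' ($\max_v\pv{v}>\eta$, with unique maximizer $v^*$), in which case I would estimate the $v=v^*$ contribution by $\Pa(\Zhatv{v^*}>M)\le C/M^p$ and the $v\ne v^*$ contribution by bounding the denominator below by $\pv{v^*}\Zhatv{v^*}\ge\eta\Zhatv{v^*}$ and invoking Lemmas \ref{l:Etail-from-moment} and \ref{l:Z-big} on the dominant subtree.

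For contiguity, by Le Cam's first lemma it suffices to show that $T_n := f_n(\ZhatnN{n}{\iy})$ is uniformly integrable under $\Patri$, equivalently tight under $\Qatri$. Given the tightness of $\ZhatnN{n}{\iy}$ from part 1, the monotonicity of $f_n$ gives $\Qatri(T_n > K, \ZhatnN{n}{\iy}\le M) = 0$ whenever $K > \sup_n f_n(M)$, so $\Qatri(T_n > K) \le \Qatri(\ZhatnN{n}{\iy} > M)$, reducing the task to establishing $\sup_n f_n(M) < \infty$ for each fixed $M$. I would split the expectation defining $f_n(M)$ on the size of $\pv{v_0}$: the event $\pv{v_0}>1/2$ has probability at most $2\cdot 2^{-n}$ by Markov using $\Ea[\pv{v_0}]=2^{-n}$ and contributes at most $O(1)$; on the complement, the pointwise bound $\frac{2^n\pv{v_0}M}{\pv{v_0}M+S_{v_0}}\le 2^n\pv{v_0}M/S_{v_0}$ together with a Paley-Zygmund lower bound on $S_{v_0}$ (via Lemmas \ref{l:prob-small} and \ref{l:Z-big}) yields $\Ea[\pv{v_0}/S_{v_0}\,\one_{\pv{v_0}\le 1/2}]\lesssim 2^{-n}$, giving the desired uniform bound.

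The main obstacle in both parts is controlling the rare ``localized'' configurations at level $n$ where the Gibbs measure concentrates on a few vertices and the generic estimates from Lemma \ref{l:big-contrib} degrade; handling these requires isolating the dominant vertex and invoking the lower tail control of Lemma \ref{l:Z-big} on the corresponding subtree, which is precisely where the high-temperature assumption $\be<\sqrt{2\ln 2/\amx}$ becomes essential.
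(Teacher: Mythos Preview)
Your overall architecture matches the paper's---the change-of-measure identity $d\Qatri/d\Patri=f_n(\ZhatnN{n}{\iy})$ with $f_n$ increasing, then tightness of $\ZhatnN{n}{\iy}$ under $\Qatri$, then contiguity via monotonicity of $f_n$---but the treatment of the ``localized'' regime has a genuine gap.

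In part 1, after applying Lemma~\ref{l:big-contrib} conditionally on $\sF_n$, you are left with the term $\exp(-\ep_2^2/(2\max_v\pv{v}))$. On the event $\{\max_v\pv{v}>\eta\}$ you revert to the original ratio and bound the $v\ne v^*$ piece by $\sum_{v\ne v^*}\pv{v}\Zhatv{v}\one_{\Zhatv{v}>M}/(\eta\,\Zhatv{v^*})$. Taking expectation and using independence of $\Zhatv{v^*}$ from the rest, this becomes $\eta^{-1}\,\E[1/\Zhatv{v^*}]\cdot\E[W\one_{W>M}]$, so you need $\E[1/\Zhatv{v^*}]<\infty$. Lemma~\ref{l:Z-big} only gives $\Pj(\Zhatv{v}>\ep_1)\ge\ep_2$; it says nothing about integrability of the reciprocal, and no such bound is available in the paper. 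The same issue recurs in your bound on $\sup_n f_n(M)$: the estimate $\E[\pv{v_0}/S_{v_0}\,\one_{\pv{v_0}\le 1/2}]\lesssim 2^{-n}$ via Lemmas~\ref{l:prob-small} and~\ref{l:Z-big} fails because $\max_{w\ne v_0}\pv{w}$ need not be small on $\{\pv{v_0}\le 1/2\}$ (two vertices could each carry mass $1/2$), and again one is left needing $\E[1/W]<\infty$.

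The paper avoids this entirely by a different mechanism: rather than bounding the localized contribution directly, it uses Corollary~\ref{c:max-pv} (i.e., Lemma~\ref{l:ratio-1}(2)) to show $\Pa(\max_{|v|=n}\pv{v}>\eta)\to 0$ as $n\to\infty$ for any fixed $\eta$. Concretely, it couples $\pv{v}$ with $\ptv{v}$ (equal to $\pv{v}$ on the delocalized event and $2^{-n}$ otherwise), so that Lemma~\ref{l:big-contrib} applies with $\max_j\ptv{j}$ deterministically small, and the residual ``bad'' event $\{\max_v\pv{v}>\eta\}$ is simply absorbed as a vanishing probability. Tightness for the finitely many small $n$ is automatic. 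If you replace your direct localized-case estimate by this observation, your argument for part~1 goes through; part~2 then proceeds exactly as you (and the paper) outline, modulo the uniform bound $\sup_n f_n(L)<\infty$, which the paper also asserts.
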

\begin{proof}
    For a value of $M$ to be chosen, 
    let $\ptv{v} = \pv{v}$ when $\max_{|v|=n}\pv{v} \le M$ and $\ptv{v} = \rc{2^n}$ otherwise. 
We have by definition of $\Qatri$ and a coupling argument that
\begin{align*}
    &\Qatri (\ZhatnN{n}{\iy}\ge L) \\
    &= \Eatri\ba{\fc{\sum_{|v|=n} \pv{v} h_{\ge L} (\Zhatv{v})}{\sum_{|v|=n} \pv{v} \Zhatv{v}}}\\
    &= \Pai\pa{\max_{|v|=n} \pv{v} \ge M}
    + \Eatri\ba{\fc{\sum_{|v|=n} \ptv{v} h_{\ge L} (\Zhatv{v})}{\sum_{|v|=n}\ptv{v} \Zhatv{v}}}\\
    &\le  \Pai\pa{\max_{|v|=n} \pv{v} \ge M}
    + \ep \Pai\ba{\fc{\sum_{|v|=n} \ptv{v} h_{\ge L} (\Zhatv{v})}{\sum_{|v|=n}\ptv{v} \Zhatv{v}} \le \ep}
    + \Pai\ba{\fc{\sum_{|v|=n} \ptv{v} h_{\ge L} (\Zhatv{v})}{\sum_{|v|=n}\ptv{v} \Zhatv{v}} > \ep}.
\end{align*}
The second term goes to 0 as $\ep\to 0$. 
For fixed $\ep$, the third term goes to 0 uniformly in $n$, as $M\to 0$ and $L\to \iy$ by \Cref{l:big-contrib}. For fixed $M$, the first term goes to 0 as $n\to \iy$ by \Cref{c:max-pv}. Thus, $\Qatri (\ZhatnN{n}{\iy}\ge L)$ is bounded (uniformly in $n$) by a function of $L$ going to 0 as $L\to \iy$. This shows tightness.

Now let $A_n$ be a sequence such that $\Patri(A_n)\to 0$. Then for any $L$, letting $f_n$ be as in \Cref{l:com},
\begin{align*}
    \Qatri(A_n) 
    &\le \Qatri\pa{\{\ZhatnN{n}{\iy} \le L\} \cap A_n} + 
     \Qatri\pa{\{\ZhatnN{n}{\iy}  > L\}}\\
    &\le  \Patri(A_n) \cdot f_n(L) + \Qatri\pa{\{\ZhatnN{n}{\iy}  > L\}}.
\end{align*}
    By tightness, as $L\to \iy$, $ \Qatri\pa{\{\ZhatnN{n}{\iy}  > L\}}\to 0$ uniformly in $n$. For any $L$, $f_n(L)$ is bounded uniformly in $n$. Hence $\Qatri(A_n)\to 0$ as $n\to \iy$, showing contiguity.
\end{proof}

\subsection{Quantitative change-of-measure for finite CREM}
\label{s:com-finite}

We now make this quantitative for finite-depth trees.


\begin{lem}\label{l:com-finite}
    Under \Cref{a:hi-temp}(\ref{i:1}) ($\be<\bmin$), 
    the following hold.
\begin{enumerate}
    \item 
    We have the bound
\begin{align*}
\Qatr(\ZhatnN{n}{N}\ge z) & \le 
\exp\pa{-\Om\pf{{g}^4 n}{\amx}}
        + \exp\pa{-{g}^{O(1/g)} e^{\Om({g}^2n)}}
        + {g}^{-O(1/g)} z^{\Om(1/g)}\\
\shortintertext{when $n\ge \fc{C\amx}{{g}^4}\ln \pf{\amx}{g}$ and}
\Qatr(\ZhatnN{n}{N}\ge z) & \le 
2^n\Patr(\Zhatn{N}\ge z)
\end{align*}
for all $n$.
    \item For $0<\ep<1$, $\de = \pf{\ep g}{N\amx}^{\Om\pf{\amx}{{g}^4}}$, the following holds:
    \[
            \text{If }
            \Patr(A)\le \de,
            \text{ then }
            \Qatr(A) \le \ep.
    \]
\end{enumerate}
\end{lem}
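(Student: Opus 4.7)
The plan is to handle the two bounds in Part 1 separately and then reduce Part 2 to them. The second inequality of Part 1 is immediate: \Cref{l:com} gives $\dd{\Qatr}{\Patr}=f_n(\ZhatnN{n}{N})$, and since the integrand defining $f_n(z)$ is manifestly bounded by $2^n$, we have $f_n\le 2^n$ uniformly, hence $\Qatr(E)\le 2^n\Patr(E)$ for every measurable $E$. Applying this to $E=\{\ZhatnN{n}{N}\ge z\}$ gives the claim.

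For the first inequality of Part 1, I would follow the strategy of \Cref{t:contiguous} but make it quantitative. Expanding via the definition of $\nun{n}$,
\[
\Qatr(\ZhatnN{n}{N}\ge z)=\Eatr\ba{\fc{\sum_{|v|=n}\pv{v}\,h_{\ge z}(\Zhatnv{N}{v})}{\sum_{|v|=n}\pv{v}\,\Zhatnv{N}{v}}}.
\]
I would couple $(\pv{v})$ to truncated weights $(\ptv{v})$ that agree with $\pv{v}$ on the event $E=\{\max_{|v|=n}\pv{v}\le e^{-c_1 g^2 n}\}$ and equal $2^{-n}$ on $E^c$. Under the hypothesis $n\ge C\amx g^{-4}\ln(\amx/g)$, \Cref{c:max-pv} gives $\Pa(E^c)\le\exp(-\Om(g^4 n/\amx))$, producing the first of the three terms. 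Conditional on $\sF_n$ the variables $(\Zhatnv{N}{v})_{|v|=n}$ are i.i.d.\ with the law of $\Zhatn{N-n}$ under the renormalized CREM from \Cref{l:renorm}; \Cref{l:lim-dist} gives $\Ea[(\Zhatnv{N}{v})^p]\le O(1/g^2)$ with $p-1\gtrsim g$, and \Cref{l:Z-big} gives $\Pa(\Zhatnv{N}{v}\ge 1/2)\ge g^{O(1/g)}$. Applying \Cref{l:big-contrib} part 2 to the truncated weights with $\ep_1=1/2$ and $\ep_2=g^{O(1/g)}$ then produces the remaining two terms, $\exp(-\ep_2^2/(2\max\ptv{v}))\le\exp(-g^{O(1/g)}e^{\Om(g^2 n)})$ and $g^{-O(1/g)}z^{-\Om(g)}$.

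For Part 2, I would combine the change-of-measure with the monotonicity of $f_n$: for any threshold $L$,
\[
\Qatr(A)\le f_n(L)\Patr(A)+\Qatr(\ZhatnN{n}{N}>L),
\]
and split on whether $n\le n_0:=C\amx g^{-4}\ln(\amx/g)$. For $n\le n_0$, use $f_n(L)\le 2^n\le 2^{n_0}=(\amx/g)^{O(\amx/g^4)}$ together with the second bound of Part 1 and \Cref{l:lim-dist} to kill the tail with a polynomial $L$; a direct log-computation then verifies $2^{n_0}\delta\le\ep/2$ with $\delta=(\ep g/(N\amx))^{\Om(\amx/g^4)}$ for an appropriate implicit constant. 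For $n>n_0$, use the first bound of Part 1 with $L=(N\amx/(\ep g))^{O(1/g)}$ to force $\Qatr(\ZhatnN{n}{N}>L)\le\ep/2$. The main obstacle is bounding $f_n(L)$ in this large-$n$ regime, since $f_n(L)\le 2^n$ is too weak; I would refine this to $f_n(L)\le O(L)+2^n\Pa(\text{bad})$ by noting that on the symmetric event where $\max_v\pv{v}\le e^{-c_1 g^2 n}$ (\Cref{c:max-pv}), $\Zhatn{N}/\Zhatn{n}\ge 1/2$ (\Cref{l:ratio-1}), and $\max_v\Zhatnv{N}{v}$ is controlled (Markov combined with \Cref{l:lim-dist}), the denominator $B_{v_0}=\Zhatn{N}/\Zhatn{n}-\pv{v_0}\Zhatnv{N}{v_0}$ is $\gtrsim 1$, so $\pv{v_0}L/(\pv{v_0}L+B_{v_0})\lesssim\pv{v_0}L$; summing symmetrically over $v_0$ uses $\sum_v \Ea[\one_{\text{good}}\pv{v}]\le 1$ to yield the $O(L)$ contribution. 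Tuning the truncation scales so that both $L\delta$ and $2^n\Pa(\text{bad})\cdot\delta$ are $\le\ep/4$ matches the stated $\delta$.
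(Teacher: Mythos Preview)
Your Part 1 is essentially the paper's argument: the same coupling to truncated weights $(\ptv{v})$, the same invocations of \Cref{c:max-pv}, \Cref{l:Z-big}, \Cref{l:lim-dist}, and \Cref{l:big-contrib}, and the same observation $f_n\le 2^n$ for the second inequality.

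The gap is in Part 2 for large $n$. Your decomposition $\Qatr(A)\le f_n(L)\,\Patr(A)+\Qatr(\ZhatnN{n}{N}>L)$ forces you to control $f_n(L)$, and your refinement $f_n(L)\le O(L)+2^n\Pa(\text{bad})$ does not close. The dominant bad event is $\{\max_{|v|=n}\pv{v}>e^{-c_1g^2n}\}$, and the only available bound (\Cref{c:max-pv}) gives $\Pa(\text{bad})\le e^{-\Om(g^4 n/\amx)}$. When $g^4/\amx<\ln 2$---the generic regime---the product $2^n\Pa(\text{bad})$ grows like $e^{(\ln 2-\Om(g^4/\amx))n}$, so requiring $2^n\Pa(\text{bad})\cdot\de\le\ep/4$ uniformly in $n\le N$ forces $\de$ to be \emph{exponentially} small in $N$, far smaller than the stated $\de=(\ep g/(N\amx))^{\Om(\amx/g^4)}$. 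No retuning of the truncation scale helps, since the decay rate in \Cref{c:max-pv} is intrinsically $g^4/\amx$.

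The paper avoids bounding $f_n(L)$ altogether via a rearrangement argument. Set $z=1/\de$; Markov gives $\Patr(\ZhatnN{n}{N}\ge z)\le\de$. Since $\dd{\Qatr}{\Patr}=f_n(\ZhatnN{n}{N})$ with $f_n$ monotone (\Cref{l:com}), among all events $A$ with $\Patr(A)\le\de$ the one maximizing $\Qatr(A)=\int_A \dd{\Qatr}{\Patr}\,d\Patr$ is precisely $\{\ZhatnN{n}{N}\ge z\}$. Hence $\Qatr(A)\le\Qatr(\ZhatnN{n}{N}\ge 1/\de)$ directly, and Part 1 finishes: the first bound handles $n\gtrsim\fc{\amx}{g^4}\ln\fc{N\amx}{\ep g}$, and the crude bound $\Qatr(A)\le 2^n\de$ handles smaller $n$.
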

\begin{proof}
Let $f_n$ be as in \Cref{l:com}.
        Let $M=e^{-c_1{g}^2n}$ where $c_1$ is as in \Cref{l:ratio-1}(2). Let $\ptv{v} = \pv{v}$ when $\max_{|v|=n}\pv{v} \le M$ and $\ptv{v} = \rc{2^n}$ otherwise. 
By \Cref{l:Z-big} (in conjunction with \Cref{l:renorm}), we have $\Pj\pa{\ZhatnN{n}{N}\ge \ep_1}\ge \ep_2$ for $\ep_1=\rc 2$ and some $\ep_2= {g}^{O(1/g)}$. 
We have by a coupling argument that
    \begin{align*}
    \Qatr(\ZhatnN{n}{N}\ge z) 
    &= \E_{\be, N-n}^{a}\ba{\fc{\sum_{|v|=n} \pv{v} h_{\ge z} (\wh Z^v_{\beta, N-n})}{\sum_{|v|=n} \pv{v} \wh Z^v_{\beta, N-n}}}\\
    &= \Pa\pa{\max_{|v|=n} \pv{v} \ge M}
    + \E_{\be, N-n}^{a}\ba{\fc{\sum_{|v|=n} \ptv{v} h_{\ge z} (\wh Z^v_{\beta, N-n})}{\sum_{|v|=n}\ptv{v} \wh Z^v_{\beta, N-n}}}\\
    &\le 
    e^{-\fc{c_2{g}^4n}{\amx}}
    + 
    \exp\pa{-\fc{\ep_2^2}{2e^{-c_1{g}^2n}}}
    + \fc{4C}{\ep_1\ep_2 z^{p-1}}
\end{align*}from plugging in the bounds in \Cref{l:ratio-1}(2) (in conjunction with \Cref{l:renorm}) and
\Cref{l:big-contrib}, when 
$n\ge \fc{C\amx}{{g}^4}\ln \pf{\amx}{g}$. Substituting in $\ep_1$, $\ep_2$ gives the first part of item 1. Noting that $f_n(z)\le 2^n$ gives the second part of item 1. 

For item 2, note that by 
    Markov's inequality, 
    \[
\Patr\pa{\ZhatnN{n}{N}\ge \rc \de}
\le \de.
    \]
    Let $z=\rc \de$ and  $L = f_n(z)$; because $f_n$ is increasing by \Cref{l:com}, 
    \[
\ZhatnN{n}{N}\ge z \iff \dd{\Qatr}{\Patr}\ge L. 
    \]
    Then for any $A$ such that $\Patr(A)\le \de$, we have the stochastic domination relation under $\Patr$:
    \[
\dd{\Qatr}{\Patr}\one_A \preceq 
\dd{\Qatr}{\Patr}\one_{\dd{\Qatr}{\Patr}\ge L}
= \dd{\Qatr}{\Patr}\one_{\ZhatnN{n}{N}\ge z}.
   \]
   Therefore,
   \begin{align*}
       \Qatr(A) &= \int_{A}
       \dd{\Qatr}{\Patr}\,d\Patr\\
       &\le 
        \int_{\wh Z_{\be, N-n}\ge z}
        \dd{\Qatr}{\Patr}\,d\Patr
        = \Qatr (\wh Z_{\be, N-n}\ge z)\\
        &\le \exp\pa{-\Om\pf{{g}^4 n}{\amx}}
        + \exp\pa{-{g}^{O(1/g)} e^{\Om({g}^2n)}}
        + {g}^{-O(1/g)} z^{-\Om(1/g)}\\
        &\le \exp\pa{-\Om\pf{{g}^4 n}{\amx}}
        + \exp\pa{-{g}^{O(1/g)} e^{\Om({g}^2n)}}
        + {g}^{-O(1/g)} \de^{\Om(1/g)}.
   \end{align*}
   when $n = \Om \pa{\fc{\amx}{{g}^4}\ln \pf{\amx}{g}}$.
   To make this $\le \fc{\ep }{2N}$, it suffices
   for $n=\Om\pa{\fc{\amx}{{g}^4}\ln \pf N\ep}$, $n=\Om\pa{\rc{{g}^2} \pa{\rc{g} \ln \prc{g} + \ln \ln \fc N\ep}}$, and $\de={g}^{O(1)} \pf{\ep}{N}^{O(g)}$. Putting the conditions on $n$ together, it suffices for $n=\Om\pa{\fc{\amx}{{g}^4}\ln \pf{N\amx}{\ep g}}$ for this bound. 
   For $n= O\pa{\fc{\amx}{{g}^4}\ln \pf{N\amx}{\ep g}}$, we have the bound
   \[
\Qatr(A) \le 2^n \de,
   \]
   so it suffices for $\de = \pf{\ep g}{N\amx}^{\Om\pf{\amx}{{g}^4}}$. 
\end{proof}

Generically, the ability to estimate the partition function under the CREM combined with a change-of-measure bound implies that we can also estimate the partition functions for the tilted measure after fixing some coordinates, and hence allows us to sequentially sample from the CREM Gibbs measure. The proof is a sequential coupling argument.
\begin{lem}\label{l:seq-helper}
Consider a CREM with unnormalized covariance function $a(x)$.
    Suppose that 
    the following hold.
    \begin{enumerate}
        \item (Good approximation under CREM) Under the CREM with unnormalized covariance function $a$, we have an algorithm that computes an approximation $\wt Z_{\be, N-n}$ where 
        \[
\ab{\fc{\ZtnN{n}{N}}{\ZhatnN{n}{N}}-1}\le \fc{\ep}{4N}
        \]
        with probability at least $1-\fc{\de}2$.
        \item (Change of measure bound)
        The following holds for all events $A$:
        \begin{align*}
            \text{If }
            \Patr(A)\le \de,
            \text{ then }
            \Qatr(A) \le \fc{\ep}{2N}.
        \end{align*}
    \end{enumerate}
    Then if we run \Cref{a:seq} using the approximation algorithm, then 
\[
\Ean{N} \TV(\wh \mu, \mun{N}) \le \ep. 
\]
\end{lem}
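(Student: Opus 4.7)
The plan is to couple \Cref{a:seq} step by step with the idealized exact sequential sampler, which at step $t$ draws the next coordinate from the true conditional $\pi^*_{v,\om}(x)\propto e^{\be Y_{vx}}\Zhatnv{N}{vx}$; the approximate sampler uses $\tilde\pi_{v,\om}(x)\propto e^{\be Y_{vx}}\Ztnv{N}{vx}$. A short algebraic bound shows that if both multiplicative errors $|\Ztnv{N}{vx}/\Zhatnv{N}{vx}-1|$ are at most $\ep/(4N)$, then $\TV(\pi^*_{v,\om},\tilde\pi_{v,\om})\le \ep/(4N)$. The final depth-$m$ subtree is sampled exactly and identically by both samplers (via the fixed weights $e^{\be(X_{vw}-X_v)}$), so it contributes nothing to the TV.

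Applying the chain rule for total variation to the $(N-m)$-fold product structure over the first $N-m$ coordinates gives
\[
\E_\om \TV(\wh\mu,\mun{N})\le \sum_{t=1}^{N-m}\E_\om\E_{V_{t-1}\sim\muNn{t-1}(\om)}\TV(\pi^*_{V_{t-1},\om},\tilde\pi_{V_{t-1},\om})\le \sum_{t=1}^{N-m}\pa{\fc{\ep}{4N}+q_t},
\]
where $q_t$ is the probability of the bad event $F_t=\{\text{the approximation at }V_{t-1}0\text{ or }V_{t-1}1\text{ is bad}\}$, taken under the joint law with $\om\sim\Pan N$ and $V_{t-1}\sim\muNn{t-1}(\om)$.

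To bound $q_t$ we invoke the change-of-measure input. By \Cref{d:dists}, the subtree $\bfT_{V_{t-1}}(\om)$ has marginal law $\mathbb Q^a_{\be,t-1\to N}$, and $F_t$ is a function of this subtree alone. Under the parent's subtree law $\Pj^{\bfT_{t-1}a}_{N-t+1}$, the two children's subtrees are independent and each marginally distributed as $\Pj^{\bfT_{t}a}_{N-t}$; applying Assumption~1 separately to each child and union-bounding the two $\de/2$ failure probabilities yields $\Pj^{\bfT_{t-1}a}_{N-t+1}(F_t)\le \de$. Assumption~2 then transfers this to $\mathbb Q^a_{\be,t-1\to N}(F_t)\le \ep/(2N)$. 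Summing,
\[
\E_\om\TV(\wh\mu,\mun{N})\le (N-m)\pa{\fc{\ep}{4N}+\fc{\ep}{2N}}\le \fc{3\ep}{4}\le \ep.
\]

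The main obstacle is precisely this change-of-measure step: a naive analysis would only bound the failure probability of the $\wt Z$-approximation under the original CREM $\Pan N$, but the sequential sampler navigates subtrees whose law is tilted by the previously chosen prefix, and the tilting is exactly through the (random) partition function that one wants to approximate. Assumption~2 (whose quantitative version is \Cref{l:com-finite}) is exactly what transfers the high-probability bound from $\Patr$ to $\Qatr$, and is therefore essential rather than incidental to the argument.
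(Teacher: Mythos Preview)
Your proof is correct and follows essentially the same approach as the paper's: both run a step-by-step coupling between the approximate and exact sequential samplers, bound the per-step TV by the approximation error on the good event plus the probability of the bad event, and use the change-of-measure hypothesis to transfer the bad-event bound from $\Patr$ to $\Qatr$. The only differences are cosmetic---you phrase the coupling as a TV chain rule while the paper writes an explicit induction with an auxiliary $v_{n+1}^*$, and your algebraic bound $\TV(\pi^*,\tilde\pi)\le \ep/(4N)$ on the good event (via $AB/(A+B)^2\le 1/4$) is a bit sharper than the paper's appeal to \Cref{l:approx-frac}.
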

\begin{proof}
    Let $\wh \mu_n$ be the distribution of the vertex $\wh v$ with $|\wh v|=n$, obtained at the $n$th step of the algorithm.
    We use a coupling argument to inductively show that 
    \[
\E \TV(\wh \mu_n, \muNn{n}) \le \fc{\ep n}{N}. 
    \]
    This holds for $n=0$.
    For the induction step, consider a coupling between $\wh v_n\sim \wh \mu^n$ and $v_n\sim \mu_{\be}^n$ such that $\Pj(\wh v_n \ne v_n)\le \fc{\ep n}{N}$. Consider the bad events 
    \[B_x = 
    \bc{
    \ab{\fc{\wt Z^{vx}_{\be}}{\wh Z^{vx}_{\be}}-1}> \fc{\ep}{4N}
    }, \quad x\in \{0,1\}.\]
    By the first assumption applied to $n+1$, 
    when $v$ is fixed, $B_0\cup B_1$ has probability at most $\de$. 
    By the second assumption, for $v=v_n\sim \muNn{n}$, $B_0\cup B_1$ occurs with probability at most $\fc{\ep}{2N}$. 
    Excluding this bad event, by \Cref{l:approx-frac}, 
    \begin{align}
    \label{e:est-next-prob}
    \ab{
    \fc{p_{v0} \wt Z^{v0}_\be}{p_{v0} \wt Z^{v0}_\be + p_{v1} \wt Z^{v1}_\be} - 
    \fc{p_{v0} \wh Z^{v0}_\be}{p_{v0} \wh Z^{v0}_\be + p_{v1} \wh Z^{v1}_\be}} \le \fc{\ep}{N}\end{align}
    and letting $\wh v_{n+1}$, $v_{n+1}^*$ be obtained as $\wh v_n x$ and $v_nx$, respectively, where $x=0$ with probability $\fc{p_{v0} \wt Z^{v0}_\be}{p_{v0} \wt Z^{v0}_\be + p_{v1} \wt Z^{v1}_\be}$, we can couple so that $\Pj(\wh v_{n+1}\ne  v_{n+1}^*)\le \fc{\ep n}{N}$. By \eqref{e:est-next-prob}, we can also couple $v_{n+1}^*$ and $v_{n+1}\sim \muNn{n+1}$ so that $\Pj(v_{n+1}^*\ne  v_{n+1})\le \fc{\ep}N$. This completes the induction step.
\end{proof}

\begin{proof}[Proof of \Cref{t:main-hi-temp} with sequential sampler]
    We verify the two assumptions in \Cref{l:seq-helper}.
    
    The first assumption follows from taking 
    $\ZtnN{n}{N} = \ZhatnN{n}{n+m}$ 
    for 
    \begin{align}\label{e:m}
    m\ge C\pa{\amx {g}^{-4} \ln \pf 1{g\de} + {g}^{-2} \ln \pf {N}\ep}
    \end{align}
    for an appropriate constant $C$ (where we set $\Zhatn{N'} = \Zhatn{N}$ for $N'\ge N$), and using \Cref{t:main-Z} together with \Cref{l:renorm}.
    
    For the second assumption, by \Cref{l:com-finite}, it suffices to take $\de = \pf{\ep g}{N\amx}^{\Om\pf{\amx}{g^4}}$.
    Plugging back in \eqref{e:m}, it suffices for
   \[
m \ge C \amx^2 g^{-8} \ln \pf{N\amx}{\ep g}
   \]
   for large enough constant $C$. Applying \Cref{l:seq-helper} then gives
   \[
\Ean{N} \TV(\wh \mu, \mun{N})\le \ep.
   \]
   Replacing $\ep$ by $\ep\de$, we obtain that $\Ean{N} \TV(\wh \mu, \mun{N})\le \de\ep$, and by Markov's inequality, $\E \TV(\wh \mu, \mun{N})\le \ep$ with probability at least $1-\de$. 

   Finally, note that the time and query complexity to sample each coordinate is $2^{m} = \pf{N\amx}{\ep g}^{O(\amx^2/g^8)}$, and multiplying by $N$ is a negligible factor.
\end{proof}


\section{Conclusion}

For the CREM at high temperature $\be<\bmin$, we gave two efficient algorithms for sampling from the Gibbs measure, based on a Markov chain and sequential sampling procedure. The dependence of the running time is the desired TV error and failure probability is algebraic. Contrary to many sampling results based on Markov chains, we note that geometric convergence does not hold for our Markov chain because the spectral gap is exponentially small. 
This indicates a possible barrier for showing efficient sampling from more complex spin glass models using popular techniques: efficient sampling (with algebraic dependence) may be possible even if a spectral gap or standard functional inequality does not hold. We hope that a complete analysis of the CREM can shed light on the problem of sampling from more complex spin glass models.

Our algorithm assumes access to the intermediate values in the tree $X_u$, $u\in \T_N$. A natural extension is to suppose that the algorithm only has access to $X_u$ for $|u|=N$. We conjecture that this problem can be tackled by taking averages of $X_{wv}$, $|v|=N-n$ as a proxy for $X_w$, $|w|=n$. 

The main question we leave open is what sampling guarantees are possible when $\be>\bmin$. 
For non-concave $A$, \cite{Ho23a} shows that for $\be>\be_G$, no subexponential algorithm can give a sublinear KL divergence guarantee. This leaves a gap between $\be_1$ and $\be_G$ where an efficient algorithm for sublinear KL divergence, but not $\ep$ TV distance, is known. In the case of concave $A$, $\be_G=\iy$, so the region is the entirety of the region $\be>\bmin$. 
It remains open to determine (1) what the threshold is at which constant TV distance is achievable, and (2) what the Pareto frontier is for (super-polynomial) running time and (super-constant) accuracy in KL divergence.
For (1), we note that in contrast to \cite{Ho23a}, our methods rely critically on fluctuation results which only hold for $\be<\be_c$, so we believe that TV distance guarantees are not possible for $\be>\be_c$, even if $\be_G>\be_c$. 
For (2), based on the result relating the trajectory of extremal particles in branching random walks to Brownian excursions \cite{chen2015scaling,chen2019trajectory}, we make the following more precise conjecture.

\begin{conj}
Consider a CREM with concave covariance function $A$. 
    For $\be>\be_c$, $\al\le \rc 2$ and $\ep>0$, there is a $2^{\wt O_\ep(N^\al)}$-time algorithm to, with high probability 
    \begin{enumerate}
        \item (Optimization) find 
        $\wh v$ such that $X_{\wh v}\ge \OPT - \ep N^{1-2\al}$, where $\OPT = \max_{|v|=N} X_v$. 
        \item (Sampling) 
        output a sample from a distribution $\wh \mu$ such that 
        $\KL(\wh \mu\|\mun{N})\le \ep N^{1-2\al}$. 
    \end{enumerate}
\end{conj}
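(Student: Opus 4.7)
The plan is to interpolate between the polynomial-time sub-linear-KL algorithm of~\cite{Ho23a} and brute-force search by adapting both algorithms to use a ``lookahead'' of depth $N^\al$. Decompose $[0,N]$ into $N^{1-\al}$ blocks of depth $N^\al$; within each block one can afford to enumerate all $2^{N^\al}$ length-$N^\al$ extensions exactly, giving total runtime $2^{\wt O(N^\al)}$.

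For the optimization part, I would run a block-beam-search: maintain at each block boundary $kN^\al$ a set $S_k \subseteq \{0,1\}^{kN^\al}$ of size $|S_k| = 2^{\wt O(N^\al)}$, and pass from $S_k$ to $S_{k+1}$ by enumerating all length-$N^\al$ extensions of every $v \in S_k$, scoring each by a blended criterion (the current partial value $X_v$ plus a short partition-function lookahead in the spirit of \Cref{s:seq}), and retaining the top entries. For the sampling part, I would run \Cref{a:seq} but with the lookahead depth set to $m = N^\al$ so that $\Ztnv{N}{vx} := \Zhatnv{|vx|+N^\al}{vx}$ is computed \emph{exactly} by enumerating all $2^{N^\al}$ descendants. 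The per-step relative error in the partition-function ratio is then governed by the fluctuations of $\Zhatnv{|vx|+N^\al}{vx}/\Zhatnv{N}{vx}$, which (granting a low-temperature analog of \Cref{t:main-Z}) should decay like $e^{-\Om(N^\al)}$; plugging this into the coupling in \Cref{l:seq-helper} and converting per-step TV to a global KL bound via a chain-rule decomposition should give the target $\ep N^{1-2\al}$.

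The physical justification is the scaling limit of extremal trajectories in concave BRW~\cite{chen2015scaling,chen2019trajectory}: after subtracting the linear envelope of speed $\sqrt{2\ln 2}$, the maximiser traces (a Brownian approximation to) an excursion of height $O(\sqrt N)$ on the rescaled interval $[0,1]$. Resolving this excursion at the $N^{1-\al}$ block boundaries (via the beam / marginal sampling) and optimising within each block exactly (via the brute-force lookahead) should leave a residual error on the order of the sub-block fluctuations of the excursion, which a careful computation balancing beam width against discretisation scale should give as $N^{1-2\al}$. Extending from BRW to general concave $A$ should be a time-change argument as in~\cite{BK04b}, using that the concave CREM is equivalent to an inhomogeneous BRW with branching rate $\wh A'$.

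The main obstacle, by a wide margin, is that the Section~\ref{s:fe} fluctuation toolkit depends crucially on the $L^p$ moment bound of \Cref{l:lim-dist}, which requires $\be<\bmin$ and genuinely fails for $\be>\be_c$: at low temperature $\Zhatn{N}$ is heavy-tailed with only $L^p$-moments for $p< \be_c/\be$, and its concentration properties are governed by the \emph{derivative} martingale rather than the additive one used throughout this paper. A fully rigorous argument would require quantitative, finite-$N$ estimates on extremal Brownian excursions for the CREM in the frozen phase at the precision $N^{1-2\al}$, which to our knowledge are not yet available in the literature. A secondary technical issue is choosing the beam-selection criterion: scoring by $X_v$ alone discards information about the subtree's ``future potential'' (the partition function), while scoring by the partition function requires the very lookahead we are trying to bound; resolving this circularity may require a multi-scale variant in which a coarse lookahead $O(\log N)$ is used for beam selection and the full $N^\al$ lookahead only within each block.
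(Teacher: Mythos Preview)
The statement you are addressing is a \emph{conjecture}, not a theorem: the paper states it in the Conclusion as an open problem and provides no proof. So there is nothing to compare your proposal against. The paper's only justification is the single sentence ``based on the result relating the trajectory of extremal particles in branching random walks to Brownian excursions \cite{chen2015scaling,chen2019trajectory}, we make the following more precise conjecture'' --- the very same physical heuristic you invoke in your third paragraph.

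Your proposal is not a proof and you acknowledge as much: it is a research plan, and a reasonable one. The block-lookahead architecture you describe (enumerate all $2^{N^\al}$ extensions within blocks of depth $N^\al$, stitch together $N^{1-\al}$ blocks) is the natural interpolation between brute force and the polynomial-time sequential sampler, and your identification of the Brownian-excursion scaling as the source of the $N^{1-2\al}$ error scale is exactly the intuition the authors have in mind. You also correctly flag the decisive obstacle: the entire fluctuation machinery of \Cref{s:fe} (the $L^p$ bound of \Cref{l:lim-dist}, the martingale concentration of \Cref{l:ZN}, the change-of-measure in \Cref{l:com-finite}) collapses for $\be>\be_c$, and one would need a low-temperature replacement built on the derivative martingale and quantitative excursion estimates that do not currently exist. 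That is precisely why the authors left this as a conjecture.

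One minor correction: your claim that $\Zhatn{N}$ has $L^p$ moments for $p<\be_c/\be$ in the frozen phase is not quite right for the CREM (that bound is the REM/BRW heuristic); for general concave $A$ the tail behavior is more delicate and depends on $\wh A'(0)$. But this does not affect your main point, which is that the additive martingale is the wrong object at low temperature.
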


\section*{Acknowledgements}

This work grew out of discussions from the Random Theory 2023 workshop. We are grateful for Fu-Hsuan Ho's feedback on the first draft.

\printbibliography

\appendix


\section{Calculations}

\begin{lem}[Gaussian tail bound]
\label{l:gtail}
    For $\xi\sim \cal N(0,1)$, we have
\begin{align}
\label{e:gaussian-tail-upper}
\Pj(\xi\ge u) &\le \rc{\sqrt{2\pi}u}e^{-u^2/2}\le e^{-u^2/2}\\
\label{e:gaussian-tail-lower}
\Pj\pa{\xi \ge t}&\ge \rc{\sqrt{2\pi e^3}t} e^{-\rc 2 t^2}\ge \rc{12t} e^{-\rc 2 t^2}.
    \end{align}
\end{lem}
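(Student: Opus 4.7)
Both inequalities are classical Mills-ratio-type estimates. The plan is to establish each by a direct calculation on the tail integral $\int_t^\infty \phi(x)\,dx$ with $\phi(x) = (2\pi)^{-1/2}e^{-x^2/2}$.

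For the upper bound, the idea is to exploit that $x/u \ge 1$ on the range of integration. Namely,
\[
\Pj(\xi \ge u) \;=\; \int_u^\infty \frac{1}{\sqrt{2\pi}} e^{-x^2/2}\,dx \;\le\; \int_u^\infty \frac{x}{u}\cdot \frac{1}{\sqrt{2\pi}} e^{-x^2/2}\,dx \;=\; \frac{1}{\sqrt{2\pi}\,u}\,e^{-u^2/2},
\]
where the inner integral is evaluated in closed form via the antiderivative $-\phi(x)$. This yields the first inequality in \eqref{e:gaussian-tail-upper}. For the second inequality $(2\pi)^{-1/2}u^{-1}e^{-u^2/2}\le e^{-u^2/2}$, the interesting regime is $u \ge 1/\sqrt{2\pi}$, where it is immediate; for the remaining small-$u$ case one invokes the standard Chernoff bound $\Pj(\xi\ge u) \le e^{-\lambda u}\E[e^{\lambda \xi}] = e^{\lambda^2/2-\lambda u}$ and optimizes at $\lambda = u$ to get $\Pj(\xi\ge u) \le e^{-u^2/2}$ directly. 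Either way, the claimed chain of inequalities holds.

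For the lower bound I would restrict attention to $t \ge 1$ (the regime in which the statement is used in \Cref{c:grem-lb}, via $t = \beta\sqrt{a_i s_i}$, and the regime in which the bound is non-vacuous). Integrating over a window of length $1/t$ starting at $t$,
\[
\Pj(\xi \ge t) \;\ge\; \int_t^{t+1/t} \frac{1}{\sqrt{2\pi}} e^{-x^2/2}\,dx \;\ge\; \frac{1}{\sqrt{2\pi}\,t}\, e^{-(t+1/t)^2/2},
\]
since the exponential is monotonically decreasing. Expanding $(t+1/t)^2/2 = t^2/2 + 1 + 1/(2t^2)$ and using $t\ge 1$ to bound $1/(2t^2)\le 1/2$, the extra factor is at most $e^{3/2}$, so
\[
\Pj(\xi \ge t) \;\ge\; \frac{e^{-3/2}}{\sqrt{2\pi}\,t}\, e^{-t^2/2} \;=\; \frac{1}{\sqrt{2\pi e^3}\,t}\,e^{-t^2/2}.
\]
The final inequality $\tfrac{1}{\sqrt{2\pi e^3}\,t}\ge \tfrac{1}{12t}$ is the numerical bound $\sqrt{2\pi e^3} = e^{3/2}\sqrt{2\pi} < 12$.

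There is no substantive obstacle; the only minor subtlety is the range of validity of the lower bound, which is handled by noting that the paper invokes it only when the argument is bounded below (cf.\ the conditions in \eqref{e:s1}--\eqref{e:s2}), and one may freely assume $t\ge 1$ throughout the lower-bound computation.
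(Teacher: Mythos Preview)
Your proof is correct and essentially follows the standard Mills-ratio arguments that the paper takes as known. The paper's own proof only addresses the rightmost inequality in \eqref{e:gaussian-tail-upper}, i.e., the bound $\Pj(\xi\ge u)\le e^{-u^2/2}$, handling the small-$u$ case via the trivial estimate $\Pj(\xi\ge u)\le \tfrac12 \le e^{-1/(4\pi)}$ rather than your Chernoff argument; the Mills-ratio upper bound and the lower bound \eqref{e:gaussian-tail-lower} are not proved in the paper at all. Your short-interval argument for the lower bound (integrating over $[t,t+1/t]$) and your observation that $t\ge 1$ suffices for the applications are both fine additions.
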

\begin{proof}
For the right side of \eqref{e:gaussian-tail-upper}, note that for $u\ge \rc{\sqrt{2\pi}}$, this holds, and for $0\le u\le \rc{\sqrt{2\pi}}$, $\Pj(\xi\ge u)\le \rc 2 \le e^{-\rc 2 \prc{2\pi}}$.
\end{proof}
\begin{lem}\label{l:eat-blnt}
    Let $a,b,c>0$. If $2b\ge a$ and $t\ge \fc{6b}{a}\ln \fc{2b}a \vee \fc{2}{a}\ln c$, then 
    \[
e^{at - b\ln t} = \fc{e^{at}}{t^b} \ge c.
    \]
\end{lem}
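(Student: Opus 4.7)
The plan is to take logarithms and split the needed inequality into two pieces. Note
\[
\frac{e^{at}}{t^b}\ge c \iff at - b\ln t \ge \ln c,
\]
so it suffices to show $\frac{at}{2}\ge \ln c$ and $\frac{at}{2}\ge b\ln t$, and then add these. The first bound is immediate from the hypothesis $t\ge \frac{2}{a}\ln c$. The second is equivalent to
\[
t \ge \frac{2b}{a}\ln t,
\]
so, setting $s := \frac{2b}{a}\ge 1$, the whole lemma reduces to the following claim: if $s\ge 1$ and $t\ge 3s\ln s$, then $t\ge s\ln t$.

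To prove this claim I would do a brief calculus analysis of $g(t):=t-s\ln t$. One has $g'(t)=1-s/t$, so $g$ is minimized at $t=s$ with value $s(1-\ln s)$. When $1\le s\le e$ this minimum is already non-negative, so $t\ge s\ln t$ holds automatically for every $t>0$ and we are done. When $s>e$, the hypothesis $t\ge 3s\ln s$ places $t$ to the right of $s$ (since $3\ln s>1$), where $g$ is increasing, so it is enough to check $g(3s\ln s)\ge 0$. Expanding,
\[
g(3s\ln s) = 3s\ln s - s\ln(3s\ln s) = s\bigl[2\ln s - \ln 3 - \ln\ln s\bigr],
\]
and for $s\ge e$ one has $\ln\ln s\ge 0$ and $2\ln s\ge 2>\ln 3$, so the bracket is non-negative.

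There is no genuine obstacle here; the only mild subtlety is making sure the case split on $s$ (below vs.\ above $e$) covers everything cleanly, and that the constant $3$ arising from the $6b/a$ in the hypothesis is indeed large enough in the $s>e$ regime. Combining the two halves $\frac{at}{2}\ge \ln c$ and $\frac{at}{2}\ge b\ln t$ then yields $at-b\ln t\ge \ln c$, completing the proof.
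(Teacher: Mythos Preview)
Your approach is essentially the same as the paper's: both split $at$ in half, use $\tfrac{at}{2}\ge \ln c$ from the hypothesis $t\ge \tfrac{2}{a}\ln c$, and then verify $\tfrac{at}{2}-b\ln t\ge 0$ by plugging in the lower bound $t=3s\ln s$ (with $s=2b/a$). The paper finishes this last step via the single inequality $x^2\ge 3\ln x$ for $x>0$, while you do a case split on $s\le e$ versus $s>e$; both routes are fine.

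One minor imprecision in your $s>e$ case: from ``$\ln\ln s\ge 0$ and $2\ln s\ge 2>\ln 3$'' it does \emph{not} immediately follow that the bracket $2\ln s-\ln 3-\ln\ln s$ is non-negative, since you still have to subtract $\ln\ln s$. The clean fix is to note that $h(s):=2\ln s-\ln\ln s$ is increasing for $s\ge e$ (its derivative is $(2\ln s-1)/(s\ln s)>0$) with $h(e)=2$, so $2\ln s-\ln\ln s\ge 2>\ln 3$ for all $s\ge e$, and the bracket is indeed non-negative.
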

\begin{proof}
    We have
    \begin{align*}
        e^{at - b\ln t}
        &= e^{\fc{at}2} e^{\fc{at}2 - b\ln t} \\
        &\ge ce^{\fc{at}2 - b\ln t} .
    \end{align*}
    Now $\fc{at}2 - b\ln t$ achieves maximum when $\fc a2 = \fc{b}t$, i.e., $t=\fc{2b}a$, and is increasing for $t\ge \fc ba$. Thus 
    \begin{align*}
        \fc{at}2 - b\ln t\ge 
        3b\ln \fc {2b}a - b\ln \pa{\fc{6b}{a}\ln \fc ba}
        \ge 2b \ln \fc{b}{a} - b \ln \pa{3\ln \fc ba}
    \end{align*}
    so 
    \begin{align*}
        e^{\fc{at}2 - b\ln t}
        \ge \pa{\pf{b}{a}^2 / \pa{3\ln \fc ba}}^b \ge 1
    \end{align*}
    using the fact that $x^2 \ge 3\ln x$ for $x>0$ (by noting that it attains maximum at $x=\sfc 32$ and $\fc{3}2 \ge \fc 32 \ln \pf 32$). 
\end{proof}

\begin{lem}\label{l:approx-frac}
    If $A,B>0$ and $0<\ep\le \rc 2$ are such that 
    $\ab{\fc{\wh A}{A}-1}\le \ep$ and $\ab{\fc{\wh B}{B}-1}\le \ep$, 
    then for any $p,q>0$,
    \begin{align*}
        \ab{\fc{p\wh A}{p\wh A + q\wh B} - \fc{pA}{pA+qB}}\le 2\ep. 
    \end{align*}
\end{lem}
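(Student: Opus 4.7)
The plan is a direct algebraic manipulation. First I would write $\wh A = A(1+\al)$ and $\wh B = B(1+\be)$ where $|\al|,|\be|\le \ep$, so that the problem is reduced to bounding a rational expression in $pA, qB, \al, \be$.

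Next I would put the two fractions over a common denominator and simplify the numerator. A straightforward calculation gives
\[
\fc{p\wh A}{p\wh A + q\wh B} - \fc{pA}{pA + qB}
= \fc{(pA)(qB)(\al - \be)}{(p\wh A + q\wh B)(pA + qB)},
\]
since the $pA \cdot pA$ and $pA \cdot qB$ cross terms cancel after expansion. Taking absolute values, $|\al - \be|\le 2\ep$ by the triangle inequality.

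The remaining step is to bound $\fc{(pA)(qB)}{(p\wh A + q\wh B)(pA + qB)}$. For the denominator, since $\ep\le \rc 2$ we have $1+\al, 1+\be \ge \rc 2$, hence $p\wh A + q\wh B \ge \rc 2 (pA + qB)$. For the numerator, AM--GM gives $(pA)(qB)\le \rc 4 (pA+qB)^2$. Combining,
\[
\ab{\fc{p\wh A}{p\wh A + q\wh B} - \fc{pA}{pA + qB}}
\le \fc{\rc 4 (pA+qB)^2 \cdot 2\ep}{\rc 2 (pA+qB)^2} = \ep \le 2\ep.
\]

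There is no genuine obstacle here; the only thing to watch is the sign bookkeeping in the cancellation that produces the factor $\al - \be$ in the numerator, and ensuring the lower bound $1+\al, 1+\be \ge \rc 2$ (which uses $\ep\le \rc 2$) so the denominator does not collapse.
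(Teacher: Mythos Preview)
Your proof is correct and follows essentially the same approach as the paper: put the difference over a common denominator, simplify the numerator to isolate the relative errors, then bound numerator and denominator separately using AM--GM and the assumption $\ep\le \rc 2$. Your version is in fact slightly cleaner---you avoid the WLOG reduction to $p=q=1$ and obtain the sharper bound $\ep$ rather than $2\ep$.
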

\begin{proof}
By replacing $p\wh A$ with $\wh A$ and $q\wh B$ with $\wh B$, we may assume without loss of generality that $p=q=1$. 
    We have
    \begin{align*}
        \ab{\fc{\wh A}{\wh A + \wh B} - \fc{A}{A+B}}
        &= \fc{(\wh A - A)B + (B-\wh B) A}{(\wh A + \wh B)(A + B)}= \fc{\fc{\wh A-A}{A} + \fc{B-\wh B}{B}}{\fc{(\wh A + \wh B)(A+B)}{AB}}.
    \end{align*}
    The numerator is at most $2\ep$ in absolute value, and the denominator satisfies
    \begin{align*}
        \fc{(\wh A + \wh B)(A+B)}{AB}
        &\ge (1-\ep) \fc{(A+B)^2}{AB} 
        \ge  \fc{(A+B)^2}{2AB} \ge 1.
    \end{align*}
    The bound follows.
\end{proof}


\renewcommand\nomgroup[1]{%
  \item[\bfseries
  \ifstrequal{#1}{A}{Probability measures}{%
  \ifstrequal{#1}{B}{Random measures}{%
  \ifstrequal{#1}{C}{Partition functions}{}}}%
]}
\nomenclature[A, 01]{\(\Pa\)}{CREM disorder measure, \Cref{d:crem}}
\nomenclature[A, 02]{\(\Patr \)}{$= \Pj_{N-n}^{\bfT_n a}$, CREM disorder measure with translated $a$, \Cref{d:dists}}
\nomenclature[A, 03]{\(\Qatr\)}{Tilted CREM disorder measure, \Cref{d:dists}}
\nomenclature[A, 04]{\(\Pai\)}{Infinite CREM disorder measure, \Cref{d:crem-iy}}

\nomenclature[B, 01]{\(\mun{n}\)}{Gibbs measure on CREM, \Cref{d:crem}}
\nomenclature[B, 02]{\(\munv{n}{v}\)}{Gibbs measure on CREM rooted at $v$, \Cref{d:dists}}
\nomenclature[B, 03]{\(\muNn{n}\)}{Marginal distribution of first $n$ coordinates of $\mun{N}$, \Cref{d:dists}}
\nomenclature[B, 04]{\(\nun{n}\)}{$=\munv{N}{v}$ averaged over $v$, where $v\sim \muNn{n}$, \Cref{d:dists}}

\nomenclature[C, 01]{\(\Zn{n}\)}{Partition function at level $n$, \Cref{d:crem}}
\nomenclature[C, 02]{\(\Znv{n}{v}\)}{Partition function at level $n$, rooted at $v$, \eqref{e:Znv}}
\nomenclature[C, 03]{\(\Zhatn{n} = \Zhatan{n}\)}{Normalized partition function at level $n$, \Cref{d:crem}}
\nomenclature[C, 04]{\(\Zhatnv{n}{v} = \Zhatnv{n}{a,v}\)}{Normalized partition function at level $n$, rooted at $v$, \eqref{e:Zhatnv}}
\nomenclature[C, 05]{\(\ZhatnN{n}{N} = \ZhatanN{n}{N}\)}{$=\Zhatnv{N-n}{\bfT_n a}$ Normalized partition function with translated $a$ at level $N-n$, \Cref{d:crem-iy}}
\nomenclature[C, 07]{\(\Ztn{n}\)}{Estimate for $\Zhatn{N}$ in \Cref{l:seq-helper}}
\nomenclature[C, 08]{\(\Ztnv{n}{v}\)}{Estimate for $\Zhatnv{N}{v}$ in \Cref{a:seq}}

\section{List of notations}
\vspace{-1cm}
\printnomenclature

\label{s:nomen}

\end{document}